\documentclass[12pt,oneside]{amsart}

\usepackage{geometry}
\usepackage[english]{babel}
\usepackage{amscd,amssymb,amsthm,amsmath,hyperref,tikz-cd,stmaryrd,mathrsfs}
\usepackage[matrix,arrow,curve]{xy}
\usetikzlibrary{babel}
\usepackage{cleveref}

\DeclareFontFamily{U}{mathx}{\hyphenchar\font45}
\DeclareFontShape{U}{mathx}{m}{n}{
      <5> <6> <7> <8> <9> <10>
      <10.95> <12> <14.4> <17.28> <20.74> <24.88>
      mathx10
      }{}
\DeclareSymbolFont{mathx}{U}{mathx}{m}{n}
\DeclareFontSubstitution{U}{mathx}{m}{n}
\DeclareMathAccent{\widebar}{0}{mathx}{"73}

\tikzset{%
  symbol/.style={
    draw=none,
    every to/.append style={
      edge node={node [sloped, allow upside down, auto=false]{$#1$}}
    },
  },
}

\newtheorem{theorem}[subsection]{Theorem}
\newtheorem{lemma}[subsection]{Lemma}
\newtheorem{corollary}[subsection]{Corollary}
\newtheorem{proposition}[subsection]{Proposition}

\theoremstyle{definition}
\newtheorem{definition}[subsection]{Definition}
\newtheorem{remark}[subsection]{Remark}

\newtheorem{notation}[subsection]{Notation}

\newcommand{\CC}{\mathbb{C}}

\newcommand{\PP}{\mathbb{P}}

\newcommand{\ZZ}{\mathbb{Z}}
\newcommand{\GG}{\mathbb{G}}

\newcommand{\AAA}{\mathbb{A}}
\newcommand{\OOO}{\mathcal{O}}
\newcommand{\NNN}{\mathcal{N}}

\newcommand{\Pic}{\operatorname{Pic}}
\newcommand{\Sing}{\operatorname{Sing}}

\newcommand{\Bl}{\operatorname{Bl}}
\newcommand{\G}{\operatorname{G}}
\newcommand{\Aut}{\operatorname{Aut}}

\newcommand{\GL}{\operatorname{GL}}

\title{Cylinders in Fano threefolds of genus 9 and 10}
\author{Nikita Virin}
\address{Steklov Mathematical Institute of Russian Academy of Sciences}
\email{virinnikita@gmail.com}
\subjclass{Primary 14J45; Secondary 14N20, 14R20.}

\begin{document}

\begin{abstract}
   We prove that any Fano threefold of genus $9$ and $10$ contains a cylinder, i.e. an open subset isomorphic to the product of a quasiprojective variety and the affine line. Moreover, we show that any Fano threefold of genus $10$ has a point such that the Hilbert scheme of lines through the point has length three.
\end{abstract}

\keywords{
    Fano variety, cylinder.
}

\thanks{This work was performed at the Steklov International Mathematical Center and supported by the Ministry of Science and Higher Education of the Russian Federation (agreement no. 075-15-2025-303).}

\maketitle

\section{Introduction}

We assume that all varieties are defined over the complex number field $\CC$. We recall that a \emph{cylinder} is an algebraic variety $U$ that is isomorphic to $Z\times\AAA^1,$ where $Z$ is a quasiprojective variety. A variety $X$ is called \emph{cylindrical} if it contains a Zariski open subvariety $U$ that  is a cylinder. We refer to \cite{che2} for a survey about cylinders in Fano varieties.

Interest in cylindrical varieties originates from affine geometry. Let $X$ be a projective variety and let $X\hookrightarrow \PP^N$ be its embedding into a projective space. In \cite{kis3} the following question was raised:

When does an affine cone $\operatorname{AffCone}(X)\subset\AAA^{N+1}$ over $X$ admit an effective action of the additive group $\GG_a$? 

Note that if the affine cone $\operatorname{AffCone}(X)$ admits such an action and $\dim(X)\geqslant 1$, then the automorphism group $\Aut(\operatorname{AffCone}(X))$ is infinite-dimensional (see \cite{arz}).

One necessary and sufficient condition is the following.

\begin{theorem}[{\cite[Corollary 0.4]{kis}}]
    \label{ac}
    Suppose that $X\subset\PP^N$ is a smooth projective variety with $\Pic(X)=\ZZ$. Then $\operatorname{AffCone}(X)$ admits an effective $\GG_a$-action if and only if $X$ is cylindrical.
\end{theorem}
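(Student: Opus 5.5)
This is the Kishimoto–Prokhorov–Zaidenberg criterion. I would prove it by translating both conditions into statements about the polarization $H=\OOO_X(1)$ and $H$-polar cylinders, and then using $\Pic(X)=\ZZ$ to remove the polarity condition. Throughout, $\operatorname{AffCone}(X)$ is understood via its normalization, the generalized cone $\operatorname{Spec}\bigoplus_{m\geqslant0}H^0(X,mH)$; when $X$ is projectively normal the two coincide.

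The general criterion I would establish first is the following. $\operatorname{AffCone}(X)$ admits an effective $\GG_a$-action if and only if $X$ contains an $H$-polar cylinder. Here a cylinder $U=X\setminus\operatorname{Supp}(D)\cong Z\times\AAA^1$ is \emph{$H$-polar} if $D$ is an effective $\QQ$-divisor with $D\sim_{\QQ}H$.

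For the direction ($\Leftarrow$):
\begin{enumerate}
\item Choose $s\in H^0(X,mH)$ with $\operatorname{div}(s)=mD$. Then $U$ is the principal affine open set $\{s\neq0\}$, so $\OOO(U)$ is the degree-zero part of the localized graded ring $A_s$, where $A=\bigoplus_m H^0(X,mH)$.
\item The cylinder structure gives a locally nilpotent derivation $\partial=\partial/\partial t$ on $\OOO(U)=\OOO(Z)[t]$.
\item Extend $\partial$ homogeneously to $A_s$ by declaring $\partial(s)=0$; this is possible because $A_s=\OOO(U)[s,s^{-1}]$.
\item Each element of a finite generating set of $A$ acquires a pole along $D$ under $\partial$. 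Multiply $\partial$ by a large power of $s$, which lies in its kernel, so that the result preserves $A$.
\item The result is a nonzero homogeneous locally nilpotent derivation of $A$, which integrates to the desired effective $\GG_a$-action.
\end{enumerate}

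For the direction ($\Rightarrow$):
\begin{enumerate}
\item Replace a given effective $\GG_a$-action by a homogeneous one. Take the highest-degree component of the locally nilpotent derivation for the $\GG_m$-grading; this is again locally nilpotent and nonzero.
\item The new action then commutes with the $\GG_m$-action up to the grading, and descends to a rational $\GG_a$-action on $X$.
\item Choose a nonzero homogeneous kernel element $s$ of degree $m$.
\item The principal open set $\{s\neq0\}\subset X$ carries a locally nilpotent derivation of its ring of functions having a slice after localization. By a standard slice argument it therefore contains a principal cylinder $Z\times\AAA^1$.
\item Its complement is $\operatorname{Supp}(\operatorname{div}(s))$, possibly enlarged by the zero set of a further kernel element. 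The resulting boundary divisor is $\QQ$-linearly equivalent to a multiple of $H$, so the cylinder is $H$-polar.
\end{enumerate}

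With this criterion in hand, the corollary follows quickly under $\Pic(X)=\ZZ$.
\begin{enumerate}
\item If $X$ is $H$-polar cylindrical, it is cylindrical by definition.
\item Conversely, let $U=X\setminus\operatorname{Supp}(B)\cong Z\times\AAA^1$. We may enlarge the boundary by shrinking $Z$ so that $Z$ is affine, and hence $U$ is affine.
\item Since $U$ is affine and $X$ is projective, the complement $X\setminus U$ has pure codimension one. So it is the support of an effective divisor $B=\sum B_i$ with $B_i\neq0$.
\item Because $\Pic(X)=\ZZ$ with ample generator, each prime component satisfies $B_i\sim a_iH_0$ with $a_i>0$, where $H_0$ is the ample generator and $H=dH_0$.
\item Then $D=\frac{d}{\sum a_i}\sum B_i$ is an effective $\QQ$-divisor, $D\sim_{\QQ}H$, and $\operatorname{Supp}(D)=X\setminus U$. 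Hence $U$ is $H$-polar.
\end{enumerate}

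The main obstacle is the ($\Rightarrow$) direction of the general criterion. The delicate points there are the homogenization and descent of the $\GG_a$-action, and extracting an honest cylinder whose boundary is exactly a divisor in $|mH|$. I would handle the latter by using the kernel of the locally nilpotent derivation, which has transcendence degree $\dim A-1$, together with a local slice $r$ satisfying $\partial r=s'$ with $s'$ in the kernel and homogeneous. Localizing at $s\,s'$ then gives a cylinder with polar boundary, by the slice theorem $A_{s'}=(\ker\partial)_{s'}[r]$.
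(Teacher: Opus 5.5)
\textbf{Overall.} The paper gives no proof of this statement. It quotes Corollary~0.4 of Kishimoto--Prokhorov--Zaidenberg, which is deduced from their $H$-polar cylinder criterion exactly as in your last part, and your $\Pic(X)=\ZZ$ reduction is fine. The problems are in your proof of the criterion itself.

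\textbf{The gap in ($\Leftarrow$).} Step~3 fails: the identity $A_s=\OOO(U)[s,s^{-1}]$ holds only when $m=1$.
\begin{itemize}
\item In general $\OOO(U)[s^{\pm1}]=\bigoplus_j(A_s)_{jm}$ is just a Veronese subring. By contrast, $A_s\supset A$ has nonzero components in every degree; already $H^0(X,\OOO_X(1))$ sits in degree~$1$.
\item Geometrically, $\operatorname{Spec}A_s\to U$ is the $\GG_m$-torsor of $\OOO_X(-1)|_U$. You only know its class in $\Pic(U)$ is $m$-torsion, and $\operatorname{Spec}A_s\to U\times\GG_m$ is a $\mu_m$-cover.
\item $m>1$ is what your own reduction produces. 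With $d=1$ (as for $H=-K_X$ in this paper) you need $\sum a_i\mid m$, and $\sum a_i>1$ unless the boundary is a single hyperplane section.
\end{itemize}
So ``declaring $\partial(s)=0$'' does not define a derivation of $A_s$. This is exactly the non-formal point of the criterion.

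The missing input is homotopy invariance of the Picard group. Since $Z$ is smooth, $\Pic(Z\times\AAA^1)=\operatorname{pr}^*\Pic(Z)$, so $\OOO_X(-1)|_U\cong\operatorname{pr}^*\mathcal{M}$ for some $\mathcal{M}\in\Pic(Z)$. Hence $\operatorname{Spec}A_s\cong\widehat{Z}\times\AAA^1$ $\GG_m$-equivariantly, where $\widehat Z\to Z$ is the $\GG_m$-torsor of $\mathcal M$. Then $\partial/\partial t$ is a degree-zero locally nilpotent derivation of $A_s$. It kills the unit $s$, since units lie in the kernel of any locally nilpotent derivation, and your step~4 goes through. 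Alternatively, $A_s$ is finite \'{e}tale over $\OOO(U)[s^{\pm1}]$, so $\partial$ extends uniquely, and the extension is locally nilpotent by Vasconcelos' theorem.

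\textbf{The problem in ($\Rightarrow$).} Step~2 is false as stated. If $\deg\partial\neq0$, the $\GG_a$-action is only normalized by $\GG_m$ and does not descend to $X=\operatorname{Proj}A$. Rescaling $\partial$ to degree zero by a homogeneous element of $\operatorname{Frac}(\ker\partial)$ works only if $\deg\partial$ lies in the degree group $G\subset\ZZ$ of $K=\ker\partial$. So you must use the slice on the cone and extract the degree-zero part by hand; ``localizing at $ss'$'' does not do this. Here is how:
\begin{itemize}
\item Take a homogeneous local slice $r$. Let $h$ be $\partial r$ times homogeneous kernel elements whose degrees generate $G$, so that $K_h$ has units in every degree of $G$.
\item From $A_h=K_h[r]$ you get $(A_h)_0=\bigoplus_j(K_h)_{-j\deg r}\,r^j=(K_h)_0[u r^n]$. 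Here $n$ is the order of $\deg r$ in $\ZZ/G$ and $u\in K_h^\times$ has degree $-n\deg r$.
\item Hence $X_h\cong\operatorname{Spec}(K_h)_0\times\AAA^1$. Its boundary is $\operatorname{Supp}\operatorname{div}(h)$ with $h\in H^0(X,\OOO_X(\deg h))$, so this cylinder is $H$-polar.
\end{itemize}

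\textbf{A smaller point.} The statement concerns the actual cone in $\AAA^{N+1}$, not its normalization. For ($\Leftarrow$) without projective normality, you must push the derivation from $\bigoplus_m H^0(X,mH)$ down to the homogeneous coordinate ring. Multiply it by a high power of a homogeneous kernel element so that its image lands in degrees where the two rings coincide. It stays nonzero because the kernel is algebraically closed in the ring.
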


    There is an analogous necessary and sufficient condition for varieties with an arbitrary Picard group (see \cite[Corollary 3.2]{kis3}).

Any cylindrical variety is uniruled, hence its Kodaira dimension is negative (see \cite[Corollary IV.1.11]{kol}). Therefore, any smooth cylindrical variety with $\operatorname{rk}\Pic(X)=1$ is a Fano variety.

It is known that cylindrical Fano threefolds are rational (see \cite{kis}). But there are no known examples of non-cylindrical smooth rational Fano varieties of Picard rank one.

Let us recall that the \emph{genus} of a Fano threefold with $\Pic(X)= \ZZ[-K_X]$ is the number $$g:=1-\frac{K_X^3}{2}.$$

It is known that this number is an integer and takes values in the interval $2\leqslant g\leqslant 12$, and $g\neq 11$ (see e.g. \cite[Theorem 4.3.3]{isk2}).

Assume that $X$ is a smooth Fano threefold of genus $g$ with $\Pic(X)= \ZZ[-K_X]$. 
Let us recall known facts about the rationality and the cylindricity of such varieties:
\begin{enumerate}
    \item 
If $g\in\{2, 3, 4, 5, 6, 8\}$, then a general variety $X$ is not rational, in particular, is not cylindrical. More precisely:
\begin{itemize}
    \item If $g\in\{2, 3, 5, 8\}$, then any variety $X$ is not rational (see \cite{bea}, \cite{cle}, \cite{isk6}, \cite{isk4}, \cite{isk5}).
    \item If $g\in\{4, 6\}$, then a general variety $X$ is not rational (see \cite{bea}, \cite{isk5}, \cite{tyu}).
\end{itemize}
\item If $g\in\{7, 9, 10, 12\}$, then any variety $X$ is rational (see \cite{isk2}).
\item
If $g=12$, then any variety $X$ is cylindrical (see \cite[Proposition 5.2]{kis2}).
\end{enumerate}

There remains a question on the cylindricity of Fano threefolds of genus $7$, $9$, and $10$ with $\Pic(X)= \ZZ[-K_X]$. It has been known that in the class of Fano threefolds of genus $9$ and $10$ with $\Pic(X)= \ZZ[-K_X]$ there are cylindrical varieties. More precisely, in \cite{kis} the following theorem is proved.

\begin{theorem}[{\cite[Theorem 0.1]{kis}}]
\label{st}
Suppose that $X$ is a smooth Fano threefold of genus $g=9$ or $g=10$ with $\Pic(X)=\ZZ[-K_X]$. If the Hilbert scheme of lines $\mathrm{F}_1(X)$ is singular, then $X$ is cylindrical. All such Fano varieties form a codimension one family in the corresponding moduli space.
\end{theorem}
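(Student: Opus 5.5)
We reconstruct the argument of \cite{kis} for Theorem~\ref{st}. The first part, cylindricity when $\mathrm{F}_1(X)$ is singular, comes from a point of $X$ through which several lines pass. The second part, the codimension count, comes from a deformation argument.

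\textbf{Step 1: from a singular point of $\mathrm{F}_1(X)$ to a special point of $X$.} Recall that $\mathrm{F}_1(X)$ is a curve. A line $\ell\subset X$ is a singular point of $\mathrm{F}_1(X)$ exactly when its normal bundle is $\NNN_{\ell/X}\cong\OOO(1)\oplus\OOO(-2)$ rather than $\OOO\oplus\OOO(-1)$. For $g=9,10$, a line of type $(1,-2)$ produces a point $P\in\ell$ at which the lines through $P$ form a non-reduced scheme. In the double projection from $\ell$, the exceptional divisor then meets the flopping locus in a degenerate way. We will therefore use a point $P$ such that the Hilbert scheme of lines through $P$ is non-reduced, or has several components meeting there.

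\textbf{Step 2: projection from $P$ and the cylinder.} We would project $X\subset\PP^{g+1}$ from the tangent space, or from $P$, and blow up. For $g=9$ and $g=10$ this gives a birational map $X\dashrightarrow Y$ onto a simpler variety $Y$. Candidates are $\PP^3$, a quadric $Q\subset\PP^4$, or $V_5$, depending on the genus and the chosen construction, with Iskovskikh's double projection from a line as the model. The exceptional locus should be the cone $R_P$ swept out by the lines through $P$, together with a hyperplane section $H_P$ singular at $P$. The target $Y$ minus a suitable divisor $D$ (a hyperplane section of a quadric, or a cubic cone) should be a known cylinder $Z\times\AAA^1$, for instance $\PP^3\setminus(\text{plane})\cong\AAA^3$, or the complement in $Q$ of a tangent hyperplane section.

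\textbf{Step 3: the key condition.} We need the image of the indeterminacy locus of $X\dashrightarrow Y$ to lie inside $D$. Then $X\setminus(H_P\cup R_P)\cong Y\setminus D$, and the cylinder transfers to $X$. This containment is exactly where the special configuration of lines at $P$, coming from the singularity of $\mathrm{F}_1(X)$, is used. At a general point, the curve in $Y$ blown up by the inverse map is not contained in a single cylindrical divisor complement.

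\textbf{Step 4: codimension one.} Singularity of $\mathrm{F}_1(X)$ is cut out by the single condition that $\NNN_{\ell/X}$ jumps to $\OOO(1)\oplus\OOO(-2)$ for some $\ell$. This is one condition on the family of lines, which moves in a one-dimensional family, against the $h^1(\NNN)$-type obstruction. We would make this precise by a dimension count in the Mukai model of $X$ as a linear section of a homogeneous variety: $\mathrm{LGr}(3,6)$ for $g=9$ and $\G_2$-Grassmannian sections for $g=10$. One has to show that the locus of $X$ with singular $\mathrm{F}_1(X)$ is nonempty and proper, hence a divisor.

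The main obstacle is Step 3: verifying that the base curve of the inverse map lies in the boundary divisor. For this I would write explicit local coordinates at $P$ using the Mukai model and compute the tangent cone of $X$ at $P$.
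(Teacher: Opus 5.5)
Your proposal has a genuine gap. Steps 2--3 are a placeholder rather than an argument, and Step 3, which you flag as the main obstacle, is in fact the whole content of the cylindricity claim. You never fix the target $Y$ or the boundary divisor $D$, and nothing explains how the configuration of lines at $P$ would force the required containment.

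Several parts of the sketched geometry are also off.
\begin{itemize}
\item Step 1 does not single out a point. If $\NNN_{\ell/X}\cong\OOO_{\PP^1}(1)\oplus\OOO_{\PP^1}(-2)$, then $H^0(\NNN_{\ell/X}(-1))\neq 0$, so $\mathrm{F}_1(X,P)$ is non-reduced at $[\ell]$ for \emph{every} $P\in\ell$.
\item At most three lines pass through any point of $X$. So the ``cone $R_P$'' swept out by the lines through $P$ is a curve, not a divisor that could be exceptional.
\item Step 3 conflates two different conditions. A link of this kind only gives $X\setminus D_X\cong Y\setminus F_Y$, where $F_Y$ is the image in $Y$ of the whole exceptional divisor over $P$. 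It is this surface, which contains the base curve of $Y\dashrightarrow X$, that must lie in a divisor with cylindrical complement, not just that curve. You give no reason why it should.
\end{itemize}

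The missing idea is that you never need to leave the line. Take the double projection from $\ell$ itself (Theorem~\ref{ic}).
\begin{itemize}
\item By Theorem~\ref{thl}(3), $[\ell]$ is a singular point of $\mathrm{F}_1(X)$ exactly when $\NNN_{\ell/X}\cong\OOO_{\PP^1}(1)\oplus\OOO_{\PP^1}(-2)$.
\item By Theorem~\ref{lot2}, this is equivalent to $F_\ell\subset W$ being non-normal, i.e.\ ruled and singular along a line (Proposition~\ref{prd}). This is the ``degenerate'' behaviour you noticed in Step~1: the negative section of $\widehat F$ is one of the curves flopped by $\chi$ (Remark~\ref{rm3}).
\item Corollary~\ref{cc} gives $X\setminus D\cong W\setminus F_\ell$.
\item The complement of a singular cubic surface in $\PP^3$, resp.\ of a singular intersection of the smooth quadric $W\subset\PP^4$ with another quadric, is cylindrical. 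One projects from a singular point of the surface (Propositions~\ref{ccc} and~\ref{l2}; \cite[Theorem 3.1]{kis} handles precisely the non-normal case).
\end{itemize}
That is the entire first assertion.

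The codimension-one assertion is only quoted in this paper, but your Step 4 has its own logical gap. ``Nonempty and proper, hence a divisor'' does not follow without a bound on the codimension. One way to get that bound:
\begin{itemize}
\item On the space of pairs $(X,\ell)$, of dimension $\dim\mathcal{M}+1$, the condition $h^0(\NNN_{\ell/X}(-1))\geqslant 1$ is a degeneracy locus of a map of bundles whose ranks differ by one. Hence it has codimension at most $2$.
\item Each $X$ carries only finitely many such lines, by Theorem~\ref{thl}(3) and (5). So the image of this locus in moduli has codimension at most $1$.
\item Properness comes from Theorem~\ref{lot1}, and nonemptiness still requires an example.
\end{itemize}
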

In the present paper, we prove the cylindricity of an arbitrary Fano threefold of genus $9$ or $10$ with $\Pic(X)= \ZZ[-K_X]$. Namely, we prove the following theorem:

\begin{theorem}
\label{mt}
Let $X$ be a smooth Fano threefold of genus $9$ or $10$ with $\Pic(X)=\ZZ[-K_X]$. Then $X$ is cylindrical.
\end{theorem}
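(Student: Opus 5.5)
The plan is to follow the strategy of \cite{kis}, where cylinders are built from a point through which several lines pass, but to remove the hypothesis that $\mathrm{F}_1(X)$ is singular. Recall that $X$ is anticanonically embedded, $X\subset\PP^{g+1}$, so lines are curves $\ell$ with $-K_X\cdot\ell=1$. Through a general point of $X$ no line passes. Through a point $P$ at most three lines pass, counted with multiplicity, since $\mathrm{F}_1(X)$ meets the "lines through $P$" locus with length at most three (this is the local structure of the Hilbert scheme of lines, as used in \cite{kis}). The key geometric input I aim for is the following. There is a point $P\in X$ such that the Hilbert scheme of lines through $P$ has length three. In the singular case this is what \cite{kis} exploits. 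For genus $10$ it is the statement announced in the abstract, and for genus $9$ I would prove the analogous claim, or an adequate substitute, with the same method.

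The first step establishes the existence of such a point. The surface $R\subset X$ swept out by lines is a member of $|-kK_X|$ for suitable $k$ ($k=2$ in genus $9$ and $10$), and the curve $\mathrm{F}_1(X)$ is known explicitly: in genus $10$ it is a smooth curve of genus $10$ for general $X$, and in genus $9$ of genus $3$. Consider the correspondence $\{(\ell,\ell') : \ell\cap\ell'\neq\emptyset\}$ on $\mathrm{F}_1(X)\times\mathrm{F}_1(X)$. This defines a divisor, or an incidence correspondence $D$, on $\mathrm{F}_1(X)$. Points lying on three lines correspond to triangles $\ell_1,\ell_2,\ell_3$ through a common point, that is, to triple points of $R$.

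I would count these via an intersection or degeneracy locus computation. One realization is the projection from a line $\ell$. For genus $10$ one can also use the model of $X$ as a linear section of the $G_2$-Grassmannian, and for genus $9$ the Lagrangian Grassmannian $\mathrm{LG}(3,6)$. The aim is to show that the expected number of triple points is positive, and that this number is computed by a Chern class which cannot vanish. Positivity then forces existence on every smooth $X$ by a degeneration argument, since the locus is closed in the family. I expect this to be the main obstacle: showing existence for \emph{every} $X$, not just a general one, and controlling the degenerate cases where lines coincide. Those degenerate cases are exactly where $\mathrm{F}_1(X)$ is singular, and they are covered by Theorem~\ref{st}.

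The second step builds the cylinder. Take $P$ with three lines $\ell_1,\ell_2,\ell_3$ through it, counted with multiplicity, and project from the tangent space, or blow up $P$ and the lines. Following \cite{kis}, the double projection from $P$ should give a birational map $X\dashrightarrow Y$ onto a simpler variety: $\PP^3$, a quadric, or a $\PP^1$-bundle. Under this map the exceptional locus, a hyperplane section $H$ singular at $P$ which contains the three lines, is contracted. In particular $X\setminus H$ is identified with the complement in $Y$ of a divisor $Y\setminus D$ that contains a cylinder. Concretely, $D$ would be a cone or a union of planes, whose complement is $\AAA^1\times Z$. Verifying that the image divisor has this form uses that all lines through $P$ are contained in $H$; length exactly three is what makes the contracted divisor and its image sufficiently degenerate. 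Combining this with Theorem~\ref{st} for the singular case gives cylindricity of every $X$ of genus $9$ and $10$.
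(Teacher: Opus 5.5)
Your second step is where the proof fails. Nothing supports the claims about the double projection from a point $P$ on three lines (equivalently, projection from $T_PX$). You assert that it is a birational map onto $\PP^3$, a quadric or a $\PP^1$-bundle, that it contracts a hyperplane section $H$ singular at $P$, and that the image of $H$ is a cone or a union of planes. None of this is a known link, and ``length exactly three makes the image sufficiently degenerate'' is not an argument. Yet this is exactly where the cylinder has to come from.

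You also misattribute the mechanism to \cite{kis}. There the cylinder comes from the double projection from a \emph{line} $L$ with $\NNN_{L/X}\cong\OOO_{\PP^1}(1)\oplus\OOO_{\PP^1}(-2)$, for which $F_L$ is non-normal. The missing idea is to use that same link for an arbitrary line, and to relate concurrent lines to singularities of $F_L$:
\begin{itemize}
\item For every line $L$ one has $X\setminus D\cong W\setminus F_L$ (Corollary~\ref{cc}).
\item If $F_L$ is smooth, then $\widetilde F\cong F_L$ is a smooth del Pezzo surface. Every flopping curve is a $(-1)$-curve on $\widetilde F$ with normal bundle $\OOO_{\PP^1}(-1)^{\oplus 2}$ in $\widetilde X$, so the flop is Atiyah--Kulikov. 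Hence $\widetilde F\to\widehat F$ blows up $14-g$ points on distinct fibres of $\widehat F\to L$, so exactly $14-g$ lines meet $L$, at distinct points (Proposition~\ref{rm}).
\item Consequently, if three distinct lines pass through $P$ and $L$ is one of them, $F_L$ is singular. The non-reduced case gives a line with $\NNN_{L/X}\cong\OOO_{\PP^1}(1)\oplus\OOO_{\PP^1}(-2)$, and again $F_L$ is singular (Theorem~\ref{lot2}).
\item The cylinder is then built inside $W\setminus F_L$ by projecting from a singular point of $F_L$. Complements of singular cubic surfaces in $\PP^3$, and of singular surfaces $W\cap Q$ in a smooth quadric $W$, are cylindrical (Propositions~\ref{ccc} and~\ref{l2}).
\end{itemize}

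Your first step also has a hole in genus $9$. The genus-$10$ count works because $V_{18}$ is the adjoint variety of $\mathrm{G}_2$. Its contact subbundle $\mathcal E\subset TV_{18}$ has $TV_{18}/\mathcal E\cong\OOO(1)$, and a point $Q$ where $TX\to\OOO_X(1)$ vanishes has $\mathrm{F}_1(X,Q)$ of length three. Such a point exists on every $X$, with no degeneration argument: otherwise $T^*X/\OOO_X(-1)$ would be a rank-two bundle with $c_3=c_2(X)\cdot H=24\neq0$.

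$\operatorname{LGr}(3,6)$ has no such structure, and ``the same method'' is never carried out. The paper in fact never produces three concurrent lines on every genus-$9$ threefold. For smooth $\mathrm{F}_1(X)$ it proves only a dichotomy: either three distinct lines are concurrent, or some line meets fewer than five lines. The argument runs as follows. Otherwise the incidence curve $C$ is \'etale of degree $5$ over $\mathrm{F}_1(X)$, so $\deg K_C=5\cdot 32=160$; note that $p_a(\mathrm{F}_1(X))=17$, not $3$. On the other hand, adjunction on the blow-up of $\Sing(R)$, with $R\sim 4H$ (not $2H$), shows that $3$ divides $\deg K_C$, a contradiction.

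The second alternative of that dichotomy is useful only through the criterion ``fewer than $14-g$ lines meet $L$ implies $F_L$ singular''. That criterion is exactly the ingredient absent from your plan.
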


If $X$ is a smooth Fano threefold of genus $9$ or $10$ with $\Pic(X)=\ZZ[-K_X]$, then $\Aut(X)$ is finite (see \cite{pr1}), in particular, $X$ does not admit effective $\GG_a$-actions, and therefore $\operatorname{AffCone}(X)$ does not admit linear $\GG_a$-actions.
Then, using Theorems \ref{ac} and \ref{mt}, we obtain the following corollary.

\begin{corollary}
Suppose that $X$ is a smooth Fano threefold of genus $g=9$ or $g=10$ with $\Pic(X)=\ZZ[-K_X]$. Then the affine cone $\operatorname{AffCone}(X)$ admits an effective $\GG_a$-action, and any such action is non-linear. In particular, the automorphism group $\Aut(\operatorname{AffCone}(X))$ of this cone is infinite-dimensional.
\end{corollary}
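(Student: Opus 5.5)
The corollary follows formally from Theorem~\ref{mt} combined with Theorem~\ref{ac}, the finiteness of $\Aut(X)$, and the result of \cite{arz}. We proceed in three steps.

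\textbf{Step 1: existence of an effective action.} Fix a smooth Fano threefold $X$ of genus $9$ or $10$ with $\Pic(X)=\ZZ[-K_X]$, and an embedding $X\hookrightarrow\PP^N$; the natural choice is the anticanonical embedding, for which $\operatorname{AffCone}(X)$ is understood. Then $X$ is smooth and projective with $\Pic(X)=\ZZ$. By Theorem~\ref{mt}, $X$ is cylindrical. Theorem~\ref{ac} then says that $\operatorname{AffCone}(X)$ admits an effective $\GG_a$-action. One point needs care: Theorem~\ref{ac} as quoted is stated for an arbitrary embedding. In \cite{kis} the cylinder is required to be $H$-polar, i.e. the complement of an effective divisor in $|dH|$. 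Since $\Pic(X)=\ZZ$ and the complement of a cylinder in $X$ is a divisor, any cylinder is automatically polar for the ample generator. So nothing beyond the cited statement is needed.

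\textbf{Step 2: non-linearity.} Suppose some effective $\GG_a$-action on $\operatorname{AffCone}(X)\subset\AAA^{N+1}$ is linear, i.e. given by a unipotent subgroup of $\GL_{N+1}$ preserving the cone. Such an action commutes with the scalar $\GG_m$-action, because it consists of linear maps. It therefore descends to an action of $\GG_a$ on $\PP^N$ preserving $X$. This descended action is nontrivial. Indeed, if every element acted on the cone by scalars, then, since a unipotent scalar matrix is the identity, the action would be trivial, contradicting effectiveness. Hence $\GG_a\to\Aut(X)$ has nontrivial, hence infinite, image. This contradicts the finiteness of $\Aut(X)$ from \cite{pr1}. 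So every effective action is non-linear.

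\textbf{Step 3: infinite-dimensionality.} Since $\dim X=3\geqslant1$ and the cone admits an effective $\GG_a$-action, the result of \cite{arz} quoted in the introduction shows that $\Aut(\operatorname{AffCone}(X))$ is infinite-dimensional.

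The only step requiring any argument is Step~2, specifically the check that a linear effective action descends to a nontrivial action on $X$. The argument above settles this.
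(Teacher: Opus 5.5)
Your proposal is correct and follows the paper's own argument: Theorem \ref{mt} together with Theorem \ref{ac} gives the effective action, the finiteness of $\Aut(X)$ from \cite{pr1} rules out linear actions, and \cite{arz} gives infinite-dimensionality. One small aside: your remark that the complement of any cylinder is a divisor holds only after shrinking $Z$ to an affine open subset, so that the cylinder becomes affine and its complement in the smooth $X$ has pure codimension one; as you note, though, this remark is not needed, since Theorem \ref{ac} is already stated in terms of cylindricity.
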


 To prove Theorem \ref{mt} we use the double projection from a line $L$ to show that a smooth Fano threefold of genus $9$ (respectively, $10$) with $\operatorname{Pic}(X)=\ZZ[-K_X]$ contains an open subset $U\subset X$ such that $U$ is isomorphic to the complement of a cubic surface $F_L\subset\PP^3$ (respectively, the complement of a quartic surface $F_L$ in a smooth quadric $Q\subset\PP^4$). Next, in Proposition \ref{ccc} and Proposition \ref{l2} we show that if the surface $F_L$ is singular, then the complement contains a cylinder, so $X$ contains a cylinder as well.

 On the other hand, in Corollary \ref{mp} we provide sufficient conditions for $F_L$ to be singular, i.e. we prove that if
 \begin{itemize}
 \item
 either the normal bundle $\NNN_{L/X}$ is isomorphic to $\OOO_{\PP^1}(1)\oplus\OOO_{\PP^1}(-2)$,
 \item
 or two other lines intersect $L$ at the same point,
 \item
 or the number of lines intersecting $L$ is less than for a general line,
 \end{itemize}
 then the surface $F_L$ is singular and, as we explained above, $X$ contains a cylinder.

 Finally, we show that for any Fano threefold $X$ of genus $9$ or $10$ there is a line $L$ satisfying one of the above properties.

 In the case of genus $9$, we argue by contradiction. Assuming that no lines on $X$ have either of the above properties, we prove in Proposition \ref{pl0} that the natural morphism from the curve $C$ parametrising intersecting pairs of lines on $X$ to the Hilbert scheme of lines on $X$ is \'{e}tale of degree $5$ and conclude that $\deg K_C = 160$. On the other hand, we check that the singular locus of the surface $R \subset X$ swept by the lines is covered by $C$ and this cover is \'{e}tale of degree $2$, and using the adjunction formula for the normalisation of $R$ we observe that $\deg K_C$ is divisible by $3$, thus arriving at a contradiction.

 In the case of genus $10$, we use a more straightforward argument proving the following theorem.

\begin{theorem}
\label{lof}
    Let $X$ be a smooth Fano threefold of genus $g=10$ with $\Pic(X)=\ZZ[-K_X]$. Then there is a point $P\in X$ such that the length of the Hilbert scheme $\mathrm{F}_1(X,P)$ of lines on~$X$ passing through~$P$ is equal to $3$. Moreover, if the Hilbert scheme of lines $\mathrm{F}_1(X)$ is smooth, then there are three distinct lines on $X$ meeting at the point $P$.
\end{theorem}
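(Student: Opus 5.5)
We argue by contradiction on the incidence geometry of lines, using the surface $R\subset X$ swept out by lines and the curve $C$ parametrising incident pairs of lines. The first step collects standard facts for genus $10$. The Hilbert scheme $\mathrm{F}_1(X)$ is a curve; when it is smooth it is a smooth curve whose genus and anticanonical degree of $R$ are known. The surface $R$ lies in $|{-mK_X}|$ for a fixed $m$; for genus $10$ we expect $m=3$ from the standard formula $R\sim(\text{some multiple})(-K_X)$ coming from the degree of the line class. Through a general point of $R$ passes exactly one line. Through every point of $X$ pass at most finitely many lines, and the scheme $\mathrm{F}_1(X,P)$ has length at most $3$, since the lines through $P$ lie in the tangent cone cut by quadrics and cubics of the anticanonical embedding. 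So the claim is that the maximal length $3$ is attained.

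Suppose, for contradiction, that $\operatorname{length}\mathrm{F}_1(X,P)\le 2$ for every $P$. Then the multiplicity of $R$ at any point is at most $2$. The singular locus of $R$ then consists exactly of points where two lines meet, or where the normal bundle $\NNN_{L/X}=\OOO(1)\oplus\OOO(-2)$ produces a double structure. We would first show that $\mathrm{F}_1(X)$ must be smooth in this situation. A singular point of $\mathrm{F}_1(X)$ corresponds to a line of type $(1,-2)$. Near such a line one studies the tangent cones and expects a point where a non-reduced length-$2$ line scheme meets a third line, giving length $3$. This step handles the first sentence in the singular case; alternatively we can reduce to the smooth case by a degeneration/semicontinuity argument, using that the locus $\{P:\operatorname{length}\ge 3\}$ is closed.

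In the smooth case, define the incidence curve $C\subset \mathrm{F}_1(X)\times\mathrm{F}_1(X)$ of intersecting pairs (mod the involution), together with the map $C\to \Sing R$. Under our assumption this map is bijective: no triple points. Moreover, the normalisation $\nu\colon\tilde R\to R$ is a $\PP^1$-bundle over $\mathrm{F}_1(X)$, and the double curve $\nu^{-1}(\Sing R)$ is an étale double cover of $\Sing R$. Then compute in two ways:
\begin{itemize}
\item The class of the double curve, obtained by the adjunction formula $K_{\tilde R}=\nu^*(K_X+R)-D$, gives $\deg K_C$ and the degree of $D$ in terms of $g(\mathrm{F}_1(X))$ and $K_X^3=-18$.
\item The count of intersecting pairs, obtained via the degree of $C\to\mathrm{F}_1(X)$ (the number of lines meeting a general line), gives a second expression.
\end{itemize}
A length-$3$ point is exactly a triple point of $R$, and those points contribute a correction term. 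The expected number of triple points is a positive characteristic number. We would compute it by the double-point/triple-point formula for the map $\tilde R\to X$, i.e.\ Kleiman's multiple point formula. This formula expresses the triple point class in terms of $c_1,c_2$ of $\tilde R$, $\nu^*K_X$, and $\nu^*R$. It suffices to show that this number is positive, and is counted with multiplicities given by length, so that it cannot vanish.

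The main obstacle is justifying the multiple-point formula. Kleiman's triple point formula requires genericity (e.g.\ no non-curvilinear singularities, correct dimensions). We must check that the local picture at double points is as expected, using that the lines are smooth and $\dim\mathrm{F}_1(X,P)=0$. If the formula is too delicate, our fallback is the approach of the genus $9$ case: derive divisibility or degree constraints on $\deg K_C$ from an étale double cover, and contradict the value forced by the count of lines meeting a line. Finally, once a length-$3$ point exists and $\mathrm{F}_1(X)$ is smooth, the three lines are distinct. Indeed, a non-reduced point of $\mathrm{F}_1(X,P)$ would force a tangency between the line and the infinitesimal family, which would make $\mathrm{F}_1(X)$ singular at that line.
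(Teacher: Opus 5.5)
Your proposal has a genuine gap. The decisive step, producing a point with $\operatorname{length}\mathrm{F}_1(X,P)=3$ when $\mathrm{F}_1(X)$ is smooth, is never carried out, and the ``two ways'' count you describe cannot give a contradiction by itself. Write $U\cong\widetilde R$ for the universal line, $\xi$ for the pull-back of $H$, and $\widetilde D\subset U$ for the double curve. Since $R\sim 3H$, adjunction gives $\widetilde D\sim 2\xi-K_U$ and so $2p_a(\widetilde D)-2=(K_U+\widetilde D)\cdot\widetilde D$. Riemann--Hurwitz for the degree-$4$ map $C\to\mathrm{F}_1(X)$ gives $\deg K_C=4\deg K_{\mathrm{F}_1(X)}+\deg(\text{ramification})$. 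Nothing in the hypothesis ``no length-$3$ points'' controls that ramification, which lies over lines meeting fewer than four distinct lines. So comparing the two expressions only computes the degree of the ramification. The fallback to the genus-$9$ argument does not rescue this. That argument needs $C\to\mathrm{F}_1(X)$ to be \'etale, i.e.\ every line meets exactly four lines at distinct points. Negating this only yields the disjunction ``a point on three lines, or a line meeting fewer than four lines'', exactly as in Proposition~\ref{pl0}. The second alternative gives cylindricity through Corollary~\ref{mp} but no point of length $3$, so it cannot prove this theorem. Moreover, the divisibility contradiction itself disappears for $g=10$. In the notation of the proof of Proposition~\ref{pl0} one gets $K_{\widetilde R}\sim(2h^*H-E)|_{\widetilde R}$, which only forces $\deg K_C$ to be even, and $4\deg K_{\mathrm{F}_1(X)}$ is even. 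The paper states explicitly that this method does not apply for $g=10$.

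So everything rests on Kleiman's triple-point formula, and there you neither compute the class nor verify the hypotheses for this particular map $U\to X$. You would need to show the class is represented by a cycle supported on the actual triple-point locus. That requires the iterated double-point schemes to have expected dimension and control of points of $\Sing(R)$ where the two sheets are not transversal. This is exactly the step you flag as the main obstacle and leave open. A formal application with the standard values $\xi^2=54$ and $g(\mathrm{F}_1(X))=10$ gives $m_3=144$, i.e.\ $24$ triple points, matching the Chern number $24$ in the paper's argument. So the route is plausible, but as written it is a heuristic.

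The missing idea is the one the paper uses, which needs no multiple-point theory. Work in Mukai's model $X=V_{18}\cap\PP^{11}$ with the $\G_2$-invariant contact subbundle $\mathcal{E}\subset\operatorname{T}V_{18}$. Here $\operatorname{T}V_{18}/\mathcal{E}\cong\OOO(1)$, and $\mathcal{E}_P$ is the span of the cone of lines through $P$, a cone over a twisted cubic. The composite $\operatorname{T}X\to\operatorname{T}V_{18}|_X\to\OOO_X(1)$ is then a section of $\Omega_X(1)$. At a zero $P$ one has $\operatorname{T}_PX\subset\mathcal{E}_P$, so $\mathrm{F}_1(X,P)$ is a hyperplane section of the twisted cubic and has length $3$. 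Zeros must exist because $c_3(\Omega_X(1))=c_2(X)\cdot H=24\neq 0$ by Lemma~\ref{lcn}. This works for every $X$ at once, so no separate treatment of singular $\mathrm{F}_1(X)$ is needed. Your semicontinuity reduction there is sound in principle, but it only transfers the gap to the smooth case. Your last step, distinctness of the three lines when $\mathrm{F}_1(X)$ is smooth, is correct and matches the paper.
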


\begin{remark}
    Suppose that $X$ is a smooth Fano threefold of genus $g\geqslant 7$ with $\Pic(X)=\ZZ[-K_X]$. Then for any point $P\in X$ there are at most three lines having a common point $P$, i.e. the length of the Hilbert scheme $\mathrm{F}_1(X,P)$ is less than or equal to three for every $P\in X$ (see \cite[Corollary A.6]{kp}).
\end{remark}

The structure of the present paper is as follows. In Section \ref{pre} we recall two constructions of Fano threefolds with $\Pic(X)=\ZZ[-K_X]$. Section \ref{pre} also contains some general facts about the Hilbert schemes of lines of smooth Fano threefolds $X$ of genus $9$ and $10$ with $\Pic(X)=\ZZ[-K_X]$. In Section \ref{on} we show that the complement of any singular cubic surface in $\PP^3$ and the complement of any singular surface of degree $4$ in a smooth three-dimensional quadric are cylindrical. In Section \ref{cyl} we prove that the existence of three distinct lines having a common point on $X$ implies the cylindricity of $X$; we also prove that the existence of a line on $X$ intersecting less than $14-g$ other lines implies the cylindricity of $X$. In Section \ref{lin2} we show that $X$ has the desired property, i.e., in the case $g=9$, we prove that if the Hilbert scheme of lines is smooth, then either there are three distinct lines having a common point or there is a line intersecting less than five other lines and, in the case $g=10$, there are three lines on $X$ meeting at a common point.

The author is grateful to Yuri Prokhorov for stating the problem, useful discussions and constant attention to this work. The author thanks Alexander Kuznetsov, whose remarks allowed to significantly improve this paper, for helpful suggestions and useful conversations. Also, the author would like to thank Grigory Belousov for useful discussions.

\section{Preliminaries}
\label{pre}

In this section, we provide general information on Fano threefolds of genus $9$ and $10$. We also provide a few properties of lines on Fano threefolds of genus $9$ and $10$.

\begin{definition}
    A smooth Fano threefold $X$ is \emph{prime} if $\Pic(X)=\ZZ[-K_X]$.
\end{definition}

Recall that if $X$ is a smooth prime Fano threefold of genus $g=9$ or $g=10$, then $-K_X$ is very ample, so $X\subset \PP^{g+1}$.

The following theorem gives one description of Fano threefolds of genus $9$ and $10$. Historically, it was obtained after the 
Iskovskikh construction (Theorem \ref{ic}), but we provide it first, because it is more invariant.

\begin{theorem}[S.~Mukai {\cite{muk}}, {\cite{bay}}]
    \label{muk}
    Suppose $X=X_{2g-2}\subset \PP^{g+1}$ is a smooth anticanonically embedded prime Fano threefold of genus $g=9$ or $g=10$. Then $X$ is a transversal linear section of the $(15-g)$-dimensional homogeneous Fano variety $V_{2g-2}\subset \PP^{13}$. More precisely,
    \begin{align*}
    X_{16}&=V_{16}\cap \PP^{10}\subset \PP^{13}&\text{and}\qquad\qquad X_{18}&=V_{18}\cap \PP^{11}\subset \PP^{13},\quad\text{where}\\
    V_{16}&=\operatorname{SP}_6/P_3&\text{and}\qquad\qquad\hspace{3pt} V_{18}&=\G_2/P_2.
    \end{align*}
\end{theorem}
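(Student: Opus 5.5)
The statement is Mukai's classification theorem, quoted from \cite{muk}, \cite{bay}. So the natural plan is to recall the structure of Mukai's vector bundle method rather than to reprove everything.

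The overall strategy is to produce a globally generated rank-$r$ vector bundle $E$ on $X$, with prescribed Chern classes and space of sections, whose sections embed $X$ into a Grassmannian. For $g=9$ the target is $\G(3,6)$ and the bundle carries a symplectic form. For $g=10$ the target is $\G(5,7)$, or equivalently $\G(2,7)$ via the dual, and the bundle is compatible with a generic $3$-form on $\CC^7$ whose stabilizer is $\G_2$.

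The construction of $E$ proceeds in four steps.

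\begin{enumerate}
\item Take a smooth K3 surface section $S\in|-K_X|$, which exists by Shokurov's theorem. This gives a polarized K3 surface $(S,H)$ of genus $g$.
\item On $S$, use the Mukai lattice. Choose the Mukai vector $v=(r,H,s)$ with $rs=g-1$, namely $(r,s)=(3,\cdot)$ for $g=9$ and $(2,\cdot)$ or its dual for $g=10$, so that $\langle v,v\rangle=-2$. Then Riemann--Roch together with Brill--Noether generality of $(S,H)$ yields a unique stable rigid bundle $E_S$ with this vector. Brill--Noether generality holds because $\Pic(X)=\ZZ$ forces the general $S$ to have Picard rank one; this is where primality is used.
\item Extend $E_S$ to $X$. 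Vanishing of $H^1$ and $H^2$ of $\operatorname{End}E_S(-H)$ allows the deformation along the pencil and Lefschetz-type extension. Alternatively, use the Serre construction from conics or lines on $X$ directly.
\item Check that $E$ is globally generated with $h^0(E)=6$, respectively $7$, and vanishing higher cohomology.
\end{enumerate}

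The sections of $E$ give a morphism $\varphi\colon X\to\G(r,h^0(E))$ pulling back the Plücker class to $-K_X$.

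Next I show that $\varphi$ is an embedding whose image lies in the relevant homogeneous subvariety. For $g=9$, the skew form $\wedge^2E\to\OOO_X$ comes from $h^0(\wedge^2E^\vee)$ and a stability argument, so $X$ lands in the Lagrangian Grassmannian $\operatorname{SP}_6/P_3$. For $g=10$, the relevant $3$-form is identified as the kernel element of $\wedge^3H^0(E)^\vee\to H^0(\wedge^3E^\vee)$, and its nondegeneracy gives $V_{18}=\G_2/P_2$. Comparing degrees and dimensions, $X$ equals $V\cap\PP^{g+1}$, and smoothness of $X$ gives transversality.

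The hardest step is the existence and global generation of $E$ on $X$, together with the identification of the invariant tensor that cuts out the homogeneous variety, especially the $\G_2$ $3$-form. I would rely on Mukai's K3 analysis and the alternative verification by Iskovskikh's double projection cited via \cite{bay}.
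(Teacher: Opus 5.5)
The paper gives no proof of this theorem; it only quotes it from \cite{muk} and \cite{bay}. Falling back on Mukai's vector-bundle method is therefore the same route. However, the sketch you give of that method fails at its load-bearing steps.

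First, the extension from $S$ to $X$ cannot rest on the vanishing you invoke, because that vanishing is false. By Serre duality on the K3 surface, $H^2(S,\operatorname{End}E_S(-H))\cong H^0(S,\operatorname{End}E_S(H))^\vee$, and this contains $H^0(S,\OOO_S(H))^\vee\neq0$. This is exactly the group in which the obstruction to extending $E_S$ to the first-order neighbourhood of $S$ in $X$ lives. So no Grothendieck--Lefschetz-type argument goes through as stated, and this step genuinely needs a different input. Your fallback via the Serre construction uses the wrong curves. A section of a rank-$2$ bundle with $c_1=H$ vanishes along a curve with $\omega_C\cong\OOO_C(K_X+H)=\OOO_C$. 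So for $g=10$ you need elliptic sextics ($c_2(E)\cdot H=6$), not lines or conics. For $g=9$ the bundle has rank $3$, so the rank-$2$ Serre correspondence does not apply directly.

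Second, the invariant tensors are misplaced: they live on the space of sections, not on $E$.
For $g=9$ with $c_1(E)=H$ one has $\wedge^2E^\vee\cong E(-H)$, which has no sections by stability. In any case a skew form on a rank-$3$ bundle is degenerate. The symplectic form is instead an element of the kernel of $\wedge^2H^0(E)\to H^0(\wedge^2E)$, i.e.\ a skew form on $H^0(E)^\vee$ for which every fibre $E_x^\vee\subset H^0(E)^\vee$ is isotropic. You must then show that this kernel is spanned by a non-degenerate form.
For $g=10$, your map $\wedge^3H^0(E)^\vee\to H^0(\wedge^3E^\vee)$ has zero target for both the rank-$2$ and the rank-$5$ bundle, so it singles out nothing. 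The correct condition uses the rank-$5$ bundle $E$: the $3$-form spans the kernel of $\wedge^4H^0(E)\to H^0(\wedge^4E)$, with $\wedge^4H^0(E)\cong\wedge^3H^0(E)^\vee$. This encodes that $\G_2/P_2\subset\operatorname{Gr}(2,7)$ is the zero locus of the section of $\wedge^4\mathcal{Q}\cong\mathcal{Q}^\vee(1)$ defined by the form, where $\mathcal{Q}$ is the universal quotient bundle. You must also show that this form lies in the open $\GL_7$-orbit.

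Two smaller slips remain.
\begin{enumerate}
\item The Mukai vector condition is off by one. Since $\langle v,v\rangle=H^2-2rs=2g-2-2rs$, rigidity requires $rs=g$, not $rs=g-1$; for $g=9$ and $r=3$ your condition has no integral solution. The correct vectors are $(3,H,3)$ and $(2,H,5)$ or $(5,H,2)$, consistent with $h^0=r+s=6,7$.
\item Comparing degrees identifies $X$ with $V\cap\PP^{g+1}$ only once excess components are excluded. This needs an argument, for example showing that the $(g-2)(g-3)/2$ quadrics through $X$ are restrictions of the quadrics cutting out $V$.
\end{enumerate}
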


In the theorem $\operatorname{SP}_6$ and $\G_2$ denote the simple algebraic groups of types $\operatorname{C}_3$ and $\G_2$, respectively, and $P_3$ (respectively, $P_2$) denotes a maximal parabolic subgroup corresponding to the long simple root.

Further, we provide another construction of Fano threefolds of genus $9$ and $10$.

\begin{theorem}[{V.~A.~Iskovskikh \cite{isk3}}]
\label{ic}
    Suppose that $X=X_{2g-2}\subset \PP^{g+1}$ is a smooth anticanonically embedded prime Fano threefold of genus $g=9$ or $g=10$. Let $H\sim-K_X$ be the hyperplane class and let $L\subset X$ be a line. Denote by $$\psi=\phi_{|H-2L|}:X\dashrightarrow W\subset\PP^{g-6}$$ the double projection from the line $L$, i.e. the rational map given by the linear system $|H-2L|$.
    If $\varphi:\widehat X\to X$ is the blow-up of the line $L$, then there exists a Sarkisov link:
    \begin{equation*}
    \centerline{
        \xymatrix{
        &\widehat{F}\ar@{^{(}->}[rr]\ar[dl]&&\widehat
        X\ar[dl]_{\varphi}\ar@
        {-->}[rr]^{\chi}&
        & \widetilde X\ar[dr]^{\sigma}&&\widetilde{D}\ar@{_{(}->}[ll]\ar[dr]&
        \\
        L\ar@{^{(}->}[rr]&&X\ar@{-->}[rrrr]_{\psi}&&&&W&&\Gamma\ar@{_{(}->}[ll]
        }}
        \end{equation*}
        where $\chi$ is a flop and $\sigma$ is an extremal Mori contraction. Moreover, $W$ is a smooth Fano threefold and the morphism $\sigma:\widetilde{X}\to W$ is the blow-up of a smooth irreducible curve $\Gamma\subset W$ and $\widetilde{D}$ is its exceptional divisor. Next, let $\widehat{F}=\varphi^{-1}(L)$ be the exceptional divisor of $\varphi$, let $\widetilde{F}\subset\widetilde{X}$ be the proper transform of $\widehat{F}$, and let $F_L=\sigma(\widetilde{F})\subset W$.
    \begin{itemize} 
        \item If $g=9$, then $W=\PP^3$, $\Gamma\subset\PP^3$ is a non-hyperelliptic (see \cite[Remark 4.3.9. (ii)]{isk2}) curve of genus $3$ and degree $7$ lying on the cubic surface $F_L$.
        \item If $g=10$, then $W=Q\subset\PP^4$ is a smooth quadric, $\Gamma\subset W$ is a curve of genus $2$ and degree $7$ lying on the surface $F_L$ of degree $4$.  
    \end{itemize}
\end{theorem}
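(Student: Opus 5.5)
The plan is to follow the standard Iskovskikh–Takeuchi method of analysing the blow-up of a line via the two-ray game. Let $\varphi\colon\widehat X\to X$ be the blow-up of $L$ with exceptional divisor $\widehat F$, and write $\widehat H=\varphi^*H$. Then $-K_{\widehat X}=\widehat H-\widehat F$. Since $L$ has genus $0$ and $H\cdot L=1$, the usual formula gives
\[
(-K_{\widehat X})^3=(2g-2)-2(-K_X\cdot L)+2p_a(L)-2=2g-6 .
\]
This equals $12$ for $g=9$ and $14$ for $g=10$. Moreover $\widehat H^2\cdot\widehat F=1$ and $\widehat F^3=-\deg\NNN_{L/X}=1$.

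\textbf{Step 1: $-K_{\widehat X}$ is nef and big.} The linear system $|H-L|$ is the projection from $L$. Since $X$ is cut out by quadrics and $g\geqslant 9$, it is base point free on $\widehat X$. The curves on which $-K_{\widehat X}$ is trivial are the proper transforms of lines meeting $L$, together with possibly a curve in $\widehat F$ when $\NNN_{L/X}\cong\OOO(1)\oplus\OOO(-2)$. There are finitely many such curves, because the family of lines is one-dimensional. Hence the anticanonical morphism of $\widehat X$ is small. This gives the flop $\chi\colon\widehat X\dashrightarrow\widetilde X$, which exists by Kollár's theorem for terminal threefolds. The variety $\widetilde X$ is again smooth weak Fano with Picard rank $2$, and $-K$ and $\widehat F$ transform to the corresponding classes on $\widetilde X$. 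Since $\widetilde X$ is smooth and has Picard rank $2$, its other $K$-negative extremal ray gives a contraction $\sigma\colon\widetilde X\to W$.

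\textbf{Step 2: numerical classification of $\sigma$.} The contraction $\sigma$ is one of the following:
\begin{itemize}
\item a conic bundle over a surface;
\item a del Pezzo fibration over $\PP^1$;
\item a divisorial contraction of type E1--E5 in Mori's list.
\end{itemize}
Write the exceptional divisor, or the pullback of the generator of $\Pic(W)$, as $\widetilde D\sim a(-K_{\widetilde X})-b\widetilde F$. The intersection numbers are invariant under the flop, so one can impose the constraints of each type:
\begin{itemize}
\item $\widetilde D^3=0$ and $\widetilde D^2\cdot(-K)=0$ for a del Pezzo fibration;
\item $\widetilde D^2\equiv 0$ for a conic bundle;
\item the formulas for the blow-up of a curve or a point for the divisorial types.
\end{itemize}
Solving the resulting Diophantine equations, together with $h^0$ counts from Riemann–Roch, should leave only one possibility in each genus. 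It is a divisorial contraction of type E1 onto a smooth Fano threefold $W$ with $\sigma^*\OOO_W(1)\sim -K_{\widetilde X}-\widetilde F$, which is the proper transform of $H-2L$. This confirms that $\psi=\sigma\circ\chi\circ\varphi^{-1}$ is the double projection.

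Then $h^0(H-2L)=g-5$ gives $W\subset\PP^{g-6}$, and $(-K_{\widetilde X}-\widetilde F)^3$ gives $\deg W$:
\begin{itemize}
\item for $g=9$, $\deg W=1$ in $\PP^3$, so $W=\PP^3$;
\item for $g=10$, $\deg W=2$ in $\PP^4$, so $W=Q$, a quadric that must be smooth because $W$ is smooth.
\end{itemize}
The genus and degree of $\Gamma$ then follow from the standard formulas for $(-K_{\widetilde X})^3$ and $(-K_{\widetilde X})^2\cdot\widetilde D$ of a blow-up of a curve. These give $\Gamma$ of degree $7$ and genus $3$ in $\PP^3$, and of degree $7$ and genus $2$ in $Q$. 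Finally, expressing $\widetilde F$ in the basis $\sigma^*\OOO_W(1)$, $\widetilde D$ gives $\widetilde F\sim\sigma^*\OOO_W(k)-\widetilde D$. Hence $F_L\in|\OOO_W(k)|$ contains $\Gamma$, and the numbers force $k=3$ for $g=9$ and $k=2$ on $Q$, giving a surface of degree $4$, for $g=10$.

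\textbf{Main obstacle.} The hardest step is excluding the other numerical cases in Step 2, in particular the non-birational ones (del Pezzo fibrations, conic bundles) and the E2--E5 point contractions. Some of these have integral solutions that survive the purely numerical equations. I would first use the primality of $X$ together with $h^0$ computations: for example, $h^0(H-2L)=g-5\geqslant 4$ together with $(H-2L)^3>0$ shows the map is birational onto a threefold, which rules out the fibrations. I would exclude point contractions by comparing $\widetilde D$ with the known values of $(-K)^2\cdot\widetilde D$ and $(-K)\cdot\widetilde D^2$.

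A secondary difficulty is proving that $\Gamma$ is smooth and irreducible. For this I would use that $\sigma$ is an extremal contraction of a smooth threefold, hence a blow-up of a locally complete intersection curve that is smooth by Mori's classification. Irreducibility follows because $\rho(\widetilde X)=2$.
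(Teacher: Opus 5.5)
The paper gives no proof of this statement; it is quoted from Iskovskikh. Your overall architecture (blow up $L$, check that $-K_{\widehat X}=\widehat H-\widehat F$ is nef and big with small anticanonical morphism, flop, contract the other extremal ray) is the standard one behind that result. There is, however, a genuine gap.

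\textbf{The flop does not preserve all intersection numbers.} Step~2, which carries all the content, rests on the claim that intersection numbers are invariant under the flop, and this is false. Only those with a factor $-K$ are invariant, since they can be computed on the anticanonical model. So $(-K)^3=2g-6$, $(-K)^2\cdot\widetilde F=3$ and $(-K)\cdot\widetilde F^2=-2$ carry over, but $\widetilde F^3$ does not. For an Atiyah--Kulikov flop of a curve $C$ one has $(D^+)^3=D^3-(D\cdot C)^3$, and here $\widetilde F^3=\widehat F^3-e$ with $e=14-g$ (for general $L$, one for each line meeting $L$).

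Taken literally, your invariance gives the following wrong values:
\begin{itemize}
\item $(-K_{\widetilde X}-\widetilde F)^3=(\widehat H-2\widehat F)^3=2g-22$, i.e.\ $-4$ and $-2$ instead of $\deg W=1,2$;
\item for $g=9$, $\widetilde D^3=-352$ for $\widetilde D=3(-K)-4\widetilde F$, instead of $-\deg\NNN_{\Gamma/\PP^3}=-32$.
\end{itemize}
For the same reason your exclusion of fibrations via ``$(H-2L)^3>0$'' fails. On $\widehat X$ this number is negative, because the flopping curves are base curves of $|\widehat H-2\widehat F|$ (it has degree $-1$ on them). Positivity on $\widetilde X$ is exactly what must be proved. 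Also, $\widehat H^2\cdot\widehat F=0$, not $1$; the value $-1$ is $\widehat H\cdot\widehat F^2$.

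\textbf{The key identification is only asserted.} The identification $\sigma^*\OOO_W(1)=-K_{\widetilde X}-\widetilde F$ is stated (``should leave only one possibility''), not derived. You need one of two routes.
\begin{itemize}
\item Takeuchi's method done honestly: carry $e>0$ as an extra unknown and add non-invariant constraints, for instance $(-K)^2\cdot M=12-\deg\Delta$ for conic bundles over $\PP^2$.
\item Iskovskikh's geometric input: show that $|\widehat H-2\widehat F|$ has no base curves other than the flopping curves. Then $M=-K_{\widetilde X}-\widetilde F$ is nef on $\widetilde X$, being positive on the flopped curves. $M$ is trivial on the transforms of conics meeting $L$, so it spans the second boundary ray of the nef cone. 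Since it is primitive, $M=\sigma^*A$, and $h^0(M)\geqslant g-5\geqslant 4$ rules out fibrations.
\end{itemize}
From there, invariant numbers suffice. The equations $(-K)\cdot M^2=rA^3=2g-14$, $(-K)^2\cdot M=r^2A^3-\deg\Gamma=2g-9$ and $(-K)^3=2g-6$ exclude point contractions and force E1 with $W=\PP^3$, respectively $Q$. Your final computations of $(\deg\Gamma,p_a(\Gamma))=(7,3),(7,2)$ and $k=3,2$ are then correct.

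\textbf{Two smaller gaps.}
\begin{itemize}
\item Finiteness of the lines meeting $L$ does not follow from $\dim\mathrm{F}_1(X)=1$, since a whole component could meet $L$. This is Iskovskikh's result, Theorem~\ref{thl}(4).
\item You never show that $\Gamma$ is non-hyperelliptic for $g=9$. This follows from nefness of $-K_{\widetilde X}$: a hyperelliptic curve of degree $7$ and genus $3$ in $\PP^3$ has a $5$-secant line, on whose transform $-K_{\widetilde X}$ would be negative.
\end{itemize}
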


The subscript $L$ in the notation $F_L$ is there to emphasise that the corresponding surface depends on the line $L\subset X$. 

 Recall that $$\mathrm{F}_1(X)=\operatorname{Hilb}^{1+t}(X;-K_X)$$ denotes the Hilbert scheme of (anticanonical) lines of a smooth Fano variety $X$. We refer to \cite{kps} for a survey on Hilbert schemes of lines and conics on Fano threefolds of Picard rank one.

\begin{theorem}
\label{thl}
   Suppose that $X=X_{2g-2}\subset \PP^{g+1}$ is a smooth anticanonically embedded prime Fano threefold of genus $g\geqslant 3$. Then the following assertions hold:
 \begin{enumerate}
\item
$\mathrm{F}_1(X)\neq\varnothing$ and any irreducible component of $\mathrm{F}_1(X)$ has dimension $1$.
 \item
For the normal bundle of any line $L\subset X$, the following holds:
$$\NNN_{L/X}\cong
\begin{cases}
\OOO_{\PP^1}\oplus\OOO_{\PP^1}(-1)\qquad\text{or}
\\
\OOO_{\PP^1}(1)\oplus\OOO_{\PP^1}(-2)
\end{cases}$$
 \item
The scheme $\mathrm{F}_1(X)$ is smooth at a point $[L]\in
\mathrm{F}_1(X)$ if and only if the normal bundle $\NNN_{L/X}$ is isomorphic to $\OOO_{\PP^1}\oplus\OOO_{\PP^1}(-1)$.
 \item
If $g\geqslant 7$, then any line $L$ on $X$ intersects at most a finite number of other lines $L_i$ on $X$.
\item For $g\in\{9, 10\}$, the scheme $\mathrm{F}_1(X)$ is generically reduced.
\end{enumerate}
\end{theorem}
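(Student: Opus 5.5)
This theorem collects standard facts, so the plan is to reduce each item to the local geometry of a line in the anticanonical embedding $X\subset\PP^{g+1}$, and to cite the literature for the global statements.

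Since $X$ is cut out by quadrics for $g\geqslant 4$ (and handled separately for $g=3$), a line $L$ satisfies $-K_X\cdot L=1$. Hence $\deg\NNN_{L/X}=-K_X\cdot L-2=-1$, and $\NNN_{L/X}\cong\OOO(a)\oplus\OOO(b)$ with $a+b=-1$.

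\textbf{Items (2) and (3).} First I would bound $a\leqslant 1$. The inclusion $\NNN_{L/X}\subset\NNN_{L/\PP^{g+1}}\cong\OOO(1)^{g}$ gives $a,b\leqslant 1$, and with $a+b=-1$ this leaves only the splittings $(0,-1)$ and $(1,-2)$. The sharper exclusion of $(2,-3)$ needs more than degree counting. Using $a\leqslant 1$, the cohomology is $h^0(\NNN_{L/X})=1$ or $2$, and $h^1(\NNN_{L/X})=0$ or $1$, respectively. For (3), the tangent space to $\mathrm{F}_1(X)$ at $[L]$ is $H^0(\NNN_{L/X})$, and obstructions lie in $H^1$. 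In the case $(0,-1)$ we have $H^1=0$ and $h^0=1$, so $\mathrm{F}_1(X)$ is smooth of dimension $1$ at $[L]$. In the case $(1,-2)$ the tangent space is $2$-dimensional, while by (1) every component is $1$-dimensional, so $\mathrm{F}_1(X)$ is singular at $[L]$.

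\textbf{Item (1).} The lower bound $\dim_{[L]}\mathrm{F}_1(X)\geqslant h^0-h^1=\chi(\NNN_{L/X})=1$ holds at every point. For the upper bound, suppose some component had dimension at least $2$. The lines of that family would then sweep out $X$ (or a surface containing a $2$-dimensional family of lines, i.e.\ a plane), and a plane is excluded by $\Pic(X)=\ZZ H$ with $H^3=2g-2$. If instead lines passed through a general point, a standard bend-and-break and rational-curve argument (or Shokurov's theorem) gives a contradiction, using $-K_X=H$ and $\Pic(X)=\ZZ$. For nonemptiness I would cite Shokurov's existence of lines; this is the main obstacle, and I would simply quote it (see \cite{isk2}, \cite{kps}).

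\textbf{Item (4).} If a line $L$ met infinitely many lines, they would sweep out a surface $S$ containing $L$ and ruled by lines meeting $L$. Projection from $L$ would contract $S$ to a curve, so $S$ lies in a cone over a curve with vertex $L$. The degree of $S$ is a multiple of $H^2$, that is, at least $2g-2$. For $g\geqslant 7$ this contradicts the bound on the degree of such a surface coming from the double projection in Theorem \ref{ic}: the lines meeting $L$ are exactly the flopping curves, and there are finitely many of them. For $g=7$ I would cite \cite{isk2}.

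\textbf{Item (5).} By (3), generic reducedness of $\mathrm{F}_1(X)$ amounts to a general line on each component having normal bundle $\OOO\oplus\OOO(-1)$. I would obtain this via Theorem \ref{muk}. For $g=9$, $\mathrm{F}_1(X)$ is identified with a plane quartic curve coming from the Lagrangian Grassmannian description. For $g=10$ it is a curve on the associated abelian surface. In both cases the lines are nonreduced along at most finitely many points, so I would cite \cite{kps} and the references there.
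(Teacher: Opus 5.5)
Your arguments for (2), (3) and the dimension part of (1) are sound. They are also more self-contained than the paper, which proves the whole theorem purely by citation (\cite{sho}, \cite{isk2}, \cite{isk3}, \cite{gru}, \cite{pro}). The inclusion $\NNN_{L/X}\subset\NNN_{L/\PP^{g+1}}\cong\OOO_{\PP^1}(1)^{g}$ already excludes $(2,-3)$, so nothing further is needed there. In the covering case of (1), the clean reason is that a line through a general point would be free, i.e. $\NNN_{L/X}$ would be globally generated, which is impossible when $a+b=-1$. Also, $H\cdot L=1$ is simply the definition of a line. Whether $X$ is cut out by quadrics is irrelevant, and that claim is false for $g=4$.

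\textbf{The genuine gap is item (4).} The ``bound on the degree of such a surface coming from the double projection'' is never stated. The facts that $\deg S\geqslant 2g-2$ and that $S$ lies in a cone with vertex $L$ contradict nothing by themselves.

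Your fallback is that the lines meeting $L$ are the flopping curves, hence finitely many. This is circular. The proper transforms $\widehat{L'}$ of lines meeting $L$ are $-K_{\widehat X}$-trivial on $\widehat X=\Bl_L X$, so they are contracted by the anticanonical morphism. That this morphism is small, i.e. that $\chi$ is a flop at all, already presupposes there are only finitely many of them. Indeed, Remark \ref{rm3} and item (4) come from the same \cite[Proposition 3(iv)]{isk3}. Moreover, Theorem \ref{ic} is stated only for $g\in\{9,10\}$, so $g=8$ and $g=12$ are not covered at all.

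A direct argument that works for every $g\geqslant 7$ runs as follows.
\begin{itemize}
\item If $L'\neq L$ meets $L$, then $(\varphi^*H-2\widehat F)\cdot\widehat{L'}=1-2\,\widehat F\cdot\widehat{L'}<0$. Hence $\widehat{L'}$ lies in the base locus of $|\varphi^*H-2\widehat F|$.
\item If infinitely many such lines existed, they would sweep out an irreducible surface $S\sim kH$ with $k\geqslant 1$. Its proper transform $\widehat S\sim k\varphi^*H-m\widehat F$, with $m=\operatorname{mult}_L S$, would be a fixed component.
\item The residual class is $(1-k)\varphi^*H+(m-2)\widehat F$. Pushing forward forces $k=1$, and multiples of $\widehat F$ move in no pencil. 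So $h^0(X,\OOO_X(H)\otimes I_L^2)\leqslant 1$, where $I_L$ is the ideal sheaf of $L$.
\item On the other hand, $h^0(X,\OOO_X(H)\otimes I_L^2)\geqslant (g+2)-h^0(\OOO_L(1))-h^0(\NNN^*_{L/X}(1))=g-5$. In both splitting types $h^0(\NNN^*_{L/X}(1))=5$, so this is at least $2$ for $g\geqslant 7$, a contradiction.
\end{itemize}
This is exactly where the hypothesis $g\geqslant 7$ enters.

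Item (5) is also off. For $g=9$ the curve $\mathrm{F}_1(X)$ is not a plane quartic: Lemma \ref{lg} gives $p_a(\mathrm{F}_1(X))=17$. The plane quartic is the auxiliary genus-$3$ curve $\Gamma$ attached to $X$. In any case, identifying the underlying curve would not show that the scheme structure is generically reduced. That is the nontrivial content of \cite{gru} and \cite{pro}, which is all the paper invokes here, so you should simply cite those.
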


\begin{proof}
    The first assertion follows from \cite[Theorem 1.2]{sho} and \cite[Proposition 4.2.2(i)]{isk2}; and \cite[Lemma 4.2.1(i)]{isk2} implies the second assertion. The third assertion is precisely \cite[Proposition 4.2.2(ii)]{isk2} and the fourth assertion is precisely \cite[Proposition 3(iv)]{isk3}. The fifth assertion for $g=9$ and $g=10$ is proved in \cite{gru} and \cite{pro}, respectively.
\end{proof}

\begin{remark}[{\cite[Proposition 3(iv)]{isk3}}]
\label{rm3}
    In the notation of Theorem \ref{ic}, the flopped curves $\widehat{C}_i\subset\widehat{X}$ of $\chi$ are precisely the proper transforms of the lines intersecting $L$ and, if $\NNN_{L/X}\cong \OOO_{\PP^1}(1)\oplus\OOO_{\PP^1}(-2)$, the exceptional section of $\widehat{F}$.
\end{remark}

\begin{theorem}[{\cite[Theorem 2.13]{kis}}]
\label{lot2}
Suppose that $X=X_{2g-2}\subset \PP^{g+1}$ is a smooth anticanonically embedded prime Fano threefold of genus $g=9$ or $g=10$.
         Let $L\subset X$ be a line. Then the normal bundle $\NNN_{L/X}$ is isomorphic to $\OOO_{\PP^1}\oplus\OOO_{\PP^1}(-1)$ if and only if the corresponding surface $F_L$ (see the notation of Theorem \ref{ic}) is normal.
\end{theorem}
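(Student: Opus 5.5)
The plan is to read off the geometry of $F_L$ from the surface $\widehat F$ and follow it through the Sarkisov link of Theorem \ref{ic}. First I identify $\widehat F=\PP(\NNN^\vee_{L/X})$. It is the Hirzebruch surface $\mathbb{F}_1$ when $\NNN_{L/X}\cong\OOO_{\PP^1}\oplus\OOO_{\PP^1}(-1)$, and $\mathbb{F}_3$ when $\NNN_{L/X}\cong\OOO_{\PP^1}(1)\oplus\OOO_{\PP^1}(-2)$. Next, using $-K_W$, I express $\sigma^*F_L$ in terms of $H$, $\widehat F$ and $\widetilde D$ on $\widetilde X$; the flop $\chi$ is an isomorphism in codimension one, so these classes transfer. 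This should give $\sigma^*F_L\sim\widetilde F+m\widetilde D$, and the degree count should force $m=1$ in the normal-bundle-$(0,-1)$ case. The target degrees are $3$ in $\PP^3$ and $4$ in $Q$, and $\Gamma$ has degree $7$.

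\emph{($\Rightarrow$)} Assume $\NNN_{L/X}\cong\OOO_{\PP^1}\oplus\OOO_{\PP^1}(-1)$. By Remark \ref{rm3}, the flopped curves are exactly the proper transforms $\widehat C_i$ of the lines meeting $L$. None of them lies in $\widehat F$; each meets it transversally at one point.
\begin{itemize}
\item Locally a flop of a $(-1,-1)$-type curve meeting a divisor transversally replaces the divisor by its blow-up at that point. Curves of other types need the same check via the local structure of Atiyah-type flops. Hence $\widetilde F$ is a smooth surface, namely $\mathbb{F}_1$ blown up at finitely many points.
\item Since $m=1$, the curve $\widetilde F\cap\widetilde D$ maps birationally onto $\Gamma$. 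So $\sigma|_{\widetilde F}\colon\widetilde F\to F_L$ is birational and an isomorphism outside finitely many fibres of $\widetilde D$.
\item Therefore $F_L$ is smooth in codimension one. Being a Cartier divisor on the smooth threefold $W$, it is $S_2$. By Serre's criterion $F_L$ is normal.
\end{itemize}

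\emph{($\Leftarrow$)} Assume $\NNN_{L/X}\cong\OOO_{\PP^1}(1)\oplus\OOO_{\PP^1}(-2)$. Now the negative section $s\subset\widehat F\cong\mathbb{F}_3$ is itself a flopping curve (Remark \ref{rm3}).
\begin{itemize}
\item I compute $\widehat F\cdot s<0$, and then $\widetilde F\cdot s^+>0$ for the flopped curve $s^+$.
\item I show that $\widetilde F$ contains $s^+$ with multiplicity at least $2$. Equivalently, $\widetilde F\to F_L$ is not an isomorphism in codimension one, or $\widetilde F$ is non-normal along $s^+$.
\item Either way $F_L$ is singular along the curve $\sigma(s^+)$, or along $\Gamma$ if $m\ge2$. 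So $F_L$ is not normal.
\end{itemize}
An alternative is to argue by contradiction. If $F_L$ were normal, the computation in the first direction would produce a smooth birational model $\widetilde F$ of $\widehat F$ containing no flopped curve. This is impossible, because $s$ must be flopped.

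The main obstacle is this second direction: the local analysis of how the flop of the $(-3)$-curve $s$ changes $\widehat F$. I would first try the explicit local description of this flop as a family of Reid pagodas. In that chart I would compute the proper transform of $\widehat F$ and check that it acquires a double curve, or that $m=2$. The intersection numbers $\widehat F\cdot s$ and $H\cdot s$ would be my consistency check.
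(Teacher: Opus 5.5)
The paper gives no proof of this statement; it is quoted from \cite[Theorem 2.13]{kis}. Judged on its own, your proposal has a genuine gap in each direction.

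\textbf{The direction $(\Rightarrow)$.} Your argument only works when every flopping curve $\widehat L_i$ is a $(-1,-1)$-curve. The phrase ``the same check via the local structure of Atiyah-type flops'' cannot cover the other types, and your hypothesis does not exclude them. For instance, suppose $L$ meets a line $L_1$ with $\NNN_{L_1/X}\cong\OOO_{\PP^1}(1)\oplus\OOO_{\PP^1}(-2)$. Then $\NNN_{\widehat L_1/\widehat X}$ is the elementary modification of $\NNN_{L_1/X}$ at $L\cap L_1$ in the direction of $L$. It has degree $-2$ and $h^0\geq 1$, so it is of type $(0,-2)$ or $(1,-3)$.

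In that situation your intermediate claim that $\widetilde F$ is smooth is actually false. Suppose $\widetilde F$ were smooth along the flopped curve $\widetilde C_1$. Then $\widetilde F\cdot\widetilde C_1=-1$ and $K_{\widetilde F}\cdot\widetilde C_1=-\sigma^*H_W\cdot\widetilde C_1=-1$ make $\widetilde C_1$ a $(-1)$-curve on $\widetilde F$. The split normal bundle sequence from the proof of Proposition \ref{rm} then gives $\NNN_{\widetilde C_1/\widetilde X}\cong\OOO_{\PP^1}(-1)^{\oplus 2}$. So the flop would be Atiyah and $\widehat L_1$ would be a $(-1,-1)$-curve, a contradiction. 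Concretely, in a Reid pagoda of width $n$ the transform acquires an $A_{n-1}$ point on the flopped curve.

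What this direction really requires is that $\widetilde F$ has only isolated singularities along every flopped curve, for an arbitrary flop. Nothing in the proposal establishes this. Granting it, your remaining steps are fine: the isomorphism off $\widetilde D$, generic smoothness along $\Gamma$, and Serre's criterion.

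\textbf{The direction $(\Leftarrow)$.} Here the decisive step is only announced, and two ingredients are wrong.

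First, the sign is reversed: $\widehat F\cdot s=+1$, because $\varphi^*H\cdot s=1$ and $K_{\widehat X}\cdot s=0$. Equivalently, $\OOO_{\widehat F}(\widehat F)|_s$ is the subbundle $\OOO_{\PP^1}(1)\subset\NNN_{L/X}$. Hence $\widetilde F\cdot s^+<0$ and $s^+\subset\widetilde F$, the opposite of what you wrote.

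Second, there is no case $m\geq 2$. The relations $-K_{\widehat X}=\varphi^*H-\widehat F$, $\sigma^*H_W=\varphi^*H-2\widehat F$ and $-K_{\widetilde X}=\sigma^*(-K_W)-\widetilde D$ give $\widetilde F\sim\sigma^*(-K_W-H_W)-\widetilde D$ for either normal bundle. So $m=1$ always, and this comes from the class computation, not from a degree count.

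Moreover, $\NNN_{s/\widehat X}$ is an extension of $\OOO_{\PP^1}(1)$ by $\OOO_{\PP^1}(-3)$ and may be of type $(1,-3)$, where no pagoda model exists. In the Atiyah case your claim follows quickly. The inclusion $\NNN_{s/\widehat F}=\OOO(-3)\hookrightarrow\OOO(-1)^{\oplus 2}$ is given by two binary quadrics without common zero. So $\bar s\subset E\cong\PP^1\times\PP^1$ maps $2{:}1$ onto $s^+$, and the finite birational map $\widehat F\cong\bar F\to\widetilde F$ folds $s$ onto $s^+$.

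For arbitrary flops, the right tool is the contradiction argument you allude to, made precise along the lines of Proposition \ref{rm}:
\begin{enumerate}
\item If $F_L$ is normal, it is Du Val by Proposition \ref{prd}.
\item Then $\widetilde F$ is normal, because $\sigma^*F_L=\widetilde F+\widetilde D$ has multiplicity $2$ at a general point of a fibre over a double point.
\item $\widetilde F$ is crepant over $F_L$, since $K_{\widetilde F}=-\sigma^*H_W|_{\widetilde F}$. Hence its minimal resolution $S$ is a weak del Pezzo surface.
\item $S$ and $\widehat F$ both resolve the normalisation of the common image of $\widetilde F$ and $\widehat F$ on the base of the flop. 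Since $\widehat F$ contracts only $s$, it is the minimal resolution, so there is a birational morphism $S\to\widehat F$.
\item This forces $-K_{\widehat F}$ to be nef, which fails on $\mathbb{F}_3$ because $-K_{\mathbb{F}_3}\cdot s=-1$.
\end{enumerate}
Your version of this alternative lacks this mechanism. The computation in your first direction starts from $\widehat F\cong\mathbb{F}_1$, not from normality of $F_L$, so it cannot be reused here.
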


Recall from Theorem \ref{muk} that $V_{16}=\operatorname{SP}_6/P_3\subset\PP^{13}$ and $V_{18}=\G_2/P_2\subset\PP^{13}$, where $\operatorname{SP}_6$ and $\G_2$ denote the simple algebraic groups of types $\operatorname{C}_3$ and $\G_2$, respectively, and $P_3$ (respectively, $P_2$) denotes a maximal parabolic subgroup corresponding to the long simple root. Also recall that if $X_{2g-2}$ is a smooth anticanonically
embedded prime Fano threefold of genus $g=9$ or $g=10$, then $X_{2g-2}=V_{2g-2}\cap\PP^{g+1}$.

\begin{theorem}[{\cite[Theorem 4.2.7]{isk2}}]
\label{lot1}
Suppose that $X=X_{2g-2}\subset \PP^{g+1}$ is a smooth anticanonically embedded prime Fano threefold of genus $g=9$ or $g=10$. Then if $X$ is general, then the scheme $\mathrm{F}_1(X)$ is smooth and irreducible.
\end{theorem}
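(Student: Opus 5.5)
The plan is the standard incidence-variety argument applied to Mukai's description (Theorem \ref{muk}): smoothness comes from generic smoothness, and irreducibility from connectedness of a zero locus. Write $X=V\cap\Lambda$, where $V=V_{2g-2}\subset\PP^{13}$ and $\Lambda\subset\PP^{13}$ is a linear subspace of codimension $c$, with $c=3$ for $g=9$ and $c=2$ for $g=10$. Let $\mathcal{L}=\mathrm{F}_1(V)$ be the Hilbert scheme of lines on $V$. It is a homogeneous space.
\begin{itemize}
\item For $V_{18}=\G_2/P_2$, $\mathcal{L}\cong\G_2/P_1$, which is the $5$-dimensional quadric.
\item For $V_{16}=\operatorname{SP}_6/P_3$, $\mathcal{L}$ is the $7$-dimensional space of isotropic flags $U_2\subset U_2^{\perp}$.
\end{itemize}
Let $S$ be the rank-$2$ tautological bundle on $\mathcal{L}$, so that the fibre of $S$ at $[L]$ is the $2$-plane whose projectivisation is $L$. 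A linear form $h\in(\CC^{14})^\vee$ gives a section of $S^\vee$, and $L\subset\{h=0\}$ exactly when this section vanishes at $[L]$. Hence
\[
\mathrm{F}_1(X)=Z(s_\Lambda)\subset\mathcal{L},\qquad s_\Lambda\in H^0\bigl(\mathcal{L},(S^\vee)^{\oplus c}\bigr).
\]
The bundle $(S^\vee)^{\oplus c}$ has rank $2c$, which equals $\dim\mathcal{L}-1$ in both cases ($6=7-1$ and $4=5-1$). This matches the expected dimension $1$ from Theorem \ref{thl}(1).

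\textbf{Smoothness.} Consider the incidence variety $I=\{([L],\Lambda): L\subset\Lambda\}\subset\mathcal{L}\times\G(14-c,14)$. Since $S^\vee$ is globally generated by linear forms, the projection $I\to\mathcal{L}$ is a Grassmannian bundle: its fibres are the subspaces $\Lambda$ containing the fixed plane spanned by $L$. Therefore $I$ is smooth and irreducible. The fibre of the second projection $I\to\G(14-c,14)$ over $\Lambda$ is exactly $\mathrm{F}_1(V\cap\Lambda)$. By generic smoothness in characteristic zero, this fibre is smooth for general $\Lambda$. For such $\Lambda$ the section $V\cap\Lambda$ is also transversal, so $X$ is a smooth prime Fano threefold. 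This is the Bertini-type statement for zero loci of general sections of a globally generated bundle. By Theorem \ref{thl}(3), it equivalently says that every line on a general $X$ has normal bundle $\OOO_{\PP^1}\oplus\OOO_{\PP^1}(-1)$.

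\textbf{Irreducibility.} Since $\mathrm{F}_1(X)$ is now smooth, it suffices to prove it is connected, i.e. $h^0(\OOO_{\mathrm{F}_1(X)})=1$. I would resolve $\OOO_{Z}$ by the Koszul complex of $E=(S^{\vee})^{\oplus c}$:
\[
0\to\Lambda^{2c}E^\vee\to\dots\to E^\vee\to\OOO_{\mathcal{L}}\to\OOO_Z\to0 .
\]
It then suffices to show $H^{i}(\mathcal{L},\Lambda^{i}E^\vee)=0$ for $1\le i\le 2c$. Each $\Lambda^iE^\vee$ is a sum of homogeneous bundles of the form $\operatorname{Sym}^a S\otimes(\det S)^b$. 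Their cohomology is computable by the Borel--Weil--Bott theorem on $\G_2/P_1$ and on the isotropic flag variety of $\operatorname{SP}_6$.

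\textbf{Main obstacle.} The hard step is this Bott vanishing computation. The dangerous terms are the top exterior powers, where $H^{\dim\mathcal{L}}$-type contributions might appear in the degree relevant for $H^0(\OOO_Z)$. If the vanishing turns out to be messy, my fallback is a Lefschetz-type argument. If $S^\vee$ is ample, which it should be on these homogeneous spaces since $\det S^\vee$ is ample and $S^\vee$ is generated by sections separating the relevant directions, then Sommese's theorem shows that the zero locus of a section of an ample bundle of rank $r<\dim\mathcal{L}$ is connected. Here $r=\dim\mathcal{L}-1$, so this gives connectedness directly. A second fallback is to exhibit one special smooth $X$ with connected $\mathrm{F}_1(X)$, for instance one with a large symmetry group, and deduce connectedness of the general fibre via Stein factorisation of the flat family over the locus of smooth fibres.
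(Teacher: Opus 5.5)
The paper does not prove Theorem~\ref{lot1}; it simply quotes \cite[Theorem 4.2.7]{isk2}. Your argument is therefore an independent route, and its main line is sound. The incidence variety is a Grassmannian bundle over the homogeneous space $\mathcal{L}$, so generic smoothness gives a smooth $\mathrm{F}_1(X)=Z(s_\Lambda)$ for general $\Lambda$. Since $s_\Lambda$ is regular by Theorem~\ref{thl}(1), the Koszul spectral sequence reduces connectedness to the vanishings $H^i(\Lambda^iE^\vee)=0$ for $1\le i\le 2c$.

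The step you left open does go through.

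\textbf{Case $g=10$.} Here $\mathcal{L}=\G_2/P_1\cong Q^5$. The Pl\"ucker weight of a line is $\omega_2+(\omega_2-\alpha_2)=3\omega_1$, with $\alpha_2$ the long simple root. Hence $\det S=\OOO(-3)$, and $S^\vee$ is the irreducible homogeneous bundle attached to $\omega_2$, with sections $\mathfrak{g}_2^\vee$. Bott's theorem then gives:
\begin{itemize}
\item $S$ and $\det S$ are acyclic;
\item $\operatorname{Sym}^2S$ has cohomology only in degree $3$;
\item $S\otimes\det S$ has cohomology only in degree $4$;
\item $(\det S)^2=\OOO(-6)$ has cohomology only in degree $5$.
\end{itemize}

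\textbf{Case $g=9$.} The identity $\Lambda^3U_3=\Lambda^2U_2\otimes U_3/U_2$ gives $S\cong\mathcal{W}(-1)$ with $\mathcal{W}=\mathcal{U}^\perp/\mathcal{U}$. The summands of $\Lambda^i(S\otimes\CC^3)$ are therefore $\operatorname{Sym}^{a-b}\mathcal{W}(-a-b)$ with $3\ge a\ge b\ge 0$ and $a+b=i$. Their $\lambda+\rho$ is $(3-i,\,2-i,\,1+a-b)$. This is singular except for $(a,b)=(3,1)$ and $(3,3)$, where it equals $(-1,-2,3)$, of length $5$, and $(-3,-4,1)$, of length $7$.

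So all required groups vanish. The top exterior power feeds only $H^1(\OOO_Z)$, never $H^0$, so the danger you anticipated does not occur.

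Your route also buys more than the citation. Regularity of the section holds for every smooth $X$, so you get connectedness of $\mathrm{F}_1(X)$ for all $X$; this is the first half of Lemma~\ref{lg}. The Euler characteristic of the Koszul complex gives $p_a(\mathrm{F}_1(X))=1+2+7=10$ for $g=10$ and $3+14=17$ for $g=9$. The latter is exactly the value Lemma~\ref{lg} imports from \cite{isk2}.

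One claim in your fallback is false: $S^\vee$ is not ample. For $x\in V$, the lines through $x$ form a curve in $\mathcal{L}$ when $g=10$, and a $\PP^2$ when $g=9$. Along this locus $S$ contains the constant line $\langle x\rangle$, so $S^\vee$ has a trivial quotient there. Equivalently, the morphism $\PP(S)\to V$ given by the tautological bundle has fibres of dimension $k=1$, respectively $k=2$. Sommese's theorem for $k$-ample bundles would need $\dim\mathcal{L}-\operatorname{rk}E\ge k+1$, but here the difference is $1$, so it yields nothing. Connectedness therefore genuinely rests on the Bott computation. Your other fallback, an explicit special fibre plus semicontinuity of $h^0$ in the flat family of zero loci, would also work, but you did not supply such a fibre.
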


We shall provide a construction that is inverse to that of Theorem \ref{ic}. First, let us introduce some notation and known facts.

\begin{lemma}[{\cite[Lemma 2.1 and Lemma 2.4]{kis}}]
\label{lc}
\begin{itemize}
\item 
    Any smooth curve $\Gamma$ of genus $3$ and degree $7$ in $\PP^3$ lies on a unique irreducible cubic surface $F=F(\Gamma)$ in $\PP^3.$           
\item
    Suppose that $\Gamma$ is a smooth curve of genus $2$ and degree $7$ in $\PP^4$. Assume that $\Gamma$ is linearly non-degenerate, i.e. $\Gamma$ is not contained in any hyperplane of $\PP^4$. Then the quadrics containing $\Gamma$ form a pencil, and a general quadric of the pencil is smooth. The base locus of the pencil is an irreducible surface $F=F(\Gamma)$ of degree $4$ in $\PP^4.$
\end{itemize}
\end{lemma}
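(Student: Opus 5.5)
The plan is a Riemann--Roch dimension count followed by a case analysis excluding degenerate configurations, with liaison as the main tool. Throughout, $\Gamma$ is smooth and irreducible, and $\mathcal{I}_\Gamma$ denotes its ideal sheaf.

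\emph{Genus $3$, degree $7$ in $\PP^3$.} Since $\deg \OOO_\Gamma(3)=21>2g-2=4$, the bundle is non-special, so $h^0(\OOO_\Gamma(3))=21-3+1=19<20=h^0(\OOO_{\PP^3}(3))$. Hence $\Gamma$ lies on at least one cubic. Next I show that $\Gamma$ lies on no quadric.
\begin{itemize}
\item A plane curve of degree $7$ has genus $15$, so $\Gamma$ is not planar.
\item On a smooth quadric, a curve of bidegree $(a,b)$ with $a+b=7$ has genus $(a-1)(b-1)\in\{0,5,8,9\}$, never $3$.
\item On a quadric cone the analogous computation (curves of degree $2k$ or $2k+1$) also never gives degree $7$ and genus $3$.
\end{itemize}
Irreducibility follows: a reducible cubic is a plane plus a quadric, and the irreducible curve $\Gamma$ would lie in one of the two components. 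For uniqueness, suppose two distinct cubics contain $\Gamma$. They then link $\Gamma$ to a curve $C'$ of degree $9-7=2$. The liaison formula
$$g(\Gamma)-p_a(C')=\tfrac{1}{2}(7-2)(3+3-4)$$
gives $p_a(C')=-2$, and the exact sequence $0\to\mathcal{I}_{\Gamma\cup C'}\to\mathcal{I}_\Gamma\to\omega_{C'}(-2)\to 0$ relates $h^0(\mathcal{I}_\Gamma(3))$ to $h^0(\omega_{C'}(1))$. The step I expect to be the main obstacle is ruling out this residual, which could be a double line of genus $-2$ (a multiplicity-two structure on a line). My first attempt is to show $h^1(\mathcal{I}_\Gamma(3))=0$, for instance via Castelnuovo--Mumford regularity of $\Gamma$. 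If that fails, I would compute $h^0(\mathcal{I}_\Gamma(3))$ through the linkage sequence and show it equals $1$.

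\emph{Genus $2$, degree $7$ in $\PP^4$.} Here $h^0(\OOO_\Gamma(2))=14-2+1=13$, so $h^0(\mathcal{I}_\Gamma(2))\geqslant 15-13=2$. To bound it from above, I restrict to a general hyperplane: $\Gamma\cap\PP^3$ is $7$ points in uniform position. Because $\Gamma$ is non-degenerate, $h^0(\mathcal{I}_\Gamma(1))=0$, and the restriction sequence gives $h^0(\mathcal{I}_\Gamma(2))\leqslant 3$.

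To exclude a net of quadrics, I split by the dimension of its base locus.
\begin{itemize}
\item If the net is a complete intersection curve, it has degree $8$, so it links $\Gamma$ to a line $\ell$. The liaison formula reads $g(\Gamma)-g(\ell)=\tfrac{1}{2}(7-1)(2+2+2-5)=3$, which forces $g(\Gamma)=3\neq 2$, a contradiction.
\item If the base locus contains a surface $S$ through $\Gamma$, then $S$ has degree at most $3$. A surface of degree $\leqslant 2$ spans at most a $\PP^3$, contradicting non-degeneracy. A non-degenerate cubic surface in $\PP^4$ is a cubic scroll $S(1,2)$ or a cone over a twisted cubic. In both cases a class computation, for instance writing $\Gamma\sim aH+bF$ on $S(1,2)$, shows that no smooth curve of degree $7$ and genus $2$ lies on it.
\end{itemize}
Hence the quadrics through $\Gamma$ form exactly a pencil.

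It remains to prove the two properties of the pencil.
\begin{itemize}
\item \emph{The general quadric is smooth.} If every member were singular, Bertini's theorem shows that all members share a singular point, which lies in the base locus. A common vertex would make the base surface $F$ a cone over a quartic curve in $\PP^3$. Projecting $\Gamma$ from this vertex then gives a contradiction with degree $7$ and genus $2$.
\item \emph{The base locus $F$ is an irreducible quartic surface.} Its degree is $2\cdot 2=4$. If $F$ were reducible, $\Gamma$ would lie in a component of degree $\leqslant 3$, and the cubic-surface exclusion above applies again.
\end{itemize}
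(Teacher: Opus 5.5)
The paper gives no proof of this lemma; it is quoted from \cite{kis}. So your argument has to stand on its own, and it has two genuine gaps.

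Much of it is sound: the Riemann--Roch counts, the exclusion of quadrics in $\PP^3$, the bound $h^0(\mathcal{I}_\Gamma(2))\leqslant 3$, the exclusion of a net, and the common-vertex case. One small slip: the genera $(a-1)(b-1)$ with $a+b=7$ are $0,4,6$, not $0,5,8,9$, but they still never equal $3$.

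\textbf{First gap: uniqueness of the cubic.} Neither of your routes can work, because for hyperelliptic $\Gamma$ the first bullet, as printed for arbitrary smooth $\Gamma$, is false. Let $V\subset H^0(\OOO_\Gamma(1))$ be the $4$-dimensional image of $H^0(\OOO_{\PP^3}(1))$ in the $5$-dimensional space $H^0(\OOO_\Gamma(1))$, and let $\Delta$ be the $g^1_2$. Take a linear form with kernel $V$ and compose it with multiplication $H^0(\OOO_\Gamma(1)(-\Delta))\otimes H^0(\Delta)\to H^0(\OOO_\Gamma(1))$. This gives a $3\times 2$ matrix, so some $s\neq 0$ satisfies $s\cdot H^0(\Delta)\subset V$. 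The degree-$5$ divisor of $s$ then lies on a line $\ell$.
\begin{itemize}
\item Every cubic through $\Gamma$ contains $\ell$.
\item $h^0(\OOO_{\Gamma\cup\ell}(3))=19+4-5=18$, the $H^1$ vanishing because $H^0(\OOO_\Gamma(3))$ surjects onto the length-$5$ scheme $\Gamma\cap\ell$.
\item Hence $h^0(\mathcal{I}_\Gamma(3))\geqslant 2$.
\end{itemize}
Your genus $-2$ double line is exactly the residual here.

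Both of your routes therefore fail. Regularity (Gruson--Lazarsfeld--Peskine) only kills $h^1(\mathcal{I}_\Gamma(n))$ for $n\geqslant 5$. Once two cubics are assumed, the linkage sequence gives $h^0(\mathcal{I}_\Gamma(3))=2+h^0(\omega_{C'}(1))$, so using it to show the value is $1$ is circular.

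The missing idea is non-hyperellipticity, which is the hypothesis of Notation \ref{nc} under which the paper actually applies the lemma. Since $\mathcal{I}_\ell^2\subset\mathcal{I}_{C'}$ and $\deg(\Gamma\cap C')=2\cdot 7-(2\cdot 3-2)=10$, you get $\deg(\Gamma\cap\ell)\geqslant 5$. Then $\OOO_\Gamma(1)(-\Gamma\cap\ell)$ has degree $\leqslant 2$ and two independent sections, which forces $\Gamma$ to be hyperelliptic.

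\textbf{Second gap: smoothness of the general quadric.} Your step ``if every member is singular, Bertini gives a common singular point'' is wrong. Bertini only places the singular points of a general member in the base locus, and singular pencils can have a moving vertex. For example, $\lambda(x_0x_1+x_3^2)+\mu(x_0x_2+x_4^2)$ is singular exactly at $(0:\mu:-\lambda:0:0)$ for general $(\lambda:\mu)$. Its base locus is an irreducible quartic, the image of $\PP^2$ under $(s:u:v)\mapsto(s^2:-u^2:-v^2:su:sv)$, singular along a line.

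To close this case you need the Kronecker classification of singular symmetric $5\times 5$ pencils. With no common kernel vector there are two possibilities.
\begin{itemize}
\item There is one block of minimal index $2$. Then the base locus contains a plane and $\Gamma$ lies on the residual cubic, which your scroll/cone computation excludes.
\item There is a block of index $1$ plus a regular $2\times 2$ part. Then either the base locus again contains a plane, or $F$ is the surface above. On that surface every curve other than the double line is the birational image of a plane curve of degree $k$ and so has degree $2k$, which is incompatible with $\deg\Gamma=7$.
\end{itemize}

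A minor point in the net exclusion: the surface in the base locus need not contain $\Gamma$. In that case $\Gamma$ lies on the residual surface of $Q_1\cap Q_2$, which still has degree $\leqslant 3$, so your argument survives.
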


\begin{notation}
\label{nc}
Further, we will consider the following two cases:
\begin{itemize}
\item
    Case $g=9$. Let $W=\PP^3$ and let $\Gamma\subset W$ be a smooth non-hyperelliptic curve of genus $3$ and degree $7$.
\item 
    Case $g=10$. Let $W=Q\subset\PP^4$ be a smooth quadric and let $\Gamma$ be a smooth curve of genus $2$ and degree $7$ on $Q$.
\end{itemize}

In each case $F=F(\Gamma)$ denotes the corresponding surface of Lemma \ref{lc}.
\end{notation}

\begin{proposition}[{\cite[Proposition 2.2]{kis}}]
    \label{prd}
    In the assumptions of Notation $\ref{nc}$, the surface $F=F(\Gamma)\subset\PP^{g-6}$ belongs to one of the following classes:
\begin{itemize}
\item
    $F \subset \PP^{g-6}$ is a normal del Pezzo surface with at worst Du Val singularities.
\item
    $F \subset \PP^{g-6}$ is a ruled surface and the singular locus $\Lambda=\Sing(F)$ is a line. In particular, the surface $F$ is not normal.
\end{itemize}
\end{proposition}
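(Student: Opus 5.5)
This proposition is quoted from \cite[Proposition 2.2]{kis}, and I would prove it by studying the geometry of $F=F(\Gamma)$ from Lemma \ref{lc} directly.

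In the case $g=9$, $F$ is an irreducible cubic surface in $\PP^3$. In the case $g=10$, $F$ is an irreducible complete intersection of two quadrics in $\PP^4$. In both cases $F$ is Gorenstein with $\omega_F\cong\OOO_F(-1)$ by adjunction; for $g=10$ this holds because $F$ is a complete intersection. Moreover, $F$ is not a cone: a cone cannot contain a smooth curve of genus $3$ (respectively, $2$) and degree $7$ spanning the ambient space with these invariants. I would check this by projecting from the vertex, which would make $\Gamma$ a multisection of controlled degree over a plane cubic (respectively, a quartic elliptic curve) and contradict the genus via Riemann--Hurwitz or Castelnuovo bounds.

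The proof then splits according to whether $F$ is normal.

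\textbf{Normal case.} A normal Gorenstein surface with $-K_F$ ample is either a normal del Pezzo surface with at worst Du Val singularities or a cone over an elliptic curve; this is the Hidaka--Watanabe classification. Cones have been excluded, so $F$ is a Du Val del Pezzo surface.

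\textbf{Non-normal case.} I would invoke the classification of non-normal irreducible cubic surfaces and non-normal degree $4$ del Pezzo surfaces, due to Reid and to Abe--Furushima. Each such surface is either a cone or a projection of a rational normal scroll. A non-normal, non-conical such surface has a line $\Lambda$ as its singular locus. For cubics, these are the ruled cubics with a double line. For quartic complete intersections, the double locus is a line or a conic. I would exclude the remaining degenerate possibilities, such as a double conic or a singular locus that is not a line, by the same curve-containment argument: compute whether a smooth curve of genus $g-6$ and degree $7$ can lie on each surface, using the divisor class group of the normalization (a Hirzebruch surface) and the genus formula.

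The main obstacle is this last exclusion for $g=10$. It requires pulling $\Gamma$ back to the normalization $\mathbb{F}_n$, writing the class of its strict transform as $aC_0+bf$, and imposing degree $7$, arithmetic genus $2$ and smoothness of the image. This shows only the configuration whose singular locus is a line survives. I would first write the conductor explicitly for each surface type and then do this numerical check.
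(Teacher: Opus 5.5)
Your overall frame is the right one: $F$ is Gorenstein with $\omega_F\cong\OOO_F(-1)$, you use Hidaka--Watanabe in the normal case, you use the Reid / Abe--Furushima classification in the non-normal case, and you then test each case against $\Gamma$. The paper does not prove this proposition itself; it quotes it from \cite{kis}. However, your non-normal list is wrong for $g=10$. A non-normal, non-conical Gorenstein del Pezzo surface is a projection of a surface of minimal degree. In degree $4$ this includes the Veronese surface, not only scrolls. Project $v_2(\PP^2)\subset\PP^5$ from a point on the plane of one of its conics, e.g.\ via $(x^2-y^2:z^2:xy:xz:yz)$. The image is the complete intersection $u_1u_2-u_3u_4=u_0u_1-u_3^2+u_4^2=0$ in $\PP^4$. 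Its singular locus is a line (the image of $z=0$, covered twice), but it contains no other line. So it is not swept out by lines, and your claim that every such $F$ is a cone or a projection of a scroll fails. Your Hirzebruch-surface bookkeeping never sees this case, because its normalization is $\PP^2$. It is excluded in one line: $\OOO_F(1)$ pulls back to $\OOO_{\PP^2}(2)$, so every curve on $F$ has even degree, whereas $\deg\Gamma=7$. Conversely, the ``main obstacle'' you single out does not exist. On the normalization $\nu\colon\widetilde F\to F$ the conductor satisfies $\widetilde C\sim -K_{\widetilde F}-\nu^*H$. Since hyperplane sections of $\widetilde F$ are rational normal curves, $\widetilde C$ has $\nu^*H$-degree $(d+2)-d=2$, and it covers the non-normal locus twice. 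So that locus is always a line, and there is no double-conic case to rule out.

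Your cone exclusion also does not work as described. For $g=10$ it is pure divisibility: projecting from the vertex $v$ onto the base quartic $E$ gives $4\deg(\Gamma\to E)=7-\operatorname{mult}_v\Gamma\in\{6,7\}$, which is impossible. For $g=9$ you get a double cover $\Gamma\to E$ of the base cubic, and neither Riemann--Hurwitz nor Castelnuovo's bound yields a contradiction.
\begin{itemize}
\item If $E$ is smooth, you need the class computation on $\PP_E(\OOO_E\oplus\OOO_E(-1))$. There the strict transform of $\Gamma$ is $2C_0+7f$ with $C_0^2=-3$, which has genus $5$, a contradiction.
\item If $E$ is nodal or cuspidal, the same computation on $\mathbb{F}_3$ gives genus exactly $3$, and smooth such curves do exist. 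What excludes them is that $\Gamma\to\PP^1$, through the normalization of $E$, has degree $2$. So $\Gamma$ is hyperelliptic, which contradicts the hypothesis of Notation~\ref{nc} that your plan never invokes. Alternatively, you may simply keep these cones, since they are ruled with singular locus a line.
\end{itemize}
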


\begin{theorem}[{\cite[Theorem 2.6]{kis}}]
\label{thc}
    In the assumptions of Notation \ref{nc}, there is a Sarkisov link:
\begin{equation*}
\centerline{
\xymatrix{
&\widetilde D\ar[dl]\ar@{^{(}->}[rr]&&\widetilde
X\ar[dl]_{\sigma}\ar@
{-->}[rr]^{\chi^{-1}}&
& \widehat X\ar[dr]^{\varphi}&&\widehat
F\ar[dr]\ar@{_{(}->}[ll]
\\
\Gamma\ar@{^{(}->}[rr]&&W\ar@{-->} [rrrr]_{\psi^{-1}}&&&&X&&
L\ar@{_{(}->}[ll] }}
\end{equation*}
where $\sigma$ is the blow-up of $\Gamma$, $\chi$ is a flop, $X=X_{2g-2}$ is a smooth prime Fano threefold of genus $g$ and $\varphi$ is the blow-up of a line $L$ on~$X$. The exceptional divisor $\widehat F$ of the morphism $\varphi$ is the proper transform of the surface $F=F(\Gamma)\subset W$, and $\widetilde{D}\subset\widetilde{X}$ is the exceptional divisor of the morphism $\sigma$. This link is inverse to that of Theorem \ref{ic}. Moreover, $L\subset D$, where $D$ is the proper transform of $\widetilde{D}$ in $X$.
\end{theorem}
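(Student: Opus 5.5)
The plan is to construct the link from the $W$ side by running the two-ray game on $\widetilde X=\Bl_\Gamma W$ and identifying each step numerically. Let $\sigma:\widetilde X\to W$ be the blow-up of $\Gamma$, let $\widetilde D$ be its exceptional divisor, and let $H$ be the pull-back of the hyperplane class. Then
$$-K_{\widetilde X}=4H-\widetilde D\quad (g=9),\qquad -K_{\widetilde X}=3H-\widetilde D\quad (g=10).$$
The standard formula $(-K_{\widetilde X})^3=(-K_W)^3-2(-K_W\cdot\Gamma)+2p_a(\Gamma)-2$ gives $64-56+4=12$ for $g=9$ and $54-42+2=14$ for $g=10$. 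These equal $(2g-2)-2-2$, the anticanonical degree of the blow-up of a line on $X_{2g-2}$, which is a consistency check for the whole construction.

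\textbf{Step 1: $-K_{\widetilde X}$ is nef and big, and its anticanonical morphism is small.} I would show that $\Gamma$ is cut out scheme-theoretically by quartics in $\PP^3$, respectively by cubics on $Q$. For this I would use Castelnuovo--Mumford regularity of $\mathcal I_\Gamma$, using that $\Gamma$ is non-hyperelliptic, respectively linearly non-degenerate. Then $|-K_{\widetilde X}|$ is base point free, and since its top self-intersection is positive it is big. A curve is contracted by this linear system exactly when it has $-K_{\widetilde X}$-degree zero. Such curves are proper transforms of lines that are $4$-secant to $\Gamma$ (for $g=9$), respectively lines in $Q$ that are $3$-secant to $\Gamma$ (for $g=10$), together possibly with curves inside $\widetilde D$. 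By Lemma \ref{lc} and Proposition \ref{prd}, all such secant lines lie on $F$, and only finitely many occur. A secant line on $F$ either passes through a singular point of $F$ or lies on its ruling; I would check that its class cannot cover a divisor. Hence the anticanonical morphism contracts finitely many curves and is small, and $\widetilde X$ is a weak Fano threefold of Picard rank $2$.

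\textbf{Step 2: the flop and the second contraction.} By Kawamata's theory the flop $\chi^{-1}:\widetilde X\dashrightarrow\widehat X$ of these curves exists. The threefold $\widehat X$ is smooth (Kollár: flops of smooth threefolds are smooth), $-K_{\widehat X}$ is nef and big, and $\widehat X$ has the same degree and Picard rank $2$. The other $K$-negative extremal ray of $\widehat X$ yields a Mori contraction $\varphi$. I would identify the divisor it contracts as the proper transform $\widehat F$ of $F$, where
$$F\sim 3H-\widetilde D\quad(g=9),\qquad F\sim 2H-\widetilde D \ \text{ restricted to } Q\quad(g=10).$$
To see this, compute on $\widetilde X$ (intersection numbers of divisors do not change under the flop) that $(-K)^2\cdot F=2$ and $(-K)\cdot F^2=-3$, and that $F$ is $\chi$-negative in the appropriate sense. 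Among Mori's types (E1--E5, conic bundles, del Pezzo fibrations), these numbers are only compatible with E1, the blow-up of a smooth curve $L$ on a smooth threefold $X$. The formulae $-K_{\widehat X}^2\cdot\widehat F=-K_X\cdot L+2-2p_a(L)$ and $(-K_{\widehat X})^3=(-K_X)^3-2(-K_X\cdot L)+2p_a(L)-2$ then force $p_a(L)=0$, $-K_X\cdot L=1$ and $(-K_X)^3=2g-2$.

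\textbf{Step 3: $X$ is prime of genus $g$, and $L\subset D$.} Since $\rho(X)=1$ and $-K_X\cdot L=1$, the anticanonical class is primitive, and $-K_X$ is ample because $-K_{\widehat X}$ is nef and big and $\varphi$ contracts a $K$-negative ray. Hence $\Pic X=\ZZ[-K_X]$ and $X$ is a smooth prime Fano threefold of genus $g$ containing the line $L$. Running Theorem \ref{ic} from $L$ recovers $W$ and $\Gamma$, which shows the two links are mutually inverse. For the inclusion $L\subset D$, compute $\widetilde D\cdot(\text{fibre of }\widehat F)>0$ on $\widehat X$, so the image $D$ of $\widetilde D$ meets every fibre of $\varphi$; as $D$ is irreducible and contains the images of these fibres, $L\subset D$.

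The main obstacle is Step 1: proving that $\Gamma$ is scheme-theoretically cut out in the right degree and that no divisor is $K$-trivial. This is especially delicate when $F$ is the non-normal ruled surface of Proposition \ref{prd}, where a $1$-dimensional family of lines meets the singular line $\Lambda$. There I would analyse the ruling of $F$ directly and compute the intersection number of $\Gamma$ with a ruling line.
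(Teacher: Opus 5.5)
The paper gives no proof of this statement; it is imported from \cite[Theorem 2.6]{kis}. Your outline is the standard two-ray-game argument on $\Bl_\Gamma W$, which is the right strategy. As written, however, Step 2 contains a numerical error that breaks the identification of the second contraction, and the step you call the main obstacle is left open, although a short argument closes it.

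\textbf{Step 2: the intersection numbers are wrong.} Use $H\cdot\widetilde D^2=-7$ and $\widetilde D^3=-\deg\NNN_{\Gamma/W}$, which equals $-32$ for $g=9$ and $-23$ for $g=10$. Use also $\widetilde F\sim -K_{\widetilde X}-H$, which holds in both cases. Then one gets uniformly
\[
(-K_{\widetilde X})^2\cdot\widetilde F=3,\qquad (-K_{\widetilde X})\cdot\widetilde F^2=-2 .
\]
With your values $2$ and $-3$, nothing fits. Every divisorial type has $(-K)\cdot E^2$ equal to $2p_a-2$ (type E1) or to $-2$ (types E2--E5), so the value $-3$ is impossible, and $p_a(L)=0$, $-K_X\cdot L=1$ does not follow.

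With the correct values, E2, E3/E4 and E5 are excluded, since they give $(-K)^2\cdot E=4,2,1$. Type E1 then yields $2p_a(L)-2=-2$ and $-K_X\cdot L+2=3$.

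You should also justify that the second contraction is divisorial with exceptional divisor $\widehat F$. Since $-K_{\widehat X}$ is nef and $(-K_{\widehat X})\cdot\widehat F^2<0$, the divisor $\widehat F$ is not nef. It is positive on the flopped curves, so it must be negative on the other ray.

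Your remark that intersection numbers do not change under the flop is false in general: for $g=9$ one has $\widetilde F^3=-4$ but $\widehat F^3=1$. Only numbers involving $K$ are preserved, and those are all you need.

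\textbf{Step 1: smallness.} Your claim that the only $K$-trivial curves are transforms of $4$-secant (resp.\ $3$-secant) lines is not justified; for instance, conics are not excluded. The proposed analysis via rulings of the non-normal surface is vague. None of this is needed.

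Let $\widetilde C$ be a $K$-trivial curve. Then $H\cdot\widetilde C>0$, because fibres of $\widetilde D\to\Gamma$ have $-K$-degree $1$. Hence $\widetilde F\cdot\widetilde C=-H\cdot\widetilde C<0$, so $\widetilde C\subset\widetilde F$. Since $\widetilde F$ is irreducible and $(-K_{\widetilde X}|_{\widetilde F})^2=3>0$, the anticanonical morphism is generically finite on $\widetilde F$. It therefore contracts only finitely many curves and is small, whether or not $F$ is normal.

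Your regularity argument for base-point-freeness works, but name its input. The vanishing $h^1(\mathcal I_\Gamma(3))=0$ (resp.\ $h^1(\mathcal I_{\Gamma/\PP^4}(2))=0$) is equivalent, by the counts $20-19=1$ and $15-13=2$, to $\Gamma$ lying on exactly one cubic (resp.\ on exactly a pencil of quadrics). That is essentially Lemma \ref{lc}, and it is where the hypotheses of Notation \ref{nc} enter.

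\textbf{Step 3} then goes through. One finds $\widehat D\sim 3\varphi^*H_X-7\widehat F$ (resp.\ $2\varphi^*H_X-5\widehat F$). Hence $\widehat D\cdot f=7$ (resp.\ $5$) for every fibre $f$ of $\widehat F\to L$, which gives $L\subset D$.
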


\begin{remark}
    Note that if $\Gamma\subset\PP^3$ is a hyperelliptic curve of genus $3$ and degree $7$, then $-K_{\widetilde{X}}$ is not numerically effective, where $\widetilde{X}=\Bl_{\Gamma}(\PP^3)$ (see \cite[Remark 4.3.9(ii)]{isk2}), so the construction of Theorem \ref{thc} is not applicable in this case. Also note that for any (non-hyperelliptic) curve of genus $3$ there is an embedding of degree $7$ into $\PP^3$.
\end{remark}

\begin{remark}[{\cite[Proposition 2.12(4)]{kis}}]
\label{rm4}
     In the notation of Theorem \ref{thc}, each flopping curve $\widetilde{C}\subset\widetilde{X}$ is the proper transform of a $(13-g)$-secant line of $\Gamma$.
\end{remark}

\begin{corollary}[{\cite[Corollary 2.11]{kis}}]
\label{cc}
In the notation of Theorem \ref{thc}, we have
$$X\setminus D\simeq W\setminus F,$$ where $D\subset X$ is the proper transform of the divisor $\widetilde{D}$.
\end{corollary}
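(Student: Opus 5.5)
The plan is to compare the two sides of the Sarkisov link of Theorem \ref{thc} step by step, removing the divisors $\widetilde D\cup\widetilde F$ on $\widetilde X$ and $\widehat D\cup\widehat F$ on $\widehat X$, and checking that each map in the link becomes an isomorphism on these complements. Here $\widehat D$ and $\widetilde F$ denote proper transforms under $\chi$ and $\chi^{-1}$, respectively.

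\emph{Step 1: the blow-up $\sigma$.} Since $\Gamma\subset F$ by Lemma \ref{lc}, the blow-up $\sigma$ restricts to an isomorphism
$$\widetilde X\setminus(\widetilde D\cup\widetilde F)\cong W\setminus(\Gamma\cup F)=W\setminus F.$$
Call this open set $U$. We note that $F$ is an ample divisor on $W$: it is a cubic surface in $\PP^3$, or a quadric section of $Q$ for $g=10$. Hence $U$ is affine.

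\emph{Step 2: the flopping curves lie in $\widetilde F$.} By Remark \ref{rm4}, each flopping curve is the proper transform of a $(13-g)$-secant line $\ell$ of $\Gamma$.
\begin{itemize}
\item For $g=9$, $\ell$ meets the cubic $F$ in at least $4$ points, so $\ell\subset F$.
\item For $g=10$, $\ell$ is a trisecant line, so it meets every quadric of the pencil containing $\Gamma$ in $3$ points. Hence $\ell$ lies in every such quadric, and therefore in their base locus $F$.
\end{itemize}
Since $\ell\not\subset\Gamma$, its proper transform lies in $\widetilde F$. Consequently $\chi$ is defined and is an isomorphism onto its image on $U$. So $\chi|_U$ is an open embedding of $U$ into $\widehat X\setminus(\widehat D\cup\widehat F)$, whose image is the complement of the flopped curves.

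\emph{Step 3: no flopped curve meets $\widehat X\setminus(\widehat D\cup\widehat F)$.} This is the step I expect to be the main obstacle, because a direct intersection-number argument only shows that flopped curves meet $\widehat F$, not that they lie in $\widehat D\cup\widehat F$. Instead I will use affineness. The image $\chi(U)$ is affine and open in the smooth variety $\widehat X\setminus(\widehat D\cup\widehat F)$. Its complement there is contained in the union of flopped curves, so it has codimension $\geqslant 2$. But the complement of an affine open subset of a normal (here smooth) variety is of pure codimension one. Hence this complement is empty, and
$$U\cong\widehat X\setminus(\widehat D\cup\widehat F).$$

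\emph{Step 4: the blow-up $\varphi$.} Finally, $\widehat F$ is the $\varphi$-exceptional divisor over $L$, so
$$\widehat X\setminus(\widehat D\cup\widehat F)\cong X\setminus(D\cup L).$$
Since $L\subset D$ by Theorem \ref{thc}, this equals $X\setminus D$. Combining Steps 1–4 gives $X\setminus D\simeq W\setminus F$.
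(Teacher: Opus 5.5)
Your argument is correct, but Steps 2--3 take a different route from the paper. The paper treats both sides of the flop symmetrically using intersection numbers. By Remarks \ref{rm3} and \ref{rm4}, the map $\widehat X\dashrightarrow\widetilde X$ is a $(-\widehat F)$-flop and its inverse is a $(-\widetilde D)$-flop. Hence $\widetilde F\cdot\widetilde C<0$ on flopping curves and $\widehat D\cdot\widehat C<0$ on flopped curves, so these curves lie in $\widetilde F$ and $\widehat D$ respectively. The flop then identifies $\widetilde X\setminus(\widetilde F\cup\widetilde D)$ with $\widehat X\setminus(\widehat F\cup\widehat D)$ directly.

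You instead put the flopping curves into $\widetilde F$ by B\'ezout, and you never locate the flopped curves. Affineness of $W\setminus F$ (from ampleness of $F$) together with purity of the complement of an affine open subset of a normal separated variety (Hartogs on an affine chart) rules out the codimension-$\geqslant 2$ leftover. This is valid.

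Your worry in Step 3 is unnecessary, though: the intersection-number argument works if you use $\widetilde D$ instead of $\widehat F$. Since $-K_{\widetilde X}\cdot\widetilde C=0$, you get $\widetilde D\cdot\widetilde C=-K_W\cdot\sigma(\widetilde C)=13-g>0$. After the flop this gives $\widehat D\cdot\widehat C<0$, so $\widehat C\subset\widehat D$.

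Your route needs only Remark \ref{rm4} and elementary geometry. The cost is that it uses ampleness of $F$ on $W$ (harmless here, as $\operatorname{Pic}W=\ZZ$), and it only shows the flopped curves lie in $\widehat D\cup\widehat F$. The paper's route is intrinsic to the link and gives the sharper containment in $\widehat D$. Notationally, your $\chi$ is the paper's $\chi^{-1}$.
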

\begin{proof}
    Note that any of the flopping curves $\widetilde{C}\subset\widetilde{X}$ and any of the flopped curves $\widehat{C}\subset\widehat{X}$ lie on $\widetilde{F}$ and $\widehat{D}$, respectively, where $\widetilde{F}\subset\widetilde{X}$ and $\widehat{D}\subset\widehat{X}$ are the proper transforms of $\widehat{F}$ and $\widetilde{D}$, respectively. Indeed, by Remark \ref{rm3} and Remark \ref{rm4}, the flop $\chi$ is a $-\widehat{F}$-flop and the flop $\chi^{-1}$ is a $-\widetilde{D}$-flop (see e.g. \cite[Definition 1.4.13, Theorem 1.4.15 and Remarks 1.4.16]{isk2}), whence the intersection numbers $\widetilde{F}\cdot \widetilde{C}$ and $\widehat{D}\cdot\widehat{C}$ are negative, so the assertion follows. Hence $$\widehat{X}\setminus(\widehat{F}\cup\widehat{D})\simeq\widetilde{X}\setminus(\widetilde{F}\cup\widetilde{D}),$$ so $X\setminus \varphi (\widehat{D})\simeq W\setminus \sigma (\widetilde{F})$, because $\Gamma\subset \sigma (\widetilde{F})$ and $L\subset\varphi (\widehat{D})$. Thus, we obtain the assertion of the corollary.
\end{proof}

Using Corollary \ref{cc} we obtain that if the variety $W\setminus F$ is cylindrical, so the variety $X$ is. The case of a non-normal surface $F=F(\Gamma)$ was considered in \cite{kis}.

\begin{corollary}[see {\cite[Theorem 3.1]{kis}}]
    In the notation of Theorem \ref{thc}, if the surface $F$ is non-normal, then the variety $W\setminus F$ is cylindrical. In particular, by Theorem \ref{thl} and Theorem \ref{lot2}, if $\operatorname{F}_1(X)$ is singular, then $X$ is cylindrical.
\end{corollary}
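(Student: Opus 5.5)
By Proposition~\ref{prd}, a non-normal $F=F(\Gamma)$ is a ruled surface whose singular locus is a line $\Lambda=\Sing(F)$. The plan is to fibre $W\setminus F$ using linear subspaces through $\Lambda$, and to show that over a dense open subset of the base the fibres avoiding $F$ form a trivial $\AAA^1$-bundle. The "in particular" part then follows from the chain already in the paper. If $\mathrm{F}_1(X)$ is singular at $[L]$, then by Theorem~\ref{thl}(3) $\NNN_{L/X}\cong\OOO_{\PP^1}(1)\oplus\OOO_{\PP^1}(-2)$. Theorem~\ref{lot2} then gives that $F_L$ is non-normal. Finally, Corollary~\ref{cc} identifies $W\setminus F_L$ with an open subset of $X$, so a cylinder in $W\setminus F_L$ is a cylinder in $X$.

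\emph{Case $g=9$.} Here $F\subset\PP^3$ is a cubic that is singular along $\Lambda=\{x_0=x_1=0\}$. Consider the pencil of planes $\Pi_t=\{x_1=tx_0\}$ through $\Lambda$. Since $F$ has multiplicity $\geqslant 2$ along $\Lambda$, each section is $F\cap\Pi_t=2\Lambda+\ell_t$ with $\ell_t$ a line. For general $t$ we have $\ell_t\neq\Lambda$, since otherwise $\Lambda$ would be a triple line of the irreducible cubic $F$.
\begin{itemize}
\item In the chart $x_0=1$, the set $\Pi_t\setminus\Lambda$ is $\AAA^2$ with coordinates $(x_2,x_3)$, and $\ell_t$ becomes an affine line $a(t)x_2+b(t)x_3+c(t)=0$ whose coefficients are polynomial in $t$.
\item Restrict to an open set $T\subset\AAA^1$ on which, say, $a(t)\neq0$. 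The map $(t,x_2,x_3)\mapsto(t,\,u=a x_2+bx_3+c,\,v=x_3)$ is then an isomorphism over $T$.
\item It identifies $\bigcup_{t\in T}\Pi_t\setminus(\Lambda\cup\ell_t)$ with $T\times\GG_m\times\AAA^1$, which is an open subset of $\PP^3\setminus F$ and a cylinder.
\end{itemize}

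\emph{Case $g=10$.} Here $F=Q\cap Q'$ for a second quadric $Q'$ of the pencil from Lemma~\ref{lc}, and $\Lambda\subset Q$. Consider the projection from $\Lambda$, $Q\setminus\Lambda\to\PP^2$. Each plane $\Pi\supset\Lambda$ with $\Pi\not\subset Q$ meets $Q$ in $\Lambda\cup m_\Pi$, where $m_\Pi$ is a line meeting $\Lambda$ at a point $p$. The fibre of the projection over $\Pi$ is $m_\Pi\setminus\{p\}\cong\AAA^1$, at least generically.
\begin{itemize}
\item The key local claim is that $F\cap m_\Pi=\{p\}$ whenever $m_\Pi\not\subset Q'$.
\item Since $F=Q\cap Q'$ is singular along $\Lambda$, the quadrics $Q$ and $Q'$ have the same tangent hyperplane at every point of $\Lambda$.
\item The line $m_\Pi$ lies in $Q$, so its direction lies in $T_pQ=T_pQ'$. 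Hence $Q'|_{m_\Pi}$ vanishes to order $2$ at $p$.
\item As $Q'|_{m_\Pi}$ has degree $2$, this forces $m_\Pi\cap Q'=\{p\}$ set-theoretically, unless $m_\Pi\subset Q'$.
\end{itemize}
Lines $m_\Pi\subset F$ occur only over a curve in $\PP^2$, namely the rulings of $F$. Removing them, and shrinking the base to an open subset $B\subset\PP^2$, leaves an open subset of $Q\setminus F$ that is an $\AAA^1$-fibration over $B$. This fibration is the restriction of the linear projection, so it is an affine-line bundle: the complement of a section, the point $p$, in a $\PP^1$-bundle that is Zariski-locally trivial. Shrinking $B$ further trivialises it, giving $B\times\AAA^1\subset Q\setminus F$.

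The step I expect to be the main obstacle is this genericity bookkeeping in the case $g=10$. One must check three things. First, a general plane through $\Lambda$ is not contained in $Q$, which holds because a smooth $3$-dimensional quadric contains no planes. Second, the residual line $m_\Pi$ is not contained in $Q'$ for general $\Pi$. Third, the projection really yields a Zariski-locally trivial $\AAA^1$-bundle rather than something twisted. I would settle the last point by writing the projection explicitly in coordinates adapted to $\Lambda$, exactly as in the cubic case.
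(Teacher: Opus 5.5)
Your proof is correct. Your derivation of the ``in particular'' part matches the paper's exactly: Theorem~\ref{thl} gives $\NNN_{L/X}\cong\OOO_{\PP^1}(1)\oplus\OOO_{\PP^1}(-2)$, Theorem~\ref{lot2} makes $F_L$ non-normal, and Corollary~\ref{cc} moves the cylinder into $X$.

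For the first assertion the paper gives no argument of its own; it simply cites \cite[Theorem 3.1]{kis}. It is also recovered later from Propositions~\ref{ccc} and~\ref{l2}, since a non-normal $F$ is in particular singular. Those proofs project from a single singular point $P$:
\begin{itemize}
\item For the cubic, an open part of $\PP^3\setminus F$ becomes a trivial $\GG_m$-bundle over $\PP^2\setminus(C\cup L)$, and a pencil of conics is needed to see that this base is itself a cylinder.
\item For the quartic, one projects $W$ from $P$ to $\PP^3$ and reduces to a reducible cubic containing $H\cup S$, using $S\cdot H^2=4-k$.
\end{itemize}
You project instead from the whole double line $\Lambda$. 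This uses non-normality essentially, so it could not replace Propositions~\ref{ccc} and~\ref{l2} in Corollary~\ref{mp}. In return it exhibits the $\AAA^1$-fibres immediately. In the cubic case the equation of $F$ on $\{x_0\neq 0\}$ is linear in $(x_2,x_3)$. In the quadric case each residual line $m_\Pi$ meets $F$ only on $\Lambda$. So your route is more elementary and self-contained, while the paper's is more general.

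One statement should be tightened. ``$Q$ and $Q'$ have the same tangent hyperplane at every point of $\Lambda$'' fails if $Q'$ is singular at a point of $\Lambda$, and the pencil always contains such a member. The Jacobian criterion gives $dq'(p)=\lambda(p)\,dq(p)$ for $p\in\Lambda$. Since $p\mapsto dq(p)$ is linear and injective ($Q$ is smooth), $\lambda$ is constant. Hence $Q''=\{q'-\lambda q=0\}$ is a cone with vertex $\Lambda$. What you actually use is $dq'(p)\in\CC\,dq(p)$, and that still gives order-two vanishing on $m_\Pi$, so your conclusion stands.

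Choosing $Q'=Q''$ also removes the bookkeeping you were worried about:
\begin{itemize}
\item $F\setminus\Lambda$ is then the preimage of a conic $C_0\subset\PP^2$.
\item Hence $Q\setminus F$ is exactly the preimage of $\PP^2\setminus C_0$ under the $\AAA^1$-fibration $Q\setminus\Lambda\to\PP^2\setminus\{[\Pi_0]\}$, where $\Pi_0$ is the unique plane with $\Pi_0\cap Q=2\Lambda$.
\item With $\Lambda=\{x_0=x_1=x_2=0\}$, the equation of $Q$ on $\{x_0\neq0\}$ is linear in $(x_3,x_4)$. So this fibration is trivial over a dense open subset, just as in your cubic case.
\end{itemize}
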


\section{On cylindricity of \texorpdfstring{$W\setminus F$}{W\textbackslash F}}
\label{on}

In this section, we show that the complement of any singular cubic surface in $\PP^3$ and the complement of any singular surface of degree $4$ in a smooth three-dimensional quadric are cylindrical.

We follow the proof of Lemma $5.21$ of \cite{bla} and show that the following proposition holds.

\begin{proposition}[see {\cite[Lemma 5.21]{bla}}]
\label{ccc}
       Suppose that $F\subset\PP^3$ is a (possibly reducible) cubic surface. If the surface $F$ is singular, then the variety $\PP^3\setminus F$ is cylindrical. 
\end{proposition}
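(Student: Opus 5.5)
Our plan is to project from a singular point. Choose a point $P\in\Sing(F)$ and use homogeneous coordinates $x_0,\dots,x_3$ on $\PP^3$ with $P=[1:0:0:0]$. Since $F$ has multiplicity at least $2$ at $P$, its equation has the form
$$x_0 f_2(x_1,x_2,x_3)+f_3(x_1,x_2,x_3)=0,$$
where $f_2,f_3$ are homogeneous of degrees $2$ and $3$, one of them possibly zero. Consider the linear projection $\pi\colon\PP^3\dashrightarrow\PP^2$ from $P$. A line $\ell$ through $P$ meets $F$ at $P$ with multiplicity at least $2$, so it contains at most one further point of $F$. The idea is to exhibit $\PP^3\setminus F$, over a suitable open subset of $\PP^2$, as a bundle of affine lines (the lines through $P$ with their points of $F$ removed).

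First, suppose $f_2\not\equiv 0$ (so $P$ is a point of multiplicity exactly $2$). Over the open set $V=\{f_2\neq0\}\subset\PP^2$, restrict to the affine chart $x_1\neq0$ of $\PP^2$ to trivialise; it is cleaner to work on $U=\PP^2\setminus\{x_1f_2=0\}$. For $[x_1:x_2:x_3]\in U$ the line through $P$ is parametrised by $[t:x_1:x_2:x_3]$, with $t=\infty$ corresponding to $P$. The residual intersection with $F$ is the single point $t=-f_3/f_2$. Hence $\pi^{-1}(U)\setminus F$ is the set of all $[t:x_1:x_2:x_3]$ with $t\neq -f_3/f_2$ and $P$ removed. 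Normalising $x_1=1$, the substitution $s=1/(t+f_3/f_2)$ identifies this set with $U\times(\AAA^1\setminus\{0\})$, where $s=0$ corresponds to $P$. That is not yet a cylinder. Instead, choose the affine coordinate so that the removed point goes to infinity: set $u=x_0f_2+f_3$, which is nonzero off $F$, and use the regular function
$$\frac{x_1^{\,3}}{x_0f_2+f_3}$$
on each fibre. Along the fibre $\{[t:1:x_2:x_3]\}$ this is $1/(tf_2+f_3)$, which is an affine coordinate on $\PP^1\setminus\{t=-f_3/f_2\}\cong\AAA^1$; here $P$ (where $t=\infty$) corresponds to the value $0$ and lies in the complement of $F$ only if $P\notin F$, but $P\in F$. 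We therefore remove $P$ as well. This is harmless because we only need an open subset: set $U'=U$ and get
$$\pi^{-1}(U')\cap(\PP^3\setminus F)\cong U'\times(\AAA^1\setminus\{0\}).$$
The remedy is to note that the fibre over $U$ of $\PP^3\setminus F$ is $\PP^1$ minus two points, namely $P$ and the residual point. It is cleaner to choose the projection centre differently, so that only one point is removed in each fibre. This is the main obstacle.

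The fix, following \cite[Lemma 5.21]{bla}, uses the tangent cone. Let $\ell\subset\PP^3$ be a line through $P$ lying in the tangent cone $\{f_2=0\}$ but not contained in $F$. Such lines meet $F$ only at $P$, because the intersection multiplicity there is at least $3$. Consider the pencil of planes $\Pi$ containing $\ell$. Each plane $\Pi$ cuts $F$ in a cubic curve $F\cap\Pi$ singular at $P$. Projecting $\Pi$ from $P$ then makes $\Pi\setminus(F\cap\Pi)$ fibred over an open subset of $\ell$-directions by lines meeting $F$ once. Concretely, take coordinates with $\ell=\{x_2=x_3=0\}$ and use the birational map $\PP^3\dashrightarrow\PP^1\times\PP^2$ given by the pencil of planes and the projection from $P$. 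On the complement of $F$ and of the plane section $\{f_2=0\}$ of the tangent cone, plus finitely many plane sections, we write the rational function $(x_0f_2+f_3)/(\text{cubic form in } x_1,x_2,x_3 \text{ vanishing along } f_2)$. If this function is linear in a fibre coordinate, its level sets give the $\AAA^1$-factor.

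More precisely, the step I would carry out is the following. Find coordinates such that $\PP^3\setminus(F\cup H)\cong Z\times\AAA^1$ for a suitable plane $H$ tangent along the cone. When $F$ has a point of multiplicity $2$, choose $H=\{x_1=0\}$ with $x_1$ dividing $f_2$ after a coordinate change; this is possible when the conic $f_2=0$ contains a line, and otherwise we use $f_2=x_1x_3-x_2^2$ and a weighted substitution. In the affine chart $x_1=1$, the equation $x_0f_2+f_3=c$ is then solved linearly for $x_0$ on $\{f_2\neq0\}$, which yields the cylinder $\{f_2\neq 0\}\times\AAA^1$ minus the closed set $\{x_0f_2+f_3=0\}$. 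Taking $Z=\{(x_2,x_3,c):\ c\neq0,\ f_2(1,x_2,x_3)\neq0\}$ and $x_0$ as the free coordinate fails because $x_0$ is determined by $c$; instead take the fibre coordinate along the level sets of $x_2$.

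The remaining cases are the following. If $f_2\equiv0$, then $F$ is a cone over a plane cubic, and $\PP^3\setminus F$ contains $(\PP^2\setminus C)\times\AAA^1$ directly via projection from the vertex, since $F$ is a union of lines through $P$. If $F$ is reducible, the same projection argument applies, or $F$ contains a plane and $\PP^3\setminus F\subset\AAA^3$ reduces to the complement of a quadric in $\AAA^3$, which is clearly cylindrical. The case of a nondegenerate $f_2$ is the one I expect to require the explicit construction of \cite{bla}.
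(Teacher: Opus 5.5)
There is a genuine gap: the main case $f_2\not\equiv 0$ is never proved. You correctly find that over $U=\PP^2\setminus\{x_1f_2=0\}$ one has $\pi^{-1}(U)\setminus F\cong U\times(\AAA^1\setminus\{0\})$, since each line through $P$ loses two points, $P$ and the residual intersection point. You then call this ``not yet a cylinder'' and look for a fibration whose fibres lose only one point. The later attempts (the pencil of planes through $\ell$, the rational function $(x_0f_2+f_3)/(\cdots)$, the ``weighted substitution'', the ``level sets of $x_2$'') are never carried out, and you end by deferring the nondegenerate case to \cite{bla}.

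The missing idea is that the $\AAA^1$-factor need not come from the fibres of $\pi$. The product $U\times(\AAA^1\setminus\{0\})$ is a cylinder as soon as the base $U$ is one. You arrange this by choosing the line $L=\{x_1=0\}$ well relative to the conic $C=\{f_2=0\}\subset\PP^2$. This is exactly what the paper does:
\begin{itemize}
\item if $C$ is reducible, take $L$ to be a component of $C$; then $\PP^2\setminus(C\cup L)$ is $\AAA^2$ or $\AAA^1\times(\AAA^1\setminus\{0\})$;
\item if $C$ is smooth, take $L$ tangent to $C$; then the pencil spanned by $C$ and $2L$ makes $\PP^2\setminus(C\cup L)$ a trivial $\AAA^1$-fibration over $\PP^1\setminus\{0,\infty\}$.
\end{itemize}
The paper phrases the rest via $\Bl_P\PP^3\cong\PP_{\PP^2}(\OOO\oplus\OOO(1))$, where $E$ and $\widetilde{F}$ are disjoint sections over $\PP^2\setminus(C\cup L)$.

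In your own coordinates, take $f_2=x_1x_3-x_2^2$ and the chart $x_1=1$, with $f_i$ evaluated at $(1,x_2,x_3)$. The map $(x_0,x_2,x_3)\mapsto(x_3-x_2^2,\ x_0f_2+f_3,\ x_2)$ identifies the open subset $\{f_2\neq 0\}$ of $\PP^3\setminus(F\cup\{x_1=0\})$ with $(\AAA^1\setminus\{0\})^2\times\AAA^1$. Its inverse $x_3=u+x_2^2$, $x_0=(c-f_3)/u$ is regular. So the set you wrote down in your ``More precisely'' paragraph already was a cylinder, with $x_2$ as the free coordinate. You only needed to notice that $\{x_3\neq x_2^2\}\subset\AAA^2$ is itself $(\AAA^1\setminus\{0\})\times\AAA^1$.

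Two smaller points. First, in the cone case $\PP^3\setminus F$ is the total space of $\OOO(1)$ restricted to $\PP^2\setminus C'$. For an irreducible cubic $C'$ the group $\Pic(\PP^2\setminus C')\cong\ZZ/3\ZZ$ is generated by $\OOO(1)$, so this is not a product over $\PP^2\setminus C'$, as you claim. You must also remove a plane through the vertex, as the paper does. Second, your treatment of reducible $F$ (``the complement of a quadric in $\AAA^3$ is clearly cylindrical'') is unjustified. It is also unnecessary: a reducible cubic is singular along the intersection of its components, so it is covered by the argument above.
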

\begin{proof}
    Since the surface $F$ is singular, we may assume $F$ to be given by the equation
    $$f=f_2(x,y,z)w+f_3(x,y,z)=0,$$
    where $f_i$ are homogeneous polynomials of degree $i$ and $(w:x:y:z)$ are the corresponding homogeneous coordinates of $\PP^3$. Note that the point $P=(1:0:0:0)\in F$ is singular.
 
    If $f_2=0$, then $F$ is a cone over a plane cubic $C'\subset\PP^2$, hence $\PP^3\setminus F$ is the total space of the restriction of $\OOO_{\PP^2}(1)$ to the complement of the cubic $\PP^2\setminus C'$. Since $\OOO_{\PP^2}(1)$ is trivial on the complement of any line, it follows that $\PP^3\setminus(F \cup H)$ is a cylinder, where $H$ is a plane through the vertex of $F$.

    If $f_2\neq0$, then the assertion of the proposition follows from the proof of Lemma $5.21$ of~\cite{bla}:

Since $f_2\neq0$, we have the conic $$C = \{f_2(x,y,z)=0\}\subset\PP^2.$$ We can choose the coordinates $(x:y:z)$ 
	such that the line $L=\{z=0\}$ is contained in $C$ if $f_2$ is reducible, and that the line
	 $\{z=0\}$ is tangent to  $C$ if $f_2$ is irreducible. We will show that $\PP^2\setminus(C\cup L)$ is a cylinder. 

     If $C$ is reducible, then $\PP^2\setminus(C\cup L)=\PP^2\setminus C$ is a cylinder over $\AAA^1\setminus\{0\}$ or $\AAA^1$.
     
     If $C$ is irreducible, then the pencil of conics generated by $C$ and the double line $2L$ defines a rational map $\PP^2\dashrightarrow\PP^1$ such that $C$ and $L$ are fibres, hence $\PP^2\setminus(C\cup L)$ is a trivial $\AAA^1$-fibration over $\PP^1\setminus \{0,\infty \}$, therefore $\PP^2\setminus(C\cup L)$ is a cylinder over $\AAA^1\setminus\{0\}$.

     Let $q:\Bl_P\PP^3\to\PP^3$ be the blow-up of the singular point $P\in F$. Then $\widetilde{W}=\Bl_P\PP^3$ is isomorphic to $\PP_{\PP^2}(\OOO_{\PP^2}\oplus\OOO_{\PP^2}(1))$. Consider the following commutative diagram:
    \begin{equation*}
    \centerline{
\xymatrix{
    &E\ar[dl]\ar@{^{(}->}[rr]&&\widetilde{W}\ar[dl]_{q}\ar[dr]^{r}&&H'\cup H''\ar[dr]\ar@{_{(}->}[ll]
\\
P\ar@{^{(}->}[rr]&&\PP^3\ar@{-->} [rr]^{\pi}&&\PP^2&&
C\cup L\ar@{_{(}->}[ll]
        }
       }
    \end{equation*}
    where $\pi: \PP^3\dashrightarrow \PP^2$ is the projection from the singular point $P$ of the surface $F$. Let $E$ be the exceptional divisor of the morphism $q$, let $H'=r^{-1}(C)$, let $H''=r^{-1}(L)$, let $\widetilde{F}\subset\widetilde{W}$ be the proper transform of $F$ and let $H=q(H'')\subset\PP^3$ be a plane. Hence $$\PP^3\setminus (F\cup H)\simeq \widetilde{W}\setminus(\widetilde{F}\cup H''\cup E)\supset \widetilde{W}\setminus(\widetilde{F}\cup H'\cup H''\cup E).$$ 
    Note that $\widetilde{F}\cap r^{-1}(\PP^2\setminus(C\cup L))$ and $E\cap r^{-1}(\PP^2\setminus(C\cup L))$ are two disjoint sections of the restriction $r^{-1}(\PP^2\setminus(C\cup L))\to \PP^2\setminus(C\cup L)$ of the morphism $r$. Therefore, $$\widetilde{W}\setminus(\widetilde{F}\cup H'\cup H''\cup E)=r^{-1}(\PP^2\setminus(C\cup L))\setminus (\widetilde{F}\cup E)$$ is a cylinder, because $r^{-1}(\PP^2\setminus(C\cup L))\setminus (\widetilde{F}\cup E)$ is a trivial $\AAA^1\setminus\{0\}$-fibration over $\PP^2\setminus(C\cup L)$ and $\PP^2\setminus(C\cup L)$ is a cylinder. Thus, $\PP^3\setminus F$ is cylindrical.
    
  This completes the proof of the proposition.
 \end{proof}

\begin{remark}[{\cite[Corollary 4.2]{che}}]
    Note that in Proposition \ref{ccc} the non-smoothness condition of $F$ is necessary:
    If $F\subset\PP^3$ is a smooth cubic surface, then the variety $\PP^3\setminus F$ is not cylindrical. 
\end{remark}

We also prove a proposition similar to Proposition \ref{ccc}.

\begin{proposition}
\label{l2}
    Let $W\subset\PP^4$ be a smooth quadric and let $Q\subset\PP^4$ be a quadric such that the intersection $F=W\cap Q$ is a singular surface. Then the variety $W\setminus F$ is cylindrical.
\end{proposition}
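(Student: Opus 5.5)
The plan is to imitate the proof of Proposition \ref{ccc}: project from a singular point $P$ of $F$ and reduce to the cylindricity of the complement of a suitable curve in a surface. Let $P\in F$ be a singular point of $F=W\cap Q$. Since $W$ is smooth, $F$ is a Cartier divisor on $W$ cut out by $Q$, so singularity of $F$ at $P$ means that $Q$ is tangent to $W$ at $P$, i.e. $T_PQ\supseteq T_PW$ (or $Q$ is singular at $P$). Replacing $Q$ by $Q+\lambda W$ in the pencil $\langle W,Q\rangle$ does not change $F$. So we may assume $Q$ is singular at $P$: choose $\lambda$ killing the linear part at $P$. In coordinates $(w:x_1:\dots:x_4)$ with $P=(1:0:\dots:0)$ this gives
$$W=\{wx_1+q_W(x_1,\dots,x_4)=0\},\qquad Q=\{q(x_1,\dots,x_4)=0\},$$
where $T_PW=\{x_1=0\}$.

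Let $\pi\colon W\dashrightarrow\PP^3$ be the projection from $P$. It is birational, and the blow-up $\Bl_PW$ maps onto $\PP^3$ by contracting the lines on $W$ through $P$. Concretely, $\pi$ restricts to an isomorphism
$$W\setminus T_PW\;\xrightarrow{\ \sim\ }\;\PP^3\setminus\{x_1=0\}\simeq\AAA^3,$$
with inverse $w=-q_W/x_1$. Since $Q$ is a cone with vertex $P$, the set $F\setminus T_PW$ corresponds to the affine quadric surface $\{q=0\}\cap\{x_1\neq 0\}$. Hence
$$W\setminus (F\cup T_PW)\simeq \AAA^3\setminus S,$$
where $S=\{q(1,x_2,x_3,x_4)=0\}$ is an affine quadric surface (or possibly $S$ is empty or lower degree, if $q$ is divisible by $x_1$). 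Since $W\setminus(F\cup T_PW)$ is open in $W\setminus F$, it suffices to show that the complement of an affine quadric (of degree $\le 2$) in $\AAA^3$ contains a cylinder.

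This is a classification of the affine polynomial $g=q(1,x_2,x_3,x_4)$ of degree $\le2$. In every case the plan is to find an $\AAA^1$-direction, possibly after removing one further hyperplane. If $g$ is constant (or $S=\varnothing$), then $\AAA^3$ itself is a cylinder. Otherwise:
\begin{itemize}
\item If $g$ does not depend on some linear coordinate after an affine change, then $\AAA^3\setminus S=(\AAA^2\setminus \{g=0\})\times\AAA^1$.
\item If $g$ is a nondegenerate quadratic in three variables with full-rank quadratic part, we can write $g=uv+h(t)$ with $u,v,t$ affine coordinates and $h$ of degree $\le2$; this form is available over $\CC$ by diagonalising the quadratic part. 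Then on $\{u\neq0\}$, the map $(u,v,t)\mapsto(u,\,uv+h(t),\,t)$ identifies $\{u\ne0\}\setminus S$ with $(\AAA^1\setminus0)\times(\AAA^1\setminus0)\times\AAA^1$, which is a cylinder.
\item The case of lower-rank quadratic part with a linear term is handled the same way: write $g=u^2+v$ or $g=uv+t$ etc., and solve for the variable appearing linearly.
\end{itemize}
In all cases removing the additional plane $\{u=0\}$ is harmless, since we only need an open cylinder.

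The main obstacle I expect is the initial normalisation: justifying that one can choose a member of the pencil singular at $P$, and checking the degenerate situations. These include the case where $Q$ contains $T_PW\cap W$, and reducible or non-reduced $F$, where $q$ may be divisible by $x_1$ so that $S$ has lower degree. I would handle these by noting that the statement only needs an open subset, so the above isomorphism still applies, with $S$ possibly empty or a plane.
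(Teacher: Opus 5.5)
Your proof is correct. Its first half is the same reduction the paper uses: project from a singular point $P\in F$ and identify $W\setminus(F\cup T_PW)$ with the complement, in $\AAA^3=\PP^3\setminus H$, of a surface $S$ of degree at most $2$. The two arguments differ in how this is set up and how it is finished.

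For the setup, the paper works on $\Bl_PW$, regarded also as the blow-up of $\PP^3$ along a conic in the plane $H$. It gets $\deg S=4-k\leqslant 2$ from the intersection number $\widetilde F\cdot(r^*H)^2$, where $k$ is the multiplicity of $F$ at $P$. You instead replace $Q$ by the member of the pencil $\langle W,Q\rangle$ that is a cone with vertex $P$. This makes the isomorphism $W\setminus T_PW\simeq\AAA^3$ and the equation of $S$ explicit, and it absorbs the reducible and non-reduced cases via $x_1\mid q$.

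The step you flagged as the main obstacle is fine. $F$ is a two-dimensional complete intersection, so by the Jacobian criterion, singularity at $P$ means the differential of the equation of $Q$ at $P$ is a multiple of that of $W$. The resulting cone equation is nonzero because $F\neq W$.

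To conclude, the paper notes that $H\cup S$ lies in a reducible, hence singular, cubic surface and applies Proposition~\ref{ccc}. You instead classify polynomials $g$ of degree at most $2$ on $\AAA^3$ directly. Your case list is complete: after an affine change of coordinates, either $g$ omits a variable or $g=uv+h(t)$, and the form $u^2+v$ already falls under your first case. Your version is self-contained and elementary, while the paper's avoids any case analysis by reusing the cubic result.
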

\begin{proof}
    Consider the following commutative diagram:
    \begin{equation*}
    \centerline{
\xymatrix{
    &E\ar[dl]\ar@{^{(}->}[rr]&&\widetilde
W\ar[dl]_{q}\ar[dr]^{r}&&E_1\ar[dr]\ar@{_{(}->}[ll]
\\
P\ar@{^{(}->}[rr]&&W\ar@{-->} [rr]^{\pi}&&\PP^3&&
C\ar@{_{(}->}[ll]
        }
       }
    \end{equation*}
    where $\pi: W\dashrightarrow \PP^3$ is the projection from a singular point $P$ of the surface $F$ and the morphism $q:\Bl_P W=\widetilde{W}\to W$ is the blow-up of the point $P\in W$. Note that $r$ is the blow-up of a smooth conic~$C$. Let $E$ be the exceptional divisor of the morphism $q$, let $E_1$ be the exceptional divisor of the morphism $r$, and let $\widetilde{F}\subset\widetilde{W}$ be the proper transform of $F$. 
    
Let $T_P W\subset\PP^4$ be the embedded tangent space at the point $P$ of the quadric $W$. Since $W\cap T_P W$ is the union of the lines on $W$ containing the point $P$, it follows that $W\cap T_PW=q(E_1)$, hence $$W\setminus (F\cup T_P W)\simeq \widetilde{W}\setminus(\widetilde{F}\cup E\cup E_1).$$ The points of the divisor $E$ correspond to the tangent lines to $W$ containing the point $P$, therefore $r(E)=H$, where $H\subset \PP^3$ is the linear span of $C$. Then $$\widetilde{W}\setminus(\widetilde{F}\cup E\cup E_1)\simeq \PP^3\setminus(H\cup S),$$ where $S=r(\widetilde{F})$. Moreover, $$S\cdot H^2=\widetilde{F}\cdot (r^*H)^2=(q^*F-kE)\cdot(r^*H)^2=q^*F\cdot(r^*H)^2-k=4-k,$$ where $k\geqslant 2$ is the multiplicity of $F$ at $P$.
    
    Thus, we obtain that $H\cup S\subset\PP^3$ is contained in a reducible cubic surface, therefore $\PP^3\setminus(H\cup S)$ is cylindrical by Proposition \ref{ccc}.
\end{proof}

\section{Cylindricity of Fano threefolds of genus \texorpdfstring{$9$}{9} and \texorpdfstring{$10$}{10}}
\label{cyl}

 In this section, we prove that if either the normal bundle $\NNN_{L/X}$ of a line $L$ on a smooth prime Fano threefold $X$ of genus $g=9$ or $g=10$ is isomorphic to $\OOO_{\PP^1}(1)\oplus\OOO_{\PP^1}(-2)$ or two other lines intersect $L$ at the same point or the number of lines intersecting $L$ is less than $14-g$ other lines, then the surface $F_L$ is singular, in particular, $X$ is cylindrical.

Let us recall the definition of the Atiyah--Kulikov flop.

\begin{definition}
    Let $\widetilde{X}$ be a smooth threefold and let $\widetilde{C}\subset \widetilde{X}$ be a smooth rational curve on $\widetilde{X}$ such that $$\NNN_{\widetilde C/\widetilde X}\cong\OOO_{\PP^1}(-1)\oplus\OOO_{\PP^1}(-1).$$ Denote by $q:\Bl_{\widetilde C}\widetilde X=\widebar{X}\to\widetilde X$ the blow-up of the curve $\widetilde C$ and by $E=\PP^1\times\PP^1$ the exceptional divisor of the morphism $q$. Then a birational map $\chi:\widetilde X\dashrightarrow \widehat X$ is called an \emph{Atiyah--Kulikov flop} if there exists the following commutative diagram:
    \begin{equation*}
     \centerline{
\xymatrix{
    &\widebar{X}\ar[dl]_{q}\ar[dr]^{r}
\\
\widetilde{X}\ar@{-->} [rr]^{\chi}&&\widehat{X}
        }
       }
       \end{equation*}
    where $\widehat X$ is a smooth threefold, $r:\widebar{X}\to \widehat X$ is the blow-up of the smooth rational curve $\widehat C=r(E)$, and the divisor $E$ is contracted in two different ways, i.e. the morphisms $q|_E:E\to \widetilde C$ and $r|_E:E\to \widehat C$ correspond to the two distinct projections of $\PP^1\times\PP^1$ onto $\PP^1$. Notice that $\widetilde C\subset\widetilde X$ and $\widehat C\subset\widehat X$ are called a \emph{flopping curve} and a \emph{flopped curve}, respectively.
    \end{definition}
    
    Note that the Atiyah--Kulikov flops exist in the category of smooth Moishezon manifolds (see \cite{ati} or \cite{kul}).

We will need the following lemma.

\begin{lemma}
    \label{lf}
    Let $\widetilde X$ be a smooth threefold and let $\chi:\widetilde{X}\dashrightarrow\widehat{X}$ be a flop. Assume that $\chi$ is an Atiyah--Kulikov flop
    near each flopping curve. Suppose that $\widetilde{F}\subset\widetilde{X}$ is a smooth surface containing the flopping centre of $\chi$. Let $\widebar{F}\subset\widebar{X}$ be the proper transform of $\widetilde{F}$. Then $\widebar{F}$ is naturally isomorphic to $\widetilde{F}$. In particular, the map $$\chi|_{\widetilde{F}}:\widetilde{F}\dashrightarrow\widehat{X}$$ is regular on $\widetilde{F}$. Let $\widetilde C\subset\widetilde X$ be a flopping curve, let $\widehat C\subset\widehat X$ be the corresponding flopped curve and let $\widehat F=\chi_*(\widetilde F)\subset \widehat X$. If $\NNN_{\widetilde C/\widetilde F}\cong\OOO_{\PP^1}(-1)$, then $\widehat{C}\cdot\widehat{F}=1$, $\widehat{C}\not\subset\widehat{F}$ and the morphism $\chi|_{\widetilde{F}}:\widetilde{F}\to\widehat{F}$ is the contraction of the curve $\widetilde{C}$ near $\widetilde{C}$.
\end{lemma}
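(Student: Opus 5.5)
The lemma is local near the flopping curves, so I will fix one flopping curve $\widetilde C$ at a time and work in a neighbourhood where $\chi$ is an Atiyah--Kulikov flop. I will use the diagram $\widetilde X\xleftarrow{q}\widebar X\xrightarrow{r}\widehat X$ with exceptional divisor $E\cong\PP^1\times\PP^1$. The argument has three steps: identify $\widebar F$ with $\widetilde F$, locate $\widebar F\cap E$ inside $E$, and then read off the behaviour of $r|_{\widebar F}$.

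\emph{Step 1: $\widebar F\cong\widetilde F$.} Since $\widetilde F$ is a smooth surface containing the smooth curve $\widetilde C$, the curve $\widetilde C$ is a Cartier divisor on $\widetilde F$. Blowing up $\widetilde X$ along $\widetilde C$ therefore restricts, on the proper transform, to the blow-up of $\widetilde F$ along a Cartier divisor, which is an isomorphism. Hence $q|_{\widebar F}\colon\widebar F\to\widetilde F$ is an isomorphism. Under it, $\widebar F\cap E$ is a section $s$ of $E\to\widetilde C$, namely the section given by the normal direction of $\widetilde F$, i.e. by the sub-line-bundle $\NNN_{\widetilde C/\widetilde F}\subset\NNN_{\widetilde C/\widetilde X}$. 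Consequently $\chi|_{\widetilde F}=r\circ(q|_{\widebar F})^{-1}$ is regular on $\widetilde F$. Doing this for each flopping curve gives the first claims.

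\emph{Step 2: the class of the section $s$.} Write $\NNN_{\widetilde C/\widetilde X}\cong\OOO(-1)^{\oplus2}$, so that $E=\PP(\NNN)\cong\PP^1\times\PP^1$. Let $f_q$ be a fibre of $q|_E$ and $f_r$ a fibre of $r|_E$. A sub-line-bundle $\OOO(a)\hookrightarrow\OOO(-1)^{\oplus2}$ gives a section of class $f_r+(-1-a)f_q$ in $E$. Here I will double-check the bookkeeping via $\OOO_E(E)\cong\OOO(-1,-1)$, which follows from the normal bundle. With $a=-1$, i.e. $\NNN_{\widetilde C/\widetilde F}\cong\OOO(-1)$, the section $s$ is a constant section, of class $f_r$: a fibre of the second ruling. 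Then $r$ maps $s$ to a single point of $\widehat C$. It follows that $r|_{\widebar F}$ contracts $s\cong\widetilde C$ to a point and is an isomorphism near $s$ away from $s$. So near $\widetilde C$, $\chi|_{\widetilde F}\colon\widetilde F\to\widehat F$ is the contraction of $\widetilde C$, and $\widehat C\not\subset\widehat F$, since $\widehat F\cap\widehat C=r(\widebar F\cap E)$ is a point.

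\emph{Step 3: $\widehat C\cdot\widehat F=1$.} Write $r^*\widehat F=\widebar F+mE$, where $m$ is the multiplicity of $\widehat F$ along $\widehat C$; this gives $m=0$ because $\widehat C\not\subset\widehat F$. Take a curve $\gamma=f_q\subset E$ with $r_*\gamma=\widehat C$. Then
$$\widehat C\cdot\widehat F=r^*\widehat F\cdot f_q=\widebar F\cdot f_q=(s\cdot f_q)_E=(f_r\cdot f_q)_E=1.$$
Here $\widebar F|_E=s$ as a divisor, since $\widebar F$ meets $E$ transversally (by Step 1 it is a section, with multiplicity one).

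The main obstacle is Step 2: getting the sign and identification conventions for the section class right, so that normal degree $-1$ corresponds exactly to a fibre of $r|_E$. I will verify it by computing $s^2$ in two ways. By adjunction in $\widebar F\cong\widetilde F$, $s^2=\widetilde C^2_{\widetilde F}=-1$. On the other hand, $s^2=\deg \NNN_{s/\widebar F}$, and comparing $\NNN_{s/\widebar X}=\NNN_{s/E}\oplus\NNN_{s/\widebar F}$ gives $\deg\NNN_{s/\widebar F}=\deg\OOO_E(E)|_s=-1$ exactly when $s$ has class $f_r$. This confirms that the class is $f_r$ and not something else.
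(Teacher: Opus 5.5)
Your proposal is correct and follows essentially the same route as the paper. Both arguments use the blow-up of the Cartier divisor $\widetilde C\subset\widetilde F$ to get $\widebar F\cong\widetilde F$, identify $\widebar F\cap E=\PP(\NNN_{\widetilde C/\widetilde F})\subset\PP(\NNN_{\widetilde C/\widetilde X})$ as a fibre of the ruling contracted by $r$ (so $\widehat C\not\subset\widehat F$ and $r^*\widehat F=\widebar F$), and conclude $\widehat C\cdot\widehat F=\widebar F\cdot f_q=1$ by the projection formula; your explicit section-class computation and the $s^2$ consistency check only spell out what the paper asserts in one line.
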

\begin{proof}
 Restricting to an open subset of $\widetilde{X}$, we may assume that the flopping centre of $\chi$ is a rational curve $\widetilde{C}$.
       Let $E\simeq\PP^1\times\PP^1$ be the exceptional divisor of the morphism $q$, where $q:\widebar{X}=\Bl_{\widetilde{C}}\widetilde{X}\to\widetilde{X}$. Since $q|_{\widebar{F}}:\widebar{F}\to\widetilde{F}$ is the blow-up $\Bl_{\widetilde{C}}\widetilde{F}\to\widetilde{F}$ and $\widetilde{C}\subset\widetilde{F}$ is a Cartier divisor, we see that the morphism $q$ maps the surface $\widebar{F}$ isomorphically onto the surface $\widetilde{F}$ and the curve $\widebar{C}=\widebar{F}\cap E$ isomorphically onto the curve $\widetilde{C}$. Thus, the map $\chi|_{\widetilde{F}}:\widetilde{F}\dashrightarrow\widehat{X}$ is regular on $\widetilde{F}$.
    
    Let us prove the second part of the lemma. Since $$\NNN_{\widetilde C/\widetilde F}\cong \OOO_{\PP^1}(-1)\subset \OOO_{\PP^1}(-1)\oplus\OOO_{\PP^1}(-1)\cong\NNN_{\widetilde C/\widetilde X},$$ it follows that $$\widebar C\cong\PP(\NNN_{\widetilde C/\widetilde F})\subset\PP(\NNN_{\widetilde C/\widetilde X})\cong \PP^1\times\PP^1$$ is a fibre of the second projection $p_2:\PP^1\times\PP^1\to\PP^1$. Let $f$ be a fibre of the first projection, i.e. a fibre of the morphism $q|_E:E\to \widetilde C$. We see that $\widehat{C}\not\subset\widehat{F}$, because $r(f)=\widehat{C}$, $r(\widebar{F})\cap \widehat{C}=r(\widebar{F}\cap E)=r(\widebar{C})$ and $r(\widebar{C})$ is a point, where $r:\widebar{X}=\Bl_{\widehat{C}}\widehat{X}\to\widehat{X}$, whence the morphism $\chi|_{\widetilde{F}}:\widetilde{F}\to\widehat{F}$ is the contraction of the curve $\widetilde{C}$ near $\widetilde{C}$. Therefore $\widebar F\sim r^{*}(\widehat F)$, where $r^{*}(\widehat F)$ is the pullback of $\widehat{F}$. Thus, $$\widehat F\cdot\widehat C=\widehat F\cdot r_*(f)=r^*(\widehat F)\cdot f=\widebar F\cdot f=1.$$ This completes the proof of the lemma. 
\end{proof}

The following proposition helps us to describe the surface $F_L$ corresponding to a line $L\subset X$ whenever the surface $F_L$ is smooth.

\begin{proposition}
\label{rm}
Suppose that $X$ is a smooth prime Fano threefold of genus $g=9$ or $g=10$. Let $L\subset X$ be a line and let $F_L$ be as in the notation of Theorem \ref{ic}. Assume that the surface $F_L$ is smooth. Then $F_L$ is isomorphic to the blow-up of $\widehat{F}$ at the points $\widehat{F}\cap \widehat{L_i}$, where $\widehat{L_i}\subset\widehat{X}$ are the proper transforms of the lines intersecting $L$. Moreover, $\NNN_{L/X}\cong\OOO_{\PP^1}\oplus\OOO_{\PP^1}(-1)$, there are exactly $14-g$ lines on $X$ intersecting $L$ and their intersection points with $L$ are distinct.
\end{proposition}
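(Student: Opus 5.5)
Since $F_L$ is smooth, it is in particular normal, so Theorem \ref{lot2} gives $\NNN_{L/X}\cong\OOO_{\PP^1}\oplus\OOO_{\PP^1}(-1)$. Consequently $\widehat F\cong\PP(\OOO\oplus\OOO(-1))$ is the Hirzebruch surface $\mathbb F_1$, with $K_{\widehat F}^2=8$. By Remark \ref{rm3}, the flopped curves of $\chi$ are then exactly the proper transforms $\widehat{L_i}$ of the lines $L_i$ meeting $L$; there is no exceptional section. By Proposition \ref{prd}, $F_L$ is a smooth del Pezzo surface, and adjunction in $\PP^3$, respectively in $Q$, shows that $-K_{F_L}$ is the hyperplane class. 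Hence $K_{F_L}^2=3$ for $g=9$ and $K_{F_L}^2=4$ for $g=10$.

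I will work with the inverse flop $\chi^{-1}:\widetilde X\dashrightarrow\widehat X$ and the surface $\widetilde F$. Since $\Gamma$ is a Cartier divisor on the smooth surface $F_L$, the map $\sigma|_{\widetilde F}:\widetilde F\to F_L$ is an isomorphism, so $\widetilde F$ is smooth. As shown in the proof of Corollary \ref{cc}, every flopping curve $\widetilde C$ lies on $\widetilde F$ and satisfies $\widetilde F\cdot\widetilde C<0$. By Remark \ref{rm4}, $\widetilde C$ is a smooth rational curve, and $K_{\widetilde X}\cdot\widetilde C=0$. Adjunction then gives $K_{\widetilde F}\cdot\widetilde C=\widetilde F\cdot\widetilde C\leqslant -1$, and hence $\widetilde C^2_{\widetilde F}=-2-K_{\widetilde F}\cdot\widetilde C\geqslant -1$. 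On the other hand, $\widetilde C$ is contracted by the flopping contraction, so $\widetilde C^2_{\widetilde F}<0$. Therefore $\widetilde C^2_{\widetilde F}=-1$ and $\widetilde F\cdot\widetilde C=-1$.

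The normal bundle sequence $0\to\NNN_{\widetilde C/\widetilde F}\to\NNN_{\widetilde C/\widetilde X}\to\NNN_{\widetilde F/\widetilde X}|_{\widetilde C}\to0$ has both outer terms isomorphic to $\OOO(-1)$, and $\operatorname{Ext}^1(\OOO(-1),\OOO(-1))=0$. So $\NNN_{\widetilde C/\widetilde X}\cong\OOO(-1)\oplus\OOO(-1)$.

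I also need the flopping curves to be pairwise disjoint. The flopping contraction restricts to a contraction on $\widetilde F$, so each connected configuration of flopping curves has a negative definite intersection matrix. Two $(-1)$-curves meeting with intersection number $a\geqslant1$ give the matrix $\left(\begin{smallmatrix}-1&a\\ a&-1\end{smallmatrix}\right)$, which is not negative definite. Hence the curves are disjoint, and near each of them $\chi^{-1}$ is an Atiyah--Kulikov flop. I expect this step, justifying that $\chi^{-1}$ is Atiyah--Kulikov curve by curve, to be the main technical point.

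Lemma \ref{lf} now applies to $\chi^{-1}$ and $\widetilde F$. The image $\chi^{-1}_*\widetilde F=\widehat F$ is obtained from $\widetilde F\cong F_L$ by contracting the disjoint $(-1)$-curves $\widetilde C_i$. Each $\widetilde C_i$ goes to a point where the corresponding flopped curve $\widehat{L_i}$ meets $\widehat F$, with $\widehat{L_i}\cdot\widehat F=1$, and $\widehat{L_i}\not\subset\widehat F$. Distinct curves contract to distinct points. Thus $F_L$ is the blow-up of $\widehat F$ at the points $\widehat F\cap\widehat{L_i}$, and these points are pairwise distinct.

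Comparing $K^2$, the number of lines meeting $L$ is $8-K_{F_L}^2$, which is $5$ for $g=9$ and $4$ for $g=10$, that is, $14-g$ in both cases.

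It remains to see that the points $L_i\cap L$ are distinct. If two of them coincided, the two blown-up points would lie on the same fibre $f$ of $\widehat F\to L$. The proper transform of $f$ in $F_L$ would then be a smooth rational curve with self-intersection $0-2=-2$. Such a curve satisfies $K_{F_L}\cdot f=0$, which contradicts the ampleness of $-K_{F_L}$ on the smooth del Pezzo surface $F_L$.
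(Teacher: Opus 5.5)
Your proposal is correct and follows the paper's proof essentially step by step. Both arguments identify $\widetilde F\cong F_L$, show that each flopping curve is a $(-1)$-curve on $\widetilde F$ with $\widetilde F\cdot\widetilde C=-1$, split the normal bundle sequence to get an Atiyah--Kulikov flop, apply Lemma \ref{lf}, and then count the points and rule out two on one fibre using $K^2$ and the del Pezzo property. The differences are local and make your version slightly more self-contained: you derive $\widetilde F\cdot\widetilde C=-1$ from $\widetilde F\cdot\widetilde C<0$, adjunction and negativity of contracted curves, where the paper cites Iskovskikh's Lemma 2(iii); and you get disjointness of the flopping curves from negative definiteness on $\widetilde F$, where the paper cites Cutkosky and Reid.
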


\begin{proof}
  Since $\Gamma\subset F_L$ is a Cartier divisor, it follows that the surface $\widetilde{F}\subset \widetilde{X}$ is isomorphic to $F_L$, in particular, $\widetilde{F}$ is a smooth del Pezzo surface. 

    Since distinct irreducible flopping curves do not intersect each other (see \cite[Proposition 4(6), Theorem 13]{cut} and \cite[Corollary 5.6]{rei}), we see that to describe the flop $\chi^{-1}$ it is sufficient to describe the flop $\chi$ in neighbourhoods of the irreducible flopping curves.
    
    Let $\widetilde{C}\subset \widetilde{X}$ be an irreducible flopping curve. Since, by \cite[Lemma 2(iii)]{isk3}, $(-K_{\widetilde{X}}-\widetilde{F})\cdot\widetilde{C}=1$, we see that $$-\widetilde{F}\cdot\widetilde{C}=(-K_{\widetilde{X}}-\widetilde{F})\cdot\widetilde{C}=1.$$ In particular $\widetilde{C}\subset\widetilde{F}$, so, by Remark \ref{rm4}, $\sigma(\widetilde{C})$ is a line on $\sigma(\widetilde{F})$. Consider the short exact sequence of normal bundles. 
    $$0\longrightarrow \NNN_{\widetilde{C}/\widetilde{F}}\longrightarrow \NNN_{\widetilde{C}/\widetilde{X}}\longrightarrow\NNN_{\widetilde{F}/\widetilde{X}}|_{\widetilde{C}}\longrightarrow 0.$$ Since $\widetilde{C}$ is a $(-1)$-curve on $\widetilde{F}$ and $\widetilde{F}\cdot \widetilde{C}=-1$, it follows that this exact sequence is of the following form.
    $$0\longrightarrow\OOO_{\PP^1}(-1)\longrightarrow \OOO_{\PP^1}(a)\oplus\OOO_{\PP^1}(-2-a)\longrightarrow\OOO_{\PP^1}(-1)\longrightarrow 0,$$ where $a\in\ZZ_{\geqslant -1}$. Indeed, $\NNN_{\widetilde{C}/\widetilde{F}}\cong \OOO_{\widetilde{C}}(\widetilde{C})\cong\OOO_{\PP^1}(-1)$ and $\NNN_{\widetilde{F}/\widetilde{X}}|_{\widetilde{C}}\cong\OOO_{\widetilde{C}}(\widetilde{F})\cong\OOO_{\PP^1}(-1)$. But $$\operatorname{Ext}^1(\OOO_{\PP^1}(-1),\OOO_{\PP^1}(-1))=\operatorname{H}^1(\PP^1,\OOO_{\PP^1})=0,$$ therefore the short exact sequence of normal bundles splits. Thus, $a=-1$.
    
    From this we see that in a neighbourhood of $\widetilde{C}$ the flop $\chi$ is the Atiyah--Kulikov flop (see \cite{ati} or \cite{kul}). Hence, by Lemma \ref{lf}, the birational map $$\chi^{-1}|_{\widetilde{F}}:\widetilde{F}\dashrightarrow \widehat{F}$$ is, in fact, regular and it is the contraction of the flopping curves. 
    
    Since $F_L$ is smooth, it follows, by Theorem \ref{lot2} and Remark \ref{rm3}, that the flopped curves are the proper transforms of the lines intersecting $L$. Note that the morphism $\chi^{-1}|_{\widetilde{F}}:\widetilde{F}\to \widehat{F}$ is the contraction of the flopping curves and every flopping curve $\widetilde{C}\subset\widetilde{F}$ is a $(-1)$-curve. Hence $\chi^{-1}|_{\widetilde{F}}:\widetilde{F}\to \widehat{F}$ is the blow-up of $\widehat{F}$ at the points $\widehat{F}\cap \widehat{L_i}$, where $\widehat{L_i}\subset\widehat{X}$ are the proper transforms of the lines intersecting $L$. 
    Since $\widetilde{F}$ is a del Pezzo surface, it follows that $\widehat{F}$ is also a del Pezzo surface. Hence $\NNN_{L/X}\cong\OOO_{\PP^1}\oplus\OOO_{\PP^1}(-1)$, because $\widehat{F}$ is a $\PP^1$-bundle over $L$. In particular, the number of flopped curves is equal to $K^2_{\widehat{F}}-K^2_{\widetilde{F}}=14-g$ and $\chi^{-1}|_{\widetilde{F}}:\widetilde{F}\to \widehat{F}$ is the blow-up of $14-g$ points, where no pair of points lies on a fibre of the $\PP^1$-bundle $\widehat{F}\to L$. 
    
    Thus, the flopped curves are $14-g$ disjoint curves, whose images in $X$ intersect $L$ transversely at distinct points. This completes the proof of the proposition.
\end{proof}

Using Proposition \ref{rm}, we obtain the following corollary.

\begin{corollary}
    \label{mp}
    Suppose that $X$ is a smooth prime Fano threefold of genus $g=9$ or $g=10$. Let $L\subset X$ be a line and let $F_L$ be as in the notation of Theorem \ref{ic}. Assume that one of the following holds.
    \begin{enumerate}
        \item the normal bundle $\NNN_{L/X}$ is isomorphic to $\OOO_{\PP^1}(1)\oplus\OOO_{\PP^1}(-2)$.
        \item two other lines intersect $L$ at the same point.
        \item the number of lines intersecting $L$ is less than $14-g$.
    \end{enumerate}
    Then the surface $F_L\subset W$ is singular, in particular, $X$ is cylindrical. 
\end{corollary}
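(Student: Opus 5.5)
The plan is to argue by contraposition using Proposition \ref{rm}, and then deduce cylindricity from the results of Section \ref{on} together with Corollary \ref{cc}.

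First, suppose $F_L$ is smooth. Proposition \ref{rm} then says three things. The normal bundle $\NNN_{L/X}$ is $\OOO_{\PP^1}\oplus\OOO_{\PP^1}(-1)$. Exactly $14-g$ lines meet $L$. Their intersection points with $L$ are pairwise distinct. Each of these facts contradicts one of the hypotheses (1)--(3): (1) contradicts the normal bundle, (3) contradicts the count $14-g$, and (2) contradicts the distinctness of the intersection points.

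For (2), I need a little care. Two other lines through a common point of $L$ are two distinct lines $L_i\neq L_j$ among those meeting $L$. Proposition \ref{rm} says the points $\widehat F\cap\widehat{L_i}$ are distinct and not on a common fibre of $\widehat F\to L$. So the points $L\cap L_i$ are distinct, which is the contradiction. Hence under any of (1)--(3) the surface $F_L$ is singular.

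Next I deduce that $X$ is cylindrical.

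Case $g=9$. Here $W=\PP^3$ and $F_L$ is a cubic surface, by Theorem \ref{ic} and Lemma \ref{lc}. Proposition \ref{ccc} makes $\PP^3\setminus F_L$ cylindrical.

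Case $g=10$. Here $W=Q$ is a smooth quadric in $\PP^4$. By Lemma \ref{lc}, $F_L$ is the base locus of a pencil of quadrics containing $\Gamma$, so $F_L=W\cap Q'$ for another quadric $Q'$ in that pencil. Proposition \ref{l2} then gives that $W\setminus F_L$ is cylindrical.

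I need to check that the link of Theorem \ref{ic} agrees with the inverse construction of Theorem \ref{thc} for $\Gamma$. Then $F=F(\Gamma)$ coincides with $F_L$, and Corollary \ref{cc} gives $X\setminus D\simeq W\setminus F_L$. An open subset of $X$ therefore contains a cylinder, so $X$ is cylindrical.

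The main obstacle is this identification of $F_L$ with $F(\Gamma)$, and I expect it to be straightforward. Theorem \ref{ic} already states that $\Gamma\subset F_L$ with the right degree, and Lemma \ref{lc} gives uniqueness: the cubic through $\Gamma$ is unique, and the degree-$4$ surface is the base locus of the pencil. So $F_L=F(\Gamma)$.
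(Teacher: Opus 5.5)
Your proposal is correct and follows the paper's route. The singularity of $F_L$ is exactly the contrapositive of Proposition \ref{rm}, and cylindricity then comes from Corollary \ref{cc} together with Propositions \ref{ccc} and \ref{l2}, as in the proof of Theorem \ref{mt}.

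The identification $F_L=F(\Gamma)$ you flag is not a real obstacle. The proof of Corollary \ref{cc} directly gives $X\setminus\varphi(\widehat D)\simeq W\setminus\sigma(\widetilde F)=W\setminus F_L$. For $g=10$, the degree-$4$ surface $F_L$ lies in $|\OOO_W(2)|$ because $\Pic(W)=\ZZ$, so it is cut out on $W$ by a quadric, which is what Proposition \ref{l2} needs.
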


\section{Lines on Fano threefolds of genus \texorpdfstring{$9$}{9} and \texorpdfstring{$10$}{10}}
\label{lin2}

In this section, we prove Theorem \ref{lof}, i.e. we prove that for any smooth prime Fano threefold $X$ of genus $10$ there are three lines (some of them may coincide) having a common point. We also prove Proposition \ref{pl0}, i.e. we prove that if $X$ is a smooth prime Fano threefold of genus $9$ with the smooth Hilbert scheme of lines, then either there are three distinct lines having a common point or there is a line intersecting less than five other lines. We prove Proposition \ref{pl0} and Theorem \ref{lof} using different methods. For the case $g=9$, we show that the incidence curve $C\subset U$ of $X$, i.e. the preimage of the locus of points of $X$ through which there pass at least two lines in the universal family of lines $U$, cannot be \'etale over $\operatorname{F}_1(X)$. For the case $g=10$, we use Mukai's description $V_{18}\cap\PP^{11}$ of Fano threefolds and a description of the Hilbert scheme of lines on $V_{18}$ to prove that there is a point $P\in X$ such that the intersection $\langle \mathcal{L}_P\rangle\cap \PP^{11}$ is of dimension three, where $\mathcal{L}_P$ is the union of the lines on $V_{18}$ through $P$. The former method cannot be applied for the case $g=10$. And we do not apply the latter method for the case $g=9$, because similar calculations are much more involved.

Recall that $\mathrm{F}_1(X)=\operatorname{Hilb}^{1+t}(X;-K_X)$ denotes the Hilbert scheme of lines of a smooth Fano variety $X$ and $\mathrm{F}_1(X,P)\subset \mathrm{F}_1(X)$ denotes the subscheme of lines through $P$.

It is well known that $V_{16}=\operatorname{SP}_6/P_3\subset\PP^{13}$ (see Theorem \ref{muk}) is isomorphic to the Lagrangian Grassmannian $\operatorname{LGr(3,6)}$. It is clear that the isotropic partial flag variety $\operatorname{IFl}(2,3,6)$ is isomorphic to $\PP(\mathcal{S}^*)$ over $\operatorname{LGr(3,6)}$, where $\mathcal{S}$ is the tautological vector bundle on $\operatorname{LGr(3,6)}$. Moreover, the projection $\operatorname{IFl}(2,3,6)\to\operatorname{IGr(2,6)}$ is the universal family of lines on $\operatorname{LGr(3,6)}$.

In order to prove Proposition \ref{pl0}, we will need the following lemma.

\begin{lemma}
\label{lg}
	Let $X$ be a smooth prime Fano threefold of genus $9$. Then the Hilbert scheme of lines $\operatorname{F}_1(X)$ is connected and its arithmetic genus $p_a(\operatorname{F}_1(X))$ is equal to $17$.
\end{lemma}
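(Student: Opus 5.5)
We will realise $\operatorname{F}_1(X)$ as a degeneracy locus (or zero locus) of a section of a vector bundle on $\operatorname{IGr}(2,6)$. Then connectedness comes from a Koszul-type vanishing, and the arithmetic genus comes from adjunction.

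\emph{Setup.} By Theorem \ref{muk}, $X=\operatorname{LGr}(3,6)\cap\PP^{10}$, where $\PP^{10}$ is cut out by three hyperplanes, i.e. by three sections of $\OOO(1)$ on $\operatorname{LGr}(3,6)\subset\PP^{13}$. Lines on $\operatorname{LGr}(3,6)$ are parametrised by $\operatorname{IGr}(2,6)$, via the universal family $\operatorname{IFl}(2,3,6)\to\operatorname{IGr}(2,6)$ recalled above. The line corresponding to an isotropic $U_2$ is $\PP(U_2^{\perp}/U_2)\cong\PP^1$. On it the Plücker bundle $\OOO(1)$ restricts to $\OOO_{\PP^1}(1)$, so its space of sections is $\det U_2^*\otimes (U_2^{\perp}/U_2)^*$. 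Set $\mathcal E:=\det\mathcal U^*\otimes(\mathcal U^{\perp}/\mathcal U)^*$; this is a rank $2$ bundle on the $7$-dimensional variety $\operatorname{IGr}(2,6)$. Each of the three hyperplane equations gives a section of $\mathcal E$. Hence $\operatorname{F}_1(X)$ is the zero locus of a section of $\mathcal E^{\oplus 3}$, which has rank $6$. This scheme-theoretic description is the standard one, since a line lies in $X$ exactly when all three linear forms vanish on it. By Theorem \ref{thl}, $\operatorname{F}_1(X)$ has pure dimension $1=7-6$, so the section is regular and the Koszul complex
\[
0\to\Lambda^6(\mathcal E^{\oplus3})^*\to\dots\to(\mathcal E^{\oplus3})^*\to\OOO_{\operatorname{IGr}}\to\OOO_{\operatorname{F}_1(X)}\to0
\]
is exact.

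\emph{Connectedness.} It suffices to show $H^0(\OOO_{\operatorname{F}_1(X)})=\CC$. Chasing the Koszul resolution, this reduces to the vanishing $H^{k}(\operatorname{IGr}(2,6),\Lambda^{k}(\mathcal E^{\oplus3})^*)=0$ for $1\le k\le 6$. Each exterior power splits as a sum of tensor products $\Lambda^{a}\mathcal E^*\otimes\Lambda^b\mathcal E^*\otimes\Lambda^c\mathcal E^*$, and these are homogeneous bundles. We will decompose them and apply Borel--Weil--Bott for $\operatorname{Sp}_6$ to each irreducible summand. We expect this step to be the main obstacle: the bookkeeping is lengthy, and some summands (for instance, the symmetric squares arising from $\mathcal E^*\otimes\mathcal E^*$) are not irreducible and must be split further. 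A possible shortcut is to argue via deformation to a general $X$, where $\operatorname{F}_1(X)$ is smooth and irreducible by Theorem \ref{lot1}, combined with the fact that the rank of $H^0$ of the structure sheaf is upper semicontinuous. That argument only bounds $h^0$ from above, though, so the cohomology vanishing is the honest route.

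\emph{Genus.} Once $\operatorname{F}_1(X)$ is known to be a connected local complete intersection curve, we have $p_a=1-\chi(\OOO_{\operatorname{F}_1(X)})$ with $h^0=1$. We compute $\deg\omega_{\operatorname{F}_1(X)}=2p_a-2$ by adjunction. The canonical class is $\omega=(K_{\operatorname{IGr}}\otimes\det\mathcal E^{\oplus3})|_{\operatorname{F}_1(X)}$, where $K_{\operatorname{IGr}(2,6)}=\OOO(-5)$ in the Plücker class $\sigma_1$ of $\operatorname{Gr}(2,6)$. Since $\det\mathcal E=\OOO(1)$, we get $\omega=\OOO(-2)|_{\operatorname{F}_1(X)}$ in this class, up to the sign conventions, which we will check. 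The degree of $\operatorname{F}_1(X)$ in this class is $\int_{\operatorname{IGr}}c_6(\mathcal E^{\oplus3})\cdot\sigma_1=\int c_2(\mathcal E)^3\sigma_1$, and we evaluate it by Schubert calculus on $\operatorname{IGr}(2,6)$. The target is $\deg\omega=32$, i.e. $p_a=17$.

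As a consistency check, for general $X$ the curve $\operatorname{F}_1(X)$ is smooth, and its genus $17$ should agree with the known value $g(\operatorname{F}_1(X_{16}))=17$ (it is isomorphic to a plane quartic's related curve in Iliev's description). Flatness of the family of zero loci then gives $p_a=17$ for every $X$.
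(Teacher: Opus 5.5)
\textbf{The adjunction step contains a real error; the rest of your route is sound, and it differs from the paper's.} You claim $\det\mathcal E\cong\OOO(1)$, but in fact $\det\mathcal E\cong\OOO(2)$. Write $N:=\mathcal U^{\perp}/\mathcal U$. The symplectic form descends to $N$, so $N\cong N^*$ and $\det N\cong\OOO$. Hence $\mathcal E\cong\OOO(1)\otimes N$ has determinant $\OOO(2)$, and
\[
\omega_{\operatorname{F}_1(X)}\cong\bigl(\OOO(-5)\otimes\OOO(6)\bigr)|_{\operatorname{F}_1(X)}=\OOO(1)|_{\operatorname{F}_1(X)}.
\]
Your $\OOO(-2)$ would make $\deg\omega$ negative, so this is not a sign-convention issue. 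With the fix the numbers work out:
\begin{itemize}
\item $c(N)=\bigl(c(\mathcal U)c(\mathcal U^*)\bigr)^{-1}$ gives $c_2(N)=\sigma_2-\sigma_{11}$;
\item hence $c_2(\mathcal E)=c_2(N)+\sigma_1^2=2\sigma_2$;
\item since $[\operatorname{IGr}(2,6)]=\sigma_1$ in $\operatorname{Gr}(2,6)$,
\[
\deg\omega_{\operatorname{F}_1(X)}=8\int_{\operatorname{Gr}(2,6)}\sigma_2^3\sigma_1^2=8\cdot 4=32,
\]
so $p_a=17$.
\end{itemize}

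\textbf{The Bott step is much lighter than you expect, and it makes adjunction unnecessary.} Every summand of $\Lambda^k(\mathcal E^{\oplus3})^*$ is of the form $S^iN(-k)$ with $0\le i\le\min(k,6-k)$ and $i\equiv k\pmod 2$. Its highest weight is $(-k,-k,i)$, normalised so that $\OOO(1)$ has weight $(1,1,0)$, and $\rho=(3,2,1)$. Then $\lambda+\rho=(3-k,2-k,1+i)$ has a zero or a repeated absolute value in every case except two:
\begin{itemize}
\item $(k,i)=(4,2)$, with cohomology $\CC$ in degree $5$ only;
\item $(k,i)=(6,0)$, with cohomology of dimension $14$ in degree $7$ only.
\end{itemize}
So $H^k(\Lambda^k(\mathcal E^{\oplus3})^*)=0$ for $1\le k\le 6$, which gives connectedness. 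Since $\Lambda^4$ contains three copies of $S^2N(-4)$, the same table gives $\chi(\OOO_{\operatorname{F}_1(X)})=1-3-14=-16$, i.e.\ $p_a=17$ directly.

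\textbf{Comparison with the paper.} The paper defers connectedness to an argument from the literature. For the genus it uses the Koszul resolution only to see that $\chi(\OOO_{\operatorname{F}_1(X)})$ does not depend on $X$, and imports $p_a=17$ from the general case. That is exactly what you call a consistency check. The paper's route is shorter but relies on the known general value; yours is self-contained.

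\textbf{On the shortcut you discard.} The deformation argument does prove connectedness, but via Zariski's connectedness theorem applied to the flat proper family of zero loci, whose general member is connected by Theorem \ref{lot1}. Semicontinuity of $h^0$ is the wrong tool: it only gives the vacuous bound $h^0\ge 1$ at special members.
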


\begin{proof}
 The connectedness of $\operatorname{F}_1(X)$ is proved by the same argument as in \cite[Lemma 2.13]{kd}.
 
 Now, we will prove the second assertion. Consider the Hilbert scheme of lines $\operatorname{F}_1(\operatorname{LGr}(3,6))$, which is isomorphic to $\operatorname{IGr}(2,6)$. Then, since $\operatorname{LGr}(3,6)\cong V_{16}\subset\PP^{13}$, we have an embedding $\operatorname{IGr}(2,6)\subset\operatorname{Gr}(2,14)$. If $X$ is a smooth prime Fano threefold of genus $9$, then $$X=\operatorname{LGr}(3,6)\cap \PP^{10}\subset\PP^{13}$$ and $\operatorname{F}_1(X)$ is the scheme of zeros of a section $s\in \operatorname{H}^0(\operatorname{IGr}(2,6),(\mathcal{U}^*)^{\oplus 3})$, where $\mathcal{U}$ is the restriction of the tautological vector bundle on $\operatorname{Gr}(2,14)$ onto $\operatorname{IGr}(2,6)$. Since $\dim \operatorname{IGr}(2,6)=7$, the rank of $(\mathcal{U}^*)^{\oplus 3}$ equals $6$ and $\operatorname{F}_1(X)$ is of pure dimension $1$, we see that the section $s$ is regular. Then we have the Koszul resolution $$\cdots\to\Lambda^2(\mathcal{U}^{\oplus 3})\to\mathcal{U}^{\oplus 3}\to\OOO_{\operatorname{IGr(2,6)}}\to\OOO_{\operatorname{F}_1(X)}\to 0,$$ hence the Euler characteristic of $\OOO_{\operatorname{F}_1(X)}$ is the same for each smooth prime Fano threefold $X$ of genus $9$. In particular, by \cite[Theorem 4.2.7]{isk2}, $p_a(\operatorname{F}_1(X))=17$.
\end{proof}

\begin{proposition}
    \label{pl0}
     Suppose that $X$ is a smooth prime Fano threefold of genus $g=9$ with the smooth Hilbert scheme of lines. Then one of the following holds:
     \begin{enumerate}
         \item There are three distinct lines on $X$ having a common point.
         \item There is a line $L\subset X$ intersecting less than five other lines.
     \end{enumerate}
\end{proposition}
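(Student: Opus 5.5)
We argue by contradiction, following the outline in the introduction. Assume $\mathrm{F}_1(X)$ is smooth, no three distinct lines share a point, and every line meets at least five other lines. By Proposition \ref{rm} (the contrapositive of Corollary \ref{mp}) every line meets \emph{exactly} five other lines, transversally and at five distinct points of $L$. By Theorem \ref{thl}(3) every line satisfies $\NNN_{L/X}\cong\OOO_{\PP^1}\oplus\OOO_{\PP^1}(-1)$, and by Lemma \ref{lg} the curve $B:=\mathrm{F}_1(X)$ is smooth and connected, hence irreducible, of genus $17$. So $\deg K_B=32$.

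Let $\pi:U\to B$ be the universal line and $p:U\to X$ the natural map, whose image is the surface $R$ swept by lines. Define the incidence curve $C=\{(u,u')\in U\times_X U: \pi(u)\neq\pi(u')\}$ closure, i.e. pairs of intersecting distinct lines with their common point. The first step is to show that the projection $C\to B$, $(u,u')\mapsto\pi(u)$, is étale of degree $5$. Over each $[L]$ the fibre consists of exactly five reduced points, by the above. Reducedness should follow from a tangent-space computation at a pair of transversally meeting lines with normal bundle $\OOO\oplus\OOO(-1)$: the scheme of lines meeting $L$ is the degeneracy locus of a map whose transversality is equivalent to the two lines meeting transversally at distinct points. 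Alternatively, one can note that the flop in Theorem \ref{ic} has exactly five disjoint $(-1,-1)$-curves for every $L$, and deduce that the count is locally constant. Given this, $C$ is smooth and Riemann--Hurwitz gives $\deg K_C=5\cdot 32=160$.

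The second step concerns the curve $\Sigma=\Sing(R)$. Since no three distinct lines are concurrent and all intersections are transversal, $p$ is generically injective and $\Sigma$ is the image of $C$ in $X$. The map $C\to\Sigma$ is the quotient by the free involution $(u,u')\mapsto(u',u)$, so it is étale of degree $2$, $\Sigma$ has ordinary double-curve singularities, and $\deg K_C=2\deg K_{\tilde\Sigma}$. The key computation uses the normalisation $\nu:U\to R$, which is a ruled surface over $B$. Write $K_U=\nu^*K_R-\tilde\Sigma$, where $\tilde\Sigma=p^{-1}(\Sigma)\cong C$ is the conductor, and $K_R=(K_X+R)|_R$ with $R\sim aH$, so $K_R=(a-1)H|_R$. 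Adjunction on $U$ gives
$$2g(C)-2=(K_U+C)\cdot C=(a-1)H\cdot C.$$
Here $H\cdot C=\deg(p|_C)^*H=2\deg\Sigma$. The degree of $\Sigma$ and the class $a$ are determined by intersection theory on $U$. Alternatively, $C\cdot H$ can be computed on $U$ as $C\cdot(\text{section class})$ together with $C\cdot f=5$ for a fibre $f$, and $K_U$ is expressed in terms of the section and fibre. The goal is an identity showing $3\mid\deg K_C$ (I expect $a$ and $\deg\Sigma$ to combine so that a factor $3$ appears, e.g. $a-1$ or $H\cdot C$ divisible by $3$, reflecting $\deg X=16$ and $R\sim 3H$ or similar). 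Since $160$ is not divisible by $3$, this is a contradiction.

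The main obstacle is this second step: pinning down the class of $R$ and the numerics on the ruled surface $U$ precisely enough to extract divisibility by $3$. In particular, one needs to verify that $\nu$ is indeed the normalisation, with conductor exactly $\tilde\Sigma$ reduced. I would first compute $R$ for a general $X$ via the Mukai model, using that $\deg R$ equals $\deg$ of $p_*[U]$ and is constant in the family. I would then compute $C^2$ on $U$ from the self-intersection of the double curve. The étaleness in the first step is the secondary technical point.
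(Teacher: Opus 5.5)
\textbf{There is a genuine gap: the decisive numerical step is missing.} Your overall strategy is the paper's. You obtain $\deg K_C=5\cdot 32=160$ from an \'etale degree-$5$ cover of the genus-$17$ curve $\mathrm{F}_1(X)$, and you play it against a divisibility coming from adjunction on the normalisation $\nu\colon U\to R$. Your identity $K_U+\tilde\Sigma\sim\nu^*(K_X+R)|_R$ is the same as the paper's computation on $Y=\Bl_{\Sigma}X$, which uses $K_Y\sim h^*K_X+E$, $\widetilde R\sim h^*R-2E$ and $E|_{\widetilde R}=C$.

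However, you never determine $a$ in $R\sim aH$. Your tentative guess $R\sim 3H$ would give only $\deg K_C=2\,\nu^*H\cdot\tilde\Sigma$, which is compatible with $160$, so no contradiction follows. What is needed is $R\sim 4H$; the paper simply quotes this from Iskovskikh--Prokhorov, Theorem 4.2.7. You can also read it off your own identity, with no computation of $\deg\Sigma$, of $\tilde\Sigma^2$, or via the Mukai model. Intersect $K_U+\tilde\Sigma\sim(a-1)\nu^*H$ with a fibre $f$ of $U\to\mathrm{F}_1(X)$. Since $K_U\cdot f=-2$, $\tilde\Sigma\cdot f=5$ and $\nu^*H\cdot f=1$, you get $a-1=3$. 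Hence $\deg K_C=(K_U+\tilde\Sigma)\cdot\tilde\Sigma=3\,\nu^*H\cdot\tilde\Sigma$ is divisible by $3$, contradicting $\deg K_C=160$.

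\textbf{Your first step rests on an invalid citation.} Proposition \ref{rm} assumes that $F_L$ is smooth, and that is not available here. The same applies to your alternative claim that the flop has five disjoint $(-1,-1)$-curves for every $L$. Your hypotheses only say that every line meets \emph{at least} five others, at pairwise distinct points (since no three lines are concurrent).

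The paper gets ``exactly five'' and reducedness of the fibres from flatness instead:
\begin{itemize}
\item $C$ is an \'etale double cover of the divisor $\mathrm{F}_2^s(X)$ of singular conics, and this divisor is Cartier on the smooth surface $\mathrm{F}_2(X)$.
\item Hence $C$ is Cohen--Macaulay, and $C\to\mathrm{F}_1(X)$ is flat by miracle flatness, of degree $5$ (the count for a general line).
\item Every fibre therefore has length $5$. A fibre with five distinct points is then reduced, so the map is \'etale.
\end{itemize}
Your proposed tangent-space computation for reducedness is not carried out. As written, this step needs to be replaced by the flatness argument above.
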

\begin{proof}
    Assume the converse that for every point $Q\in X$ there are at most two lines meeting at $Q$ and each line $L\subset X$ intersects exactly five other lines.

Let $f:U\to \mathrm{F}_1(X)$ be the universal family of lines on $X$. There is a natural morphism $\varepsilon: U\to X$, where $\varepsilon(U)=R$ is equivalent to $4H$ (\cite[Theorem 4.2.7]{isk2}). Moreover, $\varepsilon: U\to X$ is unramified, i.e. the differential $d\varepsilon:\NNN_{L/U}\to \NNN_{L/X}$ is non-degenerate at all points of $L$ for each line $L\subset X$. Indeed, the differential is the evaluation map  $\operatorname{H}^0(L,\NNN_{L/X})\otimes\OOO_L\to \NNN_{L/X}$, hence the differential $d\varepsilon:\OOO_L\to\OOO_L\oplus\OOO_L(-1)$ is non-degenerate at all points of $L$, therefore, $\varepsilon$ is unramified.

Consider the Hilbert scheme of conics $\operatorname{F}_2(X)$ and the divisor $\operatorname{F}_2^s(X)\subset \operatorname{F}_2(X)$ of singular conics. Since this divisor is non-empty (see e.g. \cite[Lemma 7.7]{kp}), it follows that it is proper of pure dimension $1$. Since $\operatorname{F}_1(X)$ is smooth, we see that all conics are reduced (see \cite[Remark 2.1.7]{kps}). Hence there is an \'{e}tale double cover $C\to \operatorname{F}_2^s(X)$ and a morphism $\eta:C\hookrightarrow U$, such that distinct lines $L$ and $L'$ on $X$ intersect at point $P$ if and only if there is a point $c\in C$ such that $\eta(c)=([L],P)$ and $\eta(\theta(c))=([L'],P)$, where $\theta$ is the involution of the double cover $C\to \operatorname{F}_2^s(X)$.
 
 In particular, the locus of points $P\in X$ such that the fibre $\varepsilon^{-1}(P)$ is non-trivial coincides with the image of the morphism $\varepsilon\circ\eta:C\to X$, so it is of pure dimension $1$, hence the morphism $\varepsilon:U\to X$ is birational onto its image and $C'=\varepsilon(\eta(C))=\Sing(R)$. 
 
 Since $\operatorname{F}^s_2(X)$ is a Cartier divisor on the smooth surface $\operatorname{F}_2(X)$ (see \cite[Proposition 2.3.6]{kps}), we see that $\operatorname{F}^s_2(X)$ is a Cohen--Macaulay curve, so its \'{e}tale double cover $C$ is also Cohen--Macaulay. Furthermore, the morphism $C\to\operatorname{F}_1(X)$ is flat by miracle flatness and its degree is $5$, because, by our assumptions, any line intersects exactly $5$ other lines and the intersection points are distinct, and the degree is equal to the number of intersection points for a general line. Set-theoretically, the fibre of the morphism $C\to\operatorname{F}_1(X)$ over a point $[L]\in\operatorname{F}_1(X)$ is the set of pairs $([L],P)$ such that there exists a line $L'\neq L$ with $L\cap L'=P$. If the cardinality of this set is $5$, then the scheme fibre is reduced, so the morphism is \'{e}tale over $[L]$. Hence, by our assumptions, the morphism $C\to\operatorname{F}_1(X)$ is \'{e}tale. Thus, the morphism $f\circ\eta:C\to \mathrm{F}_1(X)$ is \'{e}tale of degree $5$.

Since $C\to \mathrm{F}_1(X)$ is \'{e}tale and $\mathrm{F}_1(X)$ is smooth, the curve $C'=\varepsilon(\eta(C))=\Sing(R)$ is smooth. Indeed, since $\varepsilon: U\to X$ is unramified, it follows that the morphism $\varepsilon\circ\eta:C\to C'$ is. Consider the normalisation $\nu:\widetilde{C'}\to C'$ of $C'$. By the universal property of normalisation, there is a natural morphism $\psi:C\to\widetilde{C'}$ such that $\nu\circ\psi=\varepsilon\circ\eta$. Therefore, we see that the morphism $\nu$ is unramified and, by our assumptions, $\nu$ is bijective. Hence the normalisation morphism $\nu$ is an isomorphism and $C'$ is smooth. In particular, the morphism $\varepsilon\circ\eta:C\to C'$ is \'{e}tale of degree $2$.

Note that the surface $R$ is obtained by glueing the pairs of points on $U$ corresponding to the fibres of the morphism $\varepsilon\circ\eta:C\to C'$. Consider the blow-up $h:Y=\Bl_{C'}(X)\to X$ of the curve $C'$. Then $$K_Y\sim h^*K_X+E\quad\text{and}\quad\widetilde{R}\sim h^*R-2E,$$ where $\widetilde{R}$ is the proper transform of $R$, $E$ is the exceptional divisor of $h$ and $\widetilde{R}\cong U$. Let us compute $\deg K_C$ using two ways. First, since $f\circ\eta:C\to \mathrm{F}_1(X)$ is \'{e}tale of degree $5$ and, by Lemma \ref{lg}, $\mathrm{F}_1(X)$ is of genus $17$ and connected, it follows that $\deg K_C=32\cdot 5=160$, in particular, $\deg K_C$ is not divisible by $3$. Second, by the adjunction formula, $$K_{\widetilde{R}}\sim(K_Y+\widetilde{R})|_{\widetilde{R}}.$$ Therefore, $$K_{\widetilde{R}}\sim(-h^*H+E+4h^*H-2E)|_{\widetilde{R}}\sim(3h^*H-E)|_{\widetilde{R}}\sim 3h^*H|_{\widetilde{R}}-C,$$ thus, $$\deg K_C=( K_{\widetilde{R}}+C)\cdot C=3h^*H\cdot C$$
and we see that $\deg K_C$ is divisible by $3$, a contradiction, so the assertion follows.
\end{proof}

Our next goal is to prove Theorem \ref{lof}.
First, we need to compute some Chern numbers of smooth prime Fano threefolds of genus~$10$.

\begin{lemma}
\label{lcn}
    Suppose that $X$ is a smooth prime Fano threefold of genus $g=10$. Then $$c_1(X)\cdot (-K_X)^2=18,\quad c_2(X)\cdot (-K_X)=24\quad \text{and}\quad c_3(X)=0.$$
\end{lemma}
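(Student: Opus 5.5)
We compute the three numbers by the standard invariants of Fano threefolds, each from a separate piece of information.

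The first number is immediate from the definition of the genus. Since $c_1(X)=-K_X$, we have
$$c_1(X)\cdot(-K_X)^2=(-K_X)^3=2g-2=18 .$$

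For the second number we use Riemann--Roch together with Kodaira vanishing. Kodaira vanishing gives $h^i(X,\OOO_X)=0$ for $i>0$, so $\chi(\OOO_X)=1$. Hirzebruch--Riemann--Roch gives $\chi(\OOO_X)=\frac{1}{24}c_1c_2$, hence
$$c_2(X)\cdot(-K_X)=24 .$$

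For the third number, $c_3(X)=e(X)=2+2b_2+b_3\cdot(-1)+\dots$. More precisely, since $h^{1,0}=h^{2,0}=h^{3,0}=0$ and $b_1=0$, we have
$$e(X)=2+2b_2(X)-b_3(X), \qquad b_3=2h^{1,2}.$$
Here $b_2=1$, because $X$ is prime. It therefore suffices to show $h^{1,2}(X)=2$; then $e(X)=2+2-4=0$. I would obtain $h^{1,2}=2$ from the Sarkisov link of Theorem~\ref{ic}, computing the Euler characteristic on both sides. This is the only step needing real input.

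\begin{itemize}
\item Blowing up the line $L\cong\PP^1$ gives $e(\widehat X)=e(X)+e(\PP^1)=e(X)+2$.
\item Each Atiyah--Kulikov flop replaces one $\PP^1$ by another, and flops in general preserve Betti numbers, so $e(\widetilde X)=e(\widehat X)$.
\item $\widetilde X$ is the blow-up of the smooth quadric $Q$ along the curve $\Gamma$ of genus $2$, so
$$e(\widetilde X)=e(Q)+e(\Gamma)\cdot(2-1)=4+(2-2\cdot2)=2 .$$
\end{itemize}

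Combining these, $e(X)=2-2=0$. Equivalently, the intermediate Jacobian of $X$ is the Jacobian of $\Gamma$, so $h^{1,2}=2$.

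The main obstacle is justifying that the flop preserves the topological Euler characteristic. For Atiyah--Kulikov flops this is clear, since one exchanges a curve $\PP^1$ for a curve $\PP^1$ with isomorphic complements. In general one cites that flops between smooth projective threefolds preserve Betti numbers (Kollár), or that $K$-equivalent varieties have equal Hodge numbers (Batyrev). Alternatively, one can bypass the link and cite the known value $h^{1,2}(X_{18})=2$ from Iskovskikh's classification.
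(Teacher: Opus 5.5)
Your proof is correct. You agree with the paper on $c_1(X)\cdot(-K_X)^2=(-K_X)^3=18$, but you reach the other two numbers by different routes.

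For $c_2(X)\cdot(-K_X)$, the paper restricts to a smooth hyperplane section $Y$, which is a K3 surface, and reads off $c_2(X)\cdot H=c_2(Y)+c_1(Y)\cdot H=24$ from $0\to\operatorname{T}Y\to\operatorname{T}X|_Y\to\OOO_Y(H)\to 0$. You instead use Kodaira vanishing and Hirzebruch--Riemann--Roch, $1=\chi(\OOO_X)=\frac{1}{24}c_1c_2$. Both are standard and valid for every smooth Fano threefold; yours needs no smooth anticanonical member, and the paper's needs no Todd class.

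For $c_3(X)$, the paper simply quotes $e(X)=0$ (i.e.\ $h^{1,2}=2$) from the tables in \cite[\S 12.2]{isk2}, which is your fallback. Your main route is the link of Theorem~\ref{ic}:
\[
e(X)+2=e(\widehat X)=e(\widetilde X)=e(Q)+e(\Gamma)=2 .
\]
The arithmetic is right, and it is a nice derivation of $h^{1,2}=2$ from the paper's own link rather than from the classification. It is not cheaper, though. For special $X$ and $L$, for instance when $\NNN_{L/X}\cong\OOO_{\PP^1}(1)\oplus\OOO_{\PP^1}(-2)$ and the exceptional section of $\widehat F$ is also flopped, the flop need not be a disjoint union of Atiyah--Kulikov flops. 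So you genuinely need invariance of the Euler characteristic under arbitrary flops of smooth threefolds. You can get this from the $K$-equivalence results you cite. Alternatively, argue directly: $e$ is additive, the complements of the exceptional loci are isomorphic, and over each point the flopping and flopped fibres are trees of smooth rational curves with the same number of components, since both ranks equal $\operatorname{rank}H_2$ of the common punctured neighbourhood. Either way, that is a heavier input than the single citation it replaces.
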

\begin{proof}
    Let $H$ denote $-K_X$. Since $c_1(X)\cdot H^2=H^3$, it follows that $c_1(X)\cdot H^2=18$.
    Since the top Chern class of the tangent bundle is equal to the topological Euler characteristic, we have $c_3(X)=0$ (see e.g. \cite[\S 12.2]{isk2}).
    
    Next, let $Y$ be a general hyperplane section of $X$, then $Y$ is a $K3$ surface, therefore $c_1(Y)=0$, $\NNN_{Y/X}\cong\OOO_Y(H)$ and $c_{2}(Y)=\chi_{\mathrm{top}}(Y)=24$. Consider the following short exact sequence: 
    $$0\longrightarrow\operatorname{T}Y\longrightarrow\operatorname{T}X|_Y\longrightarrow\NNN_{Y/X}\longrightarrow0,$$ therefore $c(Y)\cdot (1+tH)=c(X)|_Y$, thus, $c_2(X)\cdot H=24$.  
\end{proof}

\begin{proposition}[Theorem \ref{lof}]
\label{thp2}
    Suppose that $X$ is a smooth prime Fano threefold of genus $g=10$. Then there is a point $P$ on $X$ such that the length of the Hilbert scheme $\mathrm{F}_1(X,P)$ is equal to $3$. In particular, if $\mathrm{F}_1(X)$ is smooth, then there are three distinct lines through $P$.
\end{proposition}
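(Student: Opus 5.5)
Since $X=V_{18}\cap\PP^{11}$ with $V_{18}=\G_2/P_2\subset\PP^{13}$, I will reduce the statement to a degeneracy-locus computation on $X$ that uses the Chern numbers of Lemma \ref{lcn}. For a point $P\in V_{18}$, let $\mathcal{L}_P$ be the union of lines on $V_{18}$ through $P$. It is known that $\mathcal{L}_P$ is a cone over a twisted cubic: the lines through $P$ are parametrised by a twisted cubic in $\PP(T_PV_{18})$, so $\langle\mathcal{L}_P\rangle\cong\PP^4$ and $\mathcal{L}_P=\langle\mathcal{L}_P\rangle\cap V_{18}$. For $P\in X$, the scheme $\mathrm{F}_1(X,P)$ is the intersection of this twisted cubic with the linear subspace cut out by the two linear equations defining $\PP^{11}$, restricted to the tangent directions at $P$. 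This intersection is a linear section of a twisted cubic in $\PP^3$ by $\PP(T_PX)\cong\PP^2$, which has length $3$ if it is finite. By the Remark following Theorem \ref{lof}, the length never exceeds $3$, so the section is always finite. Hence we only need a point $P$ where this linear section is actually a plane section, that is, where the $3$-dimensional projective tangent space $\mathbb{T}_PX$ meets $\langle\mathcal{L}_P\rangle$ in a $\PP^3$ rather than a $\PP^2$. Equivalently, we need $\dim(\langle\mathcal{L}_P\rangle\cap\PP^{11})=3$.

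To produce such a point, I globalise. On $V_{18}$ there is a rank-$5$ bundle $\mathcal{E}$ whose fibre at $P$ is the affine cone over $\langle\mathcal{L}_P\rangle$; it is a homogeneous bundle, namely an extension of $\OOO(-1)$ by a twist of the rank-$4$ bundle containing $T$. Restrict $\mathcal{E}$ to $X$. The two linear forms vanishing on $\PP^{11}$ give a map $\mathcal{E}|_X\to\OOO_X(1)^{\oplus 2}$, which vanishes on the line $\OOO(-1)\subset\mathcal{E}$ and on the part of the tangent directions lying in $T_PX$. The condition we want is that this map drops rank, i.e. has rank $\leqslant 1$ at $P$. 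Better, I factor it through the quotient $\mathcal{E}|_X/\widehat{T}_PX$ of rank $5-4=1$, where $\widehat{T}_PX$ is the affine cone over $\mathbb{T}_PX$. Then I need a point where a map from a line bundle $M$ to $\OOO_X(1)^{\oplus2}$ vanishes. This is the zero locus of a section of the rank-$2$ bundle $M^{*}(1)^{\oplus 2}$, which is degree-$2$ data on a threefold. A section of a rank-$2$ bundle on a threefold vanishes in expected dimension $1$, which is more than we need.

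So I will pin down the expected codimension carefully. The normal-direction space $\langle\mathcal{L}_P\rangle/\mathbb{T}_PX$ is $1$-dimensional, while the tangent part already lies in $\PP^{11}$. Hence generically $\dim(\langle\mathcal{L}_P\rangle\cap\PP^{11})=3$ exactly when $\langle\mathcal{L}_P\rangle\subset\mathbb{T}_PX$ modulo $\PP^{11}$, and the computation has to be set up with the right rank. If the honest condition is codimension $3$, I will compute the degeneracy class as a degree-$3$ Chern class of an appropriate bundle on $X$. That class can be expressed through $c_1(X)H^2=18$, $c_2(X)H=24$, $c_3(X)=0$ and $H^3=18$ using the exact sequences $0\to T_X\to T_{V}|_X\to\OOO(1)^{\oplus2}\to0$ and the homogeneous description of $\mathcal{E}$. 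Lemma \ref{lcn} was clearly prepared for this purpose.

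If that number is positive, the degeneracy locus is nonempty, since it is not of excess dimension. Indeed, if it were empty the top Chern class would vanish, so a positive count forces a point $P$. At that point $\mathrm{F}_1(X,P)$ has length exactly $3$, by the upper bound in the Remark and the cubic-section argument. If $\mathrm{F}_1(X)$ is smooth, then $\mathrm{F}_1(X,P)$ is reduced, since lines through $P$ counted with multiplicity would force a singular point of $\mathrm{F}_1(X)$ or non-reducedness of $\mathrm{F}_1(X,P)$. This gives three distinct lines, using Theorem \ref{thl}(3) and the unramifiedness of $U\to X$ as in the proof of Proposition \ref{pl0}. The main obstacle is identifying the correct bundle and its Chern classes on $V_{18}$ restricted to $X$, and checking that the resulting number is nonzero. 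I would first try to express it as $c_3$ of $(T_V|_X/T_X)\otimes(\ldots)$, i.e. of $\OOO(1)^{\oplus2}$ twisted by the line bundle $\langle\mathcal{L}_P\rangle/\mathbb{T}_P$.
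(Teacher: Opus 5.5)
Your reduction matches the paper's. It suffices to find $P\in X$ with $\mathbb{T}_PX\subset\langle\mathcal{L}_P\rangle$, i.e.\ $T_PX\subset\mathcal{E}_P$, where $\mathcal{E}\subset TV_{18}$ is the rank-$4$ contact distribution. At such a point $\mathrm{F}_1(X,P)$ is a plane section of the non-degenerate twisted cubic in $\PP(\mathcal{E}_P)$, so it has length $3$. When $\mathrm{F}_1(X)$ is smooth, reducedness follows from unramifiedness of $U\to X$.

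There is, however, a genuine gap. The step that actually produces $P$ is never carried out: you need a bundle whose top Chern class detects these points, and a check that this class is nonzero. You leave this as ``the main obstacle'', and the constructions you do suggest fail.

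\begin{itemize}
\item Write $\widehat{\mathcal{E}}$ for your rank-$5$ bundle, reserving $\mathcal{E}$ for the contact distribution. Its quotient by $\widehat{T}_PX$ ``of rank $5-4=1$'' presupposes $\widehat{T}_PX\subset\widehat{\mathcal{E}}_P$ at every point. That is exactly the condition you are looking for, and it fails at a general point, where $\mathbb{T}_PX\cap\langle\mathcal{L}_P\rangle$ is only a plane, as you note yourself. So the line bundle $M$ and the rank-$2$ bundle with one-dimensional zero locus do not exist.
\item Your final candidate, $c_3$ of $\OOO_X(1)^{\oplus 2}$ twisted by a line bundle, is the third Chern class of a rank-$2$ bundle. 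It is identically zero, so it can never force a point to exist.
\end{itemize}

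The missing idea is to use the contact subbundle $\mathcal{E}\subset TV_{18}$ directly, with $TV_{18}/\mathcal{E}\cong\OOO(1)$. The points you want are exactly the zeros of the composite $TX\to TV_{18}|_X\to\OOO_X(1)$, which is a section of the rank-$3$ bundle $\Omega_X(1)$. Suppose it vanished nowhere. Then $\OOO_X(-1)\subset\Omega_X$ would be a subbundle with a rank-$2$ quotient $\mathcal{V}$, forcing $c_3(\mathcal{V})=0$. But by Lemma \ref{lcn},
\[
c_3(\mathcal{V})=H^3-c_1(X)H^2+c_2(X)H-c_3(X)=18-18+24-0=24 .
\]

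Your first formulation was in fact on the right track. The map $\widehat{\mathcal{E}}|_X\to\OOO_X(1)^{\oplus 2}$ kills $\OOO(-1)$, so up to a twist it descends to $\mathcal{E}|_X\to\NNN_{X/V_{18}}\cong\OOO_X(1)^{\oplus 2}$. This is a map from a rank-$4$ to a rank-$2$ bundle, and its rank-$\leqslant 1$ locus has expected codimension $(4-1)(2-1)=3$. Its Porteous class is $c_3(\Omega_X-\OOO_X(-1))=c_3(\Omega_X(1))$, which is the same number $24$. Applying Porteous there would have finished the argument; the detour through a nonexistent rank-$1$ quotient is what derailed it.
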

\begin{proof}
    Let $V_{18}=\G_2/P_2\subset\PP^{13}$ (see Theorem \ref{muk}) and let $\operatorname{Ad:\G_2\to\GL(\mathfrak{g}_2)}$ be the adjoint representation of $\G_2$. We see that the restriction of 
$\operatorname{Ad}$ to $P_2$ naturally induces the $5$-dimensional representation $\rho:P_2\to\GL(\mathfrak{g}_2/\mathfrak{p}_2)$ and the associated homogeneous bundle 
$\G_2\times_{P_2}(\mathfrak{g}_2/\mathfrak{p}_2)$ is isomorphic to the tangent bundle $\operatorname{T}V_{18}$. Note that the representation $\rho$ has a unique irreducible $4$-dimensional subrepresentation. Therefore, its associated vector bundle is the unique homogeneous subbundle $\mathcal{E}$ of the tangent bundle 
$\operatorname{T}V_{18}$ of rank $4$ on $V_{18}$. Note that $\mathcal{E}$ is a contact structure on $V_{18}$. Then the homogeneous line bundle $\operatorname{T}V_{18}/\mathcal{E}$ is $\OOO(1)$.
Since $X=V_{18}\cap\PP^{11}\subset\PP^{13}$, it follows that there are the following two short exact sequences of vector bundles: 
    \begin{eqnarray*}
&&0\longrightarrow\operatorname{T}X\overset{f}{\longrightarrow}\operatorname{T}V_{18}|_X\longrightarrow \NNN_{X/V_{18}}\longrightarrow 0,  
\\
&&0\longrightarrow\mathcal{E}|_X\longrightarrow\operatorname{T}V_{18}|_X\overset{g}\longrightarrow \OOO_X(1)\longrightarrow 0.
\end{eqnarray*}

    Note that the union of the lines $\mathcal{L}_P\subset T_PV_{18}$ through any point $P\in V_{18}$ on $V_{18}$ is the cone over a twisted cubic curve with the vertex $P$ (see \cite[Proposition 1]{hwa}), where $T_PV_{18}$ is the embedded tangent space at the point $P$ of the variety $V_{18}$. Since the action of $P_2$ on $V_{18}$ preserves the linear spans $\langle\mathcal{L}_P\rangle$ and $\dim\langle\mathcal{L}_P\rangle\,=4$, it follows that $\mathcal{E}_P\subset \operatorname{T}_PV_{18}$ naturally corresponds to $\langle\mathcal{L}_P\rangle\,\subset T_PV_{18}$.
     
     If there is a point $Q\in X$ such that $$\dim(\operatorname{T}_QX\cap\mathcal{E}_Q)=3,$$ then the length of the Hilbert scheme $\mathrm{F}_1(X,Q)$ is equal to $3$. This implies that to prove that there are three lines (some of them may coincide) having a common point, it is sufficient to show that there is a point such that the morphism $g\circ f:\operatorname{T}X\to \OOO_X(1)$ vanishes. Assume the converse that there is no such point. Then $\mathcal{V}=\operatorname{T}^*X/\OOO_X(-1)$ is a vector bundle of rank $2$, and hence $c_3(\mathcal{V})=0$. On the other hand, using Lemma \ref{lcn}, we have 
     \begin{eqnarray*}
     c_3(\mathcal{V})=\{(1-c_1(X)+c_2(X)-c_3(X))/(1-H)\}_3&=&\\
     H^3-c_1(X)\cdot H^2+c_2(X)\cdot H-c_3(X)=c_2(X)\cdot H&=&24\neq0,
     \end{eqnarray*}
     a contradiction.
     
     Moreover, if $\mathrm{F}_1(X)$ is smooth, then the corresponding lines must be distinct (see \cite[Lemma 2.1.1]{kps} and Theorem \ref{thl}). This
completes the proof of the proposition.
\end{proof}

We provide the proof of Theorem \ref{mt}:

\begin{proof}[Proof of Theorem \ref{mt}]
     It follows from Theorem \ref{thl}, Corollary \ref{mp}, Proposition \ref{pl0} and Proposition \ref{thp2} that there is a line $L$ on $X$ such that the corresponding surface $F_L$ is singular. By Corollary \ref{cc} we have $$X\setminus D\simeq W\setminus F_L,$$ so the variety $X$ is cylindrical if the variety $W\setminus F_L$ is. We get from Proposition \ref{ccc} and Proposition \ref{l2} that the variety $W\setminus F_L$ is cylindrical. Thus, $X$ is cylindrical.
\end{proof}

\end{document}